\newcommand\bes{\begin{eqnarray}}
\newcommand\ees{\end{eqnarray}}
\newtheorem{theorem}{Theorem}[section]
\newtheorem{lemma}[theorem]{Lemma}
\newtheorem{corollary}[theorem]{Corollary}
\newtheorem{definition}[theorem]{Definition}
\newtheorem{remark}[theorem]{Remark}
\newtheorem{proposition}[theorem]{Proposition}
\numberwithin{equation}{section}
\begin{document}

\title[Propagation Phenomena for Nonlocal Dispersal Equations]{\textbf{Propagation Phenomena for Nonlocal Dispersal Equations in Exterior Domains}}

\author[Qiao, Li and Sun]{Shao-Xia Qiao, Wan-Tong Li$^{*}$~ and Jian-Wen Sun}
\thanks{\hspace{-.6cm}
School of Mathematics and Statistics, Lanzhou University, Lanzhou, Gansu, 730000, P.R. China.\\
$^*${\sf Corresponding author} (wtli@lzu.edu.cn)\\
$^+${\sf Submitted to JDDE on August 7, 2019}}
\date{\today}

\begin{abstract}

This paper is concerned with the spatial propagation of nonlocal dispersal equations with bistable or multistable nonlinearity in exterior domains. We obtain the existence and uniqueness of an entire solution which behaves like a planar wave front as time goes to negative infinity. In particular, some disturbances on the profile of the entire solution happen as the entire solution comes to the interior domain. But the disturbances disappear as the entire solution is far away from the interior domain. Furthermore, we prove that the solution can gradually recover its planar wave profile and continue to propagate in the same direction as time goes to positive infinity for compact convex interior domain. Our work generalizes the local (Laplace) diffusion results obtained by Berestycki et al. (2009) to the nonlocal dispersal setting by using new known Liouville results and Lipschitz continuity of entire solutions due to Li et al. (2010).

\textbf{Keywords}: Entire solution, Nonlocal dispersal, Exterior domain, Maximum principle.

\textbf{AMS Subject Classification (2010)}: 35K57, 35R20, 92D25
\end{abstract}

\maketitle

\section{Introduction }
\noindent

This paper is concerned with the nonlocal dispersal problem
\begin{equation}\label{1-1}
u_t(x,t)=\int_{\mathbb{R}^N\backslash K}J(x-y)[u(y,t)-u(x,t)]dy+f(u),~ x\in\Omega=\mathbb{R}^N\backslash K,
\end{equation}
where $K$ is a compact subset of $\mathbb{R}^N$ and $f$ is a bistable or multistable type nonlinearity. Throughout the paper, we assume that $f$ and $J$ satisfy the following assumptions.
\begin{itemize}
\item[(F)] $f\in C^{1,1}([0,1])$ such that
\begin{equation*}
f(0)=f(1)=0,~f'(0)<0, ~f'(1)<0,~f'(s)<\inf\limits_{x\in\Omega}\int_{\Omega}J(x-y)dy<1,~\text{and}~\int_0^1f(s)ds>0.
\end{equation*}
\item[(J)] The kernel function $J\in C^1(\mathbb{R}^N)$ is radial symmetry and compactly supported such that
    \[J(x)\geq0~\text{for}~x\in\mathbb{R}^N,~J(0)>0~\text{and}~\int_{\mathbb{R}^N}J(y)dy=1.\]
\end{itemize}

It is well-known that the local dispersal problem in exterior domain is well established by Berestycki et al. \cite{BHM}. In order to study how a planar wave front propagates around an obstacle, they considered the following semi-linear
parabolic problem
\begin{equation}\label{1-2}\left\{
\begin{aligned}
&u_t=\Delta u+f(u),~ x\in\Omega,\\
&\nu\cdot\nabla u=0,~x\in\partial\Omega,
\end{aligned}\right.
\end{equation}
where $\nu$ denotes the outward unit normal to the smooth exterior domain $\Omega$. Specially, they showed that
 after go through the obstacle $K$,
whether an entire solution, behaving like a planar wave front approaching from infinity, can recover its planar front profile uniformly in space depends on the shape of $K$.
Recently, Hoffman et al. \cite{HHV} considered a similar problem for two dimensional lattice differential equations with directionally convex obstacles.

In view of the extensive use of the nonlocal operators to describe the the diffusion phenomenon in biology, physics and chemistry, so much attention has been payed to the study of nonlocal dispersal equations \cite{F2010,PB1997,CC2004,CCR2006,CXF1997,Co2007,CDM2008}. Specially,
the study of nonlocal dispersal equations in exterior domains has attracted much attention recently.
In particular, Cort$\acute{a}$zar et al. \cite{c2012,c20161,c20162} considered the asymptotic behavior for the linear problem. In \cite{BCHV,BC}, Brasseur et al. have established some Liouville results for such nonlocal obstacle problems.
Particularly, they found that the stationary solutions of \eqref{1-1} converging to $1$ as $|x|\rightarrow+\infty$ is indeed $1$ for compact convex obstacle $K$.

When the obstacle $K$ is empty, there have been quite lots of works devoted to the traveling wave solutions and entire solutions for \eqref{1-1} in the recent decade years  \cite{PB1997,CC2004,CP2005,Cha2005,CXF1997,Co2007,CDM2008,KPP,CJ,Wen1982,YH20091
,YH2009,Pan2010,Wang2006,ZLL2009,Guo2005,Hamel1999,Hamel2001,L2010,SZLW2019,ZLW2017,ZLWS2019}.
It follows from (F) and (J) that there exists a solution $\phi$ to the nonlocal wave equation
\begin{equation}\label{1-3}\left\{\begin{aligned}
&\int_{\mathbb{R}}J_1(z-y)[\phi(y)-\phi(z)]dy-c\phi'(z)+f(\phi(z))=0,~z\in\mathbb{R},\\
&\phi(-\infty)=0,~\phi(+\infty)=1,\\
&0<\phi(z)<1,~z\in\mathbb{R}
\end{aligned}\right.
\end{equation}
with $c>0$, where $J_1=\int_{\mathbb{R}^{N-1}}J(x_1,y_1,y_2,...y_N)dy$. In fact, $\phi$ is the traveling wave solution of nonlocal type equation \eqref{1-1} when $K$ is empty, see \cite{PB1997,Sun2011}.
However, if the domain is not the whole space (such as \eqref{1-1}, \eqref{1-2}), there is no classical traveling wave front. Therefore, it is naturally to consider the generalization of traveling fronts.  In fact, such extensions have been introduced in \cite{HM2002,Shen2004,BH2007,BH2012}. Meanwhile, the transition wave front introduced by Berestycki and Hamel in \cite{BH2007,BH2012}, as a fully general notion of traveling front, has been widely established in many works \cite{Bu2018,Hamel20161,Hamel20162,Mellet2009,Nolen2012,Shen2011,Shen20173,Shen20174,Sheng2018,
Zlatos2012,Zlatos2013,Zlatos2017}.
It is interesting to point out that the entire solution constructed in \cite{BHM,HHV} is indeed a generalized transition front.

In the present paper, we are interested to consider the nonlocal dispersal problem \eqref{1-1} in exterior domains.
The main ingredient of this paper is to obtain a unique entire solution of \eqref{1-1} which behaves as planar wave fronts as time to infinity. More precisely, we first prove the existence and uniqueness of the entire solution like a planar wave front at negative infinity time by sub- and super-solutions method.
Moreover, we find  that the entire solution also approaches planar wave fronts as $x$ is far away from  $K$.
 Finally, we shall investigate the procedure how the front goes through $K$ and eventually recovers its shape.
 The investigation of \eqref{1-1} is different to \eqref{1-2} due to the lack of compactness of the nonlocal  operators. Therefore, additional difficulties appear in the study of entire solutions.
  Indeed, the compactness of the solution plays an important role in constructing the uniqueness entire solution and in study of the asymptotic behaviors as $|x|\rightarrow+\infty$. So motivated by the recent work of Li et al. \cite{L2010}, we establish the Lipschitz continuity in space variable $x$ of entire solutions of the nonlocal problem \eqref{1-1} in exterior domains. Then we can discuss the uniqueness and asymptotic behaviors of entire solutions of \eqref{1-1}. Meanwhile, the appearance of convolution term and interior domain leads to the planar wave fronts are not the solutions of \eqref{1-1}, which causes much trouble in verifying the sub- and super-solutions to construct the entire solution. Particularly, the nonlocal dispersal equations admit no explicit fundamental solutions as Laplacian dispersal equation. Therefore, the sub- and super-solutions in \cite{BHM,HHV} are not suitable here to study the asymptotic behaviors of such an entire solution as time goes positive infinity.  Consequently, we have to construct new sub- and super-solutions inspired by \cite{BHM,HHV} to investigate the asymptotic behaviors of such an entire solution as time goes positive infinity for the nonlocal dispersal equation \eqref{1-1}. At last,
our results show that the geometric shape of the interior domain affects the propagation of planar wave fronts.

Now we are ready to state the main result of this paper.

\begin{theorem}\label{th1-1}
Assume that $(F)$ and $(J)$ hold. Let $\phi$ be the unique solution of \eqref{1-3} with $c>0$. If $K$ is convex, then there exists a unique entire solution $u(x,t)$ to \eqref{1-1} satisfying $0<u(x,t)<1$ and $u_t(x,t)>0$ for all $(x,t)\in\overline\Omega\times\mathbb{R}$. Moreover, we have
\[u(x,t)-\phi(x_1+ct)\rightarrow0\]
as $t\rightarrow-\infty$ uniformly in $x\in\overline\Omega$, and as $|x|\rightarrow+\infty$ uniformly in $t\in\mathbb{R}$, where $\phi(x_1+ct)$ is the traveling wave solution satisfying \eqref{1-3}. In addition, if there only exits one zero point of $f$ in $(0,1)$, then
\[u(x,t)-\phi(x_1+ct)\rightarrow0~\text{as}~t\rightarrow+\infty~\text{uniformly in}~x\in\overline{\Omega}.\]
\end{theorem}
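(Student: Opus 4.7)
The plan is to construct the entire solution by the classical sub- and super-solution/limit procedure adapted to the exterior nonlocal setting, and then peel off the three convergence statements one by one. First I would build functions $\overline{u}(x,t)$ and $\underline{u}(x,t)$ that bracket a planar front plus an exponentially decaying correction, solve \eqref{1-1} on $\Omega\times(-n,+\infty)$ with initial datum $\phi(x_1-cn)$ (or $\underline{u}(x,-n)$), and obtain a monotone, bounded sequence $u_n$ that is nondecreasing in $n$ by the comparison principle. Passing $n\to+\infty$ gives a candidate entire solution $u$; monotonicity in $n$ combined with the parabolic structure forces $u_t\ge 0$, while the strong maximum principle for the nonlocal operator upgrades this to $u_t>0$ and $0<u<1$.

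\textbf{The sub/super-solutions.} The key technical point is that $\phi(x_1+ct)$ is not a solution of \eqref{1-1}, since the convolution in \eqref{1-3} is over $\mathbb{R}^N$ but the convolution in \eqref{1-1} omits the region $K$. Thus plugging $\phi(x_1+ct)$ into the operator in \eqref{1-1} produces a residual
\[R(x,t)=-\int_{K}J(x-y)\bigl[\phi(y_1+ct)-\phi(x_1+ct)\bigr]\,dy,\]
which is uniformly bounded, supported in a bounded neighborhood of $K$ (since $J$ is compactly supported), and is $O(e^{ct\lambda})$ as $t\to-\infty$ because $\phi$ itself is exponentially close to $0$ on the moving left tail. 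I would then take
\[\overline{u}(x,t)=\min\bigl\{\phi(x_1+ct)+\delta e^{\beta t},\,1\bigr\},\qquad \underline{u}(x,t)=\max\bigl\{\phi(x_1+ct)-\delta e^{\beta t},\,0\bigr\},\]
choosing $\beta>0$ small enough (depending on $f'(0),f'(1)$ and the exponential decay rate of $\phi$ at $-\infty$) so that the correction absorbs $R$ while exploiting the sign of $f'$ near the equilibria, and $\delta$ small. Verifying that these functions are genuine sub- and super-solutions is the first main obstacle, because, unlike the local PDE case, we cannot use explicit kernels; the argument is inspired by \cite{BHM,HHV} but must be tailored to the nonlocal operator as the authors already flag. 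Once these barriers are in place, sandwiching the approximants $u_n$ between $\underline{u}$ and $\overline{u}$ yields both existence and the asymptotic $u(x,t)-\phi(x_1+ct)\to 0$ as $t\to-\infty$ uniformly in $x\in\overline\Omega$.

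\textbf{Uniqueness and behavior as $|x|\to+\infty$.} For uniqueness I would let $v$ be any other entire solution with the required limit at $t=-\infty$, note that $\overline{u}_\varepsilon:=u(x,t+\tau_\varepsilon)+\varepsilon$ and the analogous lower barrier trap $v$ for $\tau_\varepsilon$ large negative, and let $\varepsilon\downarrow 0$ after invoking the comparison principle; the Lipschitz continuity in $x$ established following \cite{L2010} provides the compactness needed to justify the translation arguments. The spatial asymptotic as $|x|\to+\infty$ uniformly in $t$ follows the same way: residual $R$ vanishes once $x$ is outside $\mathrm{supp}\,J+K$, and by a translation-compactness argument along any sequence $|x_n|\to\infty$ with $t_n$ bounded one extracts a limit that solves the whole-space equation with $\phi$ as a sub/super-barrier, hence equals $\phi(x_1+ct)$ by uniqueness of the one-dimensional front.

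\textbf{Recovery as $t\to+\infty$ for convex $K$.} Because $u_t>0$ and $u<1$, the pointwise limit $U(x):=\lim_{t\to+\infty}u(x,t)$ exists and, by the Lipschitz continuity in $x$ plus Lebesgue dominated convergence inside the convolution, $U$ is a bounded stationary solution of \eqref{1-1} on $\overline\Omega$. The previous step gives $U(x)\to 1$ as $|x|\to+\infty$. This is exactly the setup of the Liouville results of Brasseur et al.\ \cite{BCHV,BC} for convex obstacles (applicable since $f$ has a single interior zero), and these force $U\equiv 1$ on $\overline\Omega$. Dini's theorem then upgrades the monotone pointwise convergence $u(\cdot,t)\uparrow 1$ to local uniform convergence, and combining this with uniform bounds on $\phi$ and on $u$ together with the spatial asymptotic in $|x|$ allows one to split $\overline\Omega$ into a large ball (where $1-u$ and $1-\phi(x_1+ct)$ are both small for large $t$) and its complement (where the spatial asymptotic controls the difference uniformly in $t$), yielding $u(x,t)-\phi(x_1+ct)\to 0$ uniformly in $x\in\overline\Omega$ as $t\to+\infty$. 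The main obstacle throughout will be the $t\to+\infty$ step: constructing sub/super-solutions that allow the front to survive its passage through $K$ without a heat-kernel-style smoothing, which is precisely where the nonlocal analysis departs from \cite{BHM,HHV}.
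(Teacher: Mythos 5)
Your overall architecture (sub/super-solutions, monotone limit, comparison-based uniqueness, translation-compactness for $|x|\to\infty$, Liouville for $t\to+\infty$) matches the paper's, but two of your key steps have genuine gaps. First, the barriers $\overline{u}=\min\{\phi(x_1+ct)+\delta e^{\beta t},1\}$, $\underline{u}=\max\{\phi(x_1+ct)-\delta e^{\beta t},0\}$ do not satisfy the required differential inequalities. Writing $q(t)=\delta e^{\beta t}$, one finds $\mathcal{L}\overline{u}=q'-f'(\phi+\theta q)\,q-R=(\beta-f'(\phi+\theta q))q-R$. In the transition region where $\phi\in[\eta,1-\eta]$ the residual $R$ vanishes (that region is far from $K$ for $t\ll-1$), but $f'$ may be positive there, so you need $\beta\geq\sup_{(0,1)}f'$; near $K$ you need $q$ to absorb $|R|\leq Ce^{\lambda ct}$, which forces $\beta<\lambda c$. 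Nothing in (F)--(J) guarantees $\sup f'<\lambda c$, so in general no admissible $\beta$ exists. This is exactly why the paper's $W^{\pm}$ carry a spatial shift $\xi(t)$ (and a reflected front $\phi(\mp x_1+ct\mp\xi)$): the term $\dot\xi\,\phi'$ is bounded below in the transition region and compensates the bad sign of $f'$ there, in the Fife--McLeod manner. Your construction is repairable by adding such a shift, but as written it fails.

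Second, your $t\to+\infty$ argument is circular. Dini plus the Liouville theorem of Brasseur et al.\ only gives $u(\cdot,t)\uparrow 1$ \emph{locally} uniformly. Your splitting of $\overline{\Omega}$ into a large ball and its complement then invokes the spatial asymptotic $|u-\phi|\to0$ as $|x|\to+\infty$ \emph{uniformly in $t\in\mathbb{R}$}; but that uniformity (in particular for $t\to+\infty$, in the moving region $x_1\approx-ct$ where $\phi$ transitions) is part of the conclusion being proved, and the intermediate result available at that stage (the analogue of the paper's Theorem 4.3) holds only locally uniformly in $t$, i.e.\ on $(-\infty,\tau]$. The actual content of the recovery step is to control the front's profile and position in the wake of the obstacle for all large times, and this is where the paper must build the elaborate barriers $u^{\pm}=\phi\bigl(x_1+c(t-1+t_\epsilon)\mp\theta(x',t)\mp Z(t)\bigr)\mp z(t)$ with the auxiliary function $z(t)$ of Hoffman et al.; no analogue appears in your proposal. (A smaller point: monotonicity of $u_n$ in $n$ does not by itself give $u_t\geq0$; one needs, as in the paper, that the initial datum is a sub-solution with $\partial_tW^-\geq0$, which uses $\phi''\geq0$ on $(-\infty,0]$.)
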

\begin{remark}{\rm
It follows from \cite[Theorem 1.6]{BHM} that the solution $u(x,t)$ constructed in Theorem \ref{th1-1} is a general transition solution to \eqref{1-1}. We conjecture that there exists a unique global mean speed $c_t$ of the transition front solution constructed in this paper, which will be remained for further study.
}
\end{remark}

In fact, Theorem \ref{1-1} is also true for some small perturbations of convex interior domains $K$ by \cite{BCHV}, under some additional conditions.
For reader's convenience, we state some definitions before the conclusions as follows.

\begin{definition}
Let $\alpha\in(0,1]$, let $K\subset\mathbb{R}^N$ be a compact convex with nonempty interior ($\partial K$ is then automatically of class $C^{0,\alpha}$) and let $(K_\epsilon)_{0<\epsilon\leq1}\subset\mathbb{R}^N$ be a family of compact, simply connected sets having $C^{0,\alpha}$ boundary. We say that $(K_\epsilon)_{0<\epsilon\leq1}$ is a family of $C^{0,\alpha}$ deformations of $K$ if the following conditions are fulfilled:
\begin{itemize}
\item[(i)] $K\subset K_{\epsilon_1}\subset K_{\epsilon_2}$ for all $0<\epsilon_1\leq\epsilon_2\leq1$;

\item[(ii)] $K_\epsilon\rightarrow K$ as $\epsilon\rightarrow0$ in $C^{0,\alpha}$, in the sense that there exist $r>0$, $p\in\mathbb{N}$, $p$ rotations $(R_i)_{1\leq i\leq p}$ of $\mathbb{R}^N$, $p$ points $(z_i)_{1\leq i\leq p}$ of $\partial K$ and $p$ functions $(\psi_{i})_{1\leq i\leq p}$ and $p$ families of functions $(\psi_{i,\epsilon})_{1\leq i\leq p,~0<\epsilon\leq1}$ of class $C^{0,\alpha}\left(B_r^{N-1}\right)$ describing $\partial K$ and $\partial K_\epsilon$ and such that
\[\|\psi_i-\psi_{i,\epsilon}\|_{C^{0,\alpha}
\left(B_r^{N-1}\right)}~\text{as}~\epsilon\rightarrow0,~
\text{for every}~1\leq i\leq p.\]

\end{itemize}
\end{definition}

The conclusion for the perturbations of compact convex $K$ is as follows.
\begin{theorem}\label{th1-4}
Under the settings of Theorem \ref{th1-1}, let $\alpha\in(0,1]$ and $K\subset\mathbb{R}^N$ be a compact convex set with non-empty interior and let $\{K_\epsilon\}_{0<\epsilon\leq1}$ be a family of $C^{0,\alpha}$ deformations of $K$. Assume that
\[\max\limits_{s\in[0,1]}f'(s)<\inf\limits_{0<\epsilon\leq1}\inf\limits_{x\in\mathbb{R}^N\setminus K_\epsilon}
\|J(x-\cdot)\|_{L^1(\mathbb{R}^N\setminus K_\epsilon)}.\]
Then there exists $\epsilon_0\in(0,1]$ such that Theorem \ref{th1-1} also hold with $K$ replaced by $K_\epsilon$  for $\epsilon\in(0,\epsilon_0]$.
\end{theorem}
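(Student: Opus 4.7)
The plan is to run the entire proof of Theorem \ref{th1-1} with the obstacle $K$ replaced by $K_\epsilon$, and to verify that each ingredient remains available uniformly for $\epsilon\in(0,\epsilon_0]$ once $\epsilon_0$ is chosen small enough. The crucial input is the uniform Liouville property for the family $\{K_\epsilon\}$ established in \cite{BCHV}: the strict inequality $\max_{s\in[0,1]}f'(s)<\inf_\epsilon\inf_{x\notin K_\epsilon}\|J(x-\cdot)\|_{L^1(\mathbb{R}^N\setminus K_\epsilon)}$ assumed in the statement is precisely the quantitative condition guaranteeing that every bounded stationary solution of \eqref{1-1} on $\mathbb{R}^N\setminus K_\epsilon$ that tends to $1$ at spatial infinity is identically equal to $1$. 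This is the replacement, in the perturbed setting, for the convex-obstacle Liouville result used inside Theorem \ref{th1-1}.

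I would first reconstruct the entire solution $u_\epsilon$ on $\Omega_\epsilon:=\mathbb{R}^N\setminus K_\epsilon$ by the sub/super-solution scheme of Theorem \ref{th1-1}. The Lipschitz estimate of \cite{L2010} depends only on $f$ and $J$, hence is uniform in $\epsilon$; the planar front $\phi(x_1+ct)$ coming from \eqref{1-3} is independent of the obstacle; and since $K_\epsilon\subset K_1$ for every $\epsilon$, the defect produced by the missing convolution mass over $K_\epsilon$ is uniformly bounded by $\|J\|_\infty|K_1|$ times the tail of $\phi$. Thus a super-solution of the form $\phi(x_1+ct-\xi(t))+qe^{-\lambda t}$, with $\xi$ chosen to absorb that defect, and its companion sub-solution can be built with constants independent of $\epsilon$. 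Pushing the initial time to $-\infty$ and extracting a diagonal limit via the uniform Lipschitz bound produces $u_\epsilon$ satisfying $0<u_\epsilon<1$, $\partial_t u_\epsilon>0$, and $u_\epsilon(x,t)-\phi(x_1+ct)\to 0$ as $t\to-\infty$ uniformly in $x\in\overline{\Omega_\epsilon}$. Uniqueness in this class follows from the same monotone-iteration/comparison argument as in Theorem \ref{th1-1}.

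For the spatial asymptotics as $|x|\to+\infty$, I would translate $u_\epsilon$ along sequences of points escaping to infinity and pass to a limit; the obstacle $K_\epsilon$ then recedes, producing a whole-space entire solution with the same left-end profile, which the uniqueness of bistable planar fronts forces to coincide with $\phi(x_1+ct)$. For the $t\to+\infty$ recovery (under the monostable-between-$0$-and-$1$ assumption), I would extract a limit along $t_n\to+\infty$, identify it as a bounded stationary solution of the obstacle problem on $\Omega_\epsilon$ that tends to $1$ at infinity, and invoke the uniform Liouville result of Step~1 to conclude that this limit is identically $1$; the time-monotonicity of $u_\epsilon$ then upgrades pointwise to uniform convergence.

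The main obstacle is that several steps in the proof of Theorem \ref{th1-1} exploit convexity of $K$ directly, notably the sliding and reflection comparisons used to propagate positivity around the obstacle. I would therefore pick $\epsilon_0$ so small that $\partial K_\epsilon$ lies in an arbitrarily narrow $C^{0,\alpha}$-tubular neighborhood of $\partial K$, so that these convexity-based estimates persist up to an error that vanishes with $\epsilon$; that error is then absorbed by the uniform Liouville control from \cite{BCHV}. This perturbative compatibility, rather than any new analytic input, is what pins down the admissible range $(0,\epsilon_0]$.
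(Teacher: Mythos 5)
Your proposal is essentially correct and follows the same route as the paper, which treats Theorem \ref{th1-4} very tersely: the entire proof of Theorem \ref{th1-1} is rerun with $K$ replaced by $K_\epsilon$, and the only place where the geometry of the obstacle enters is the Liouville theorem for the stationary problem \eqref{5-2}. The paper invokes Theorems 2.4 and 2.6 of \cite{BCHV}; the quantitative hypothesis $\max_{s\in[0,1]}f'(s)<\inf_\epsilon\inf_{x}\|J(x-\cdot)\|_{L^1(\mathbb{R}^N\setminus K_\epsilon)}$ is exactly the condition under which the Liouville property is robust under $C^{0,\alpha}$ deformations, and this is what produces $\epsilon_0$. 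Your first three paragraphs capture this correctly, including the observations that the Lipschitz estimate, the planar front, and the sub/super-solution constants are uniform in $\epsilon$ (indeed $K_\epsilon\subset K_1$ keeps everything in a fixed bounded set).

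Your final paragraph, however, addresses a difficulty that does not exist in this paper's argument. The construction and uniqueness of the entire solution (Section 3) explicitly do not use convexity of the obstacle --- the paper states that only boundedness of $K$ is needed there --- and the far-field analysis of Section 4 likewise never uses convexity. There are no ``sliding and reflection comparisons'' exploiting convexity whose estimates would need to persist under perturbation; convexity appears solely through the citation of the Liouville result, and the perturbed version of that result is supplied wholesale by \cite{BCHV} rather than by absorbing a vanishing error into it. So the perturbative machinery you propose at the end is unnecessary, and the claim that a convexity-defect error is ``absorbed by the uniform Liouville control'' is not a step that could be made precise; the correct (and simpler) statement is that $\epsilon_0$ is exactly the threshold furnished by the robustness theorem of \cite{BCHV}, after which the proof of Theorem \ref{th1-1} applies verbatim.
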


In this paper, we establish the existence of entire solutions of the nonlocal dispersal equation \eqref{1-1} with the interior domain $K$. Compare to the Laplacian diffusion problem, the geometry of the set $K$ could be arbitrary provided it is convex. It is naturally to ask if the kernel function is not compacted supported, whether the entire solution we construct in this paper exists. We conjecture that when the obstacle is not convex we say any compact set of $\mathbb{R}^N$, the entire solution of \eqref{1-1} can still recover its shape, but it converges to the nonconstant stationary solution in any bounded subset of $\mathbb{R}^N$ containing $K$.
We shall study these in a future work.

This paper is organized as follows. In Section 2, we consider the Cauchy problem and establish the comparison principle for \eqref{1-1}. The unique entire solution is constructed in Section 3. In Section 4, we study the behaviors of the entire solution far away from $K$ in the space. Section 5 is devoted to discussing the asymptotic behavior of the entire solution as time goes to positive infinity.

\section{Preliminaries}
\subsection{The nonlocal Cauchy problem}
\noindent

We first consider the nonlocal Cauchy problem
\begin{equation}\label{2-1}\left\{\begin{aligned}
&u_t(x,t)=\int_{\Omega}J(x-y)[u(y,t)-u(x,t)]dy+f(u),~x\in\Omega,~t\geq0, \\
&u(x,0)=u_0(x),~x\in\Omega,
\end{aligned}\right.
\end{equation}
where $\Omega$ is a subset of $\mathbb{R}^N$. We call $u(x,t)$ a solution of \eqref{2-1}, if it satisfies
\begin{equation}\label{2-2}
u(x,t)=u_0(x)+\int_0^t\int_{\Omega}J(x-y)[u(y,s)-u(x,s)]dyds+\int_0^tf(u(x,s))ds
\end{equation}
for $x\in\Omega$ and $t\geq0$. Then we have the following theorem.
\begin{theorem}\label{t1}
Suppose that $(J)$ holds and $f\in C^{1,1}(\mathbb{R})$. Then, for any $u_0\in L^1(\Omega)$,  there exists a unique solution $u\in C(L^1(\Omega),[0,t_0])$ to \eqref{2-2} for some $t_0>0$.
\end{theorem}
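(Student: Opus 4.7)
The plan is to apply the Banach contraction mapping theorem on the space $X=C([0,t_0];L^1(\Omega))$ equipped with the norm $\|u\|_X=\sup_{t\in[0,t_0]}\|u(\cdot,t)\|_{L^1(\Omega)}$. Define the integral operator
\[(Tu)(x,t)=u_0(x)+\int_0^t\int_\Omega J(x-y)\bigl[u(y,s)-u(x,s)\bigr]\,dy\,ds+\int_0^t f(u(x,s))\,ds,\]
so that \eqref{2-2} is equivalent to the fixed-point equation $u=Tu$. I will show that $T$ maps $X$ into itself and is a strict contraction for $t_0$ sufficiently small, which yields a unique fixed point, hence the desired solution.

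First I would check that $T(X)\subset X$. Since $J\in L^1(\mathbb{R}^N)$ with $\int_{\mathbb{R}^N}J=1$, Young's inequality applied to the zero-extension of $u(\cdot,s)$ to $\mathbb{R}^N$ shows that the convolution term lies in $L^1(\Omega)$ with norm at most $\|u(\cdot,s)\|_{L^1(\Omega)}$, and the multiplicative term $u(x,s)\int_\Omega J(x-y)\,dy$ is similarly controlled since $0\leq\int_\Omega J(x-y)\,dy\leq 1$. The reaction term $f\circ u(\cdot,s)$ lies in $L^1(\Omega)$ because, after the harmless extension described in the last paragraph, $f(0)=0$ and $f$ is globally Lipschitz on $\mathbb{R}$ with some constant $L_f$. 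Continuity of $t\mapsto(Tu)(\cdot,t)$ in $L^1(\Omega)$ then follows from absolute continuity of the Lebesgue integral applied to each term.

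For the contraction estimate, subtract the definitions of $Tu$ and $Tv$, take the $L^1(\Omega)$-norm in $x$, swap the order of integration via Fubini in the convolution terms, and apply the Lipschitz bound $|f(a)-f(b)|\leq L_f|a-b|$ in the reaction term. Using $\int_{\mathbb{R}^N} J = 1$ twice, I obtain
\[\|(Tu-Tv)(\cdot,t)\|_{L^1(\Omega)}\leq(2+L_f)\int_0^t\|(u-v)(\cdot,s)\|_{L^1(\Omega)}\,ds,\]
whence $\|Tu-Tv\|_X\leq t_0(2+L_f)\|u-v\|_X$. Any $t_0<(2+L_f)^{-1}$ therefore makes $T$ a strict contraction on $X$; its unique fixed point solves \eqref{2-2} on $[0,t_0]$.

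The main obstacle is ensuring that $f\circ u$ genuinely lies in $L^1(\Omega)$ with a uniform Lipschitz control, since $u_0\in L^1$ may be unbounded and $\Omega$ has infinite measure; this forces both $f(0)=0$ and a global Lipschitz bound on $f$. For the nonlinearity of interest in \eqref{1-1}, which is a $C^{1,1}([0,1])$ bistable/multistable function with $f(0)=f(1)=0$, both requirements are met by extending $f$ to a compactly supported Lipschitz function on $\mathbb{R}$ (for instance, by cutting it off smoothly outside a neighborhood of $[0,1]$). This extension is harmless because the comparison principle developed in Subsection 2.2 below will confine all solutions of interest to $[0,1]$, where $f$ is unchanged; once it is in force, the Banach fixed-point argument above produces the claimed local-in-time $L^1$ solution.
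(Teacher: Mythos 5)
Your proof is essentially the paper's: the same contraction-mapping argument for the same operator $\mathcal{T}$ on $C([0,t_0];L^1(\Omega))$, with the same smallness condition on $t_0$ coming from the constant $2+\mathrm{Lip}(f)$. You are in fact somewhat more careful than the paper, which tacitly uses $|f(w)|\leq\sup|f'|\cdot|w|$ (i.e.\ $f(0)=0$ together with a global Lipschitz bound) when checking that $\mathcal{T}$ maps $L^1(\Omega)$ into itself, whereas you make the needed extension of $f$ explicit.
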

\begin{proof}
Define the operator $\mathcal{T}$ as follows \[\mathcal{T}w=u_0(x)+\int_0^t\int_{\Omega}J(x-y)[w(y,s)-w(x,s)]dyds+\int_0^tf(w(x,s))ds.\]
We can see that
\[\|\mathcal{T}w\|\leq\|u_0(x)\|+[2+\sup\limits_{\tau\in\mathbb{R}}f'(\tau)]t_0\|w(x,t)\|,\]
 which means $\mathcal{T}$ maps $C(L^1(\Omega),[0,t_0])$ into $C(L^1(\Omega),[0,t_0])$.
On the other hand, we have
\begin{equation*}\begin{split}
&\|\mathcal{T}u(x,t)-\mathcal{T}v(x,t)\|\\
=&\left\|\int_0^t\int_{\Omega}J(x-y)[u(y,s)-v(y,s)+v(x,s)-u(x,s)]dyds
+\int_0^t[f(u(x,s))-f(v(x,s))ds\right\| \\
\leq& 2t_0\|u(x,t)-v(x,t)\|+t_0\sup\limits_{\tau\in\mathbb{R}}f'(\tau)\|u(x,t)-v(x,t)\|\\
\leq&[2+\sup\limits_{\tau\in\mathbb{R}}f'(\tau)]t_0\|u(x,t)-v(x,t)\|.
\end{split}
\end{equation*}
 Let $t_0$ be sufficiently small such that $[2+\sup\limits_{\tau\in\mathbb{R}}f'(\tau)]t_0<1$. Then we obtain that $\mathcal{T}$ is a strict contraction mapping in $C(L^1(\Omega),[0,t_0])$.
\end{proof}

To extend the solution to $[0,+\infty)$ we may take $u(x,t_0)\in L^1(\Omega)$ for the initial datum and obtain a solution in $[t_0,2t_0]$. Iterating this procedure we get a solution in $[0,+\infty)$.

\subsection{Comparison principle}
\noindent

\begin{theorem}\label{t2.1'}
Suppose that the assumptions of Theorem \ref{th1-1} hold. Furthermore, let
  $u(x,t)$, $v(x,t)\in C^1(L^\infty(\Omega,\mathbb{R}),[0,+\infty))$ are uniformly bounded and satisfy
\begin{equation*}\left\{\begin{aligned}
& \frac{\partial u(x,t)}{\partial t}-\left(\int_{\Omega}J(x-y)[u(y,t)-u(x,t)]dy\right)+f(u(x,t))\geq0,~(x,t)\in\Omega\times(0,+\infty),\\
& u(x,0)\geq0,~x\in\Omega,
\end{aligned}\right.
\end{equation*}
\begin{equation*}\left\{\begin{aligned}
& \frac{\partial v(x,t)}{\partial t}-\left(\int_{\Omega}J(x-y)[v(y,t)-v(x,t)]dy\right)+f(v(x,t))\leq0,~(x,t)\in\Omega\times(0,+\infty),\\
& v(x,0)\leq0,~x\in\Omega,
\end{aligned}\right.
\end{equation*}
respectively, where $u(x,0),~v(x,0)\in L^{\infty}(\Omega)$. Then,
 \[u(x,t)\geq v(x,t)~\text{in}~ \Omega\times[0,+\infty).\]
\end{theorem}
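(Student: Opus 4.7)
The plan is to reduce the comparison principle to a small-time Gronwall estimate on the negative part of $w := u - v$, then iterate. Setting $w = u - v$, subtracting the two differential inequalities, and applying the mean value theorem to the nonlinearity (which is valid since $f \in C^{1,1}$ and $u, v$ are uniformly bounded), one obtains
\[
w_t + (a(x) + c(x,t)) w \geq \int_\Omega J(x-y) w(y,t)\,dy,\qquad (x,t) \in \Omega \times (0,+\infty),
\]
where $a(x) = \int_\Omega J(x-y)\,dy \in [0,1]$, and $c(x,t) = \int_0^1 f'(v+s(u-v))\,ds$ is uniformly bounded; denote $\beta(x,t) := a(x) + c(x,t)$ and $B := \|\beta\|_\infty$. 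The initial condition $w(\cdot,0) \geq 0$ is inherited from the hypotheses.

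Multiplying by the integrating factor $\exp\!\bigl(\int_0^t \beta(x,s)\,ds\bigr)$ and integrating in $t$, I obtain the pointwise lower bound
\[
w(x,t) \;\geq\; e^{-\int_0^t \beta\,ds}\,w(x,0) \;+\; \int_0^t e^{-\int_\tau^t \beta\,ds} \int_\Omega J(x-y)\,w(y,\tau)\,dy\,d\tau.
\]
Introduce $M(t) := \sup_{y\in\Omega,\ \tau \in [0,t]} w^-(y,\tau)$, which satisfies $M(0)=0$ and is non-decreasing. Since $\int_\Omega J(x-y)\,dy \leq 1$, the inner convolution is bounded below by $-M(\tau) \geq -M(t)$; combined with $|\beta| \leq B$, this gives $w(x,t) \geq -\tfrac{M(t)}{B}(e^{Bt}-1)$. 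Taking suprema yields
\[
M(t) \;\leq\; \frac{e^{Bt}-1}{B}\,M(t).
\]
Choosing $t_1 > 0$ such that $(e^{Bt_1}-1)/B < 1$ forces $M(t)=0$, i.e.\ $u \geq v$ on $\Omega \times [0,t_1]$.

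Since $t_1$ depends only on $B$ and not on the initial data, I iterate: use $u(\cdot,t_1) \geq v(\cdot,t_1)$ as new initial data and repeat on $[t_1, 2t_1]$, and so on, covering $[0, +\infty)$ in finitely many steps on any bounded horizon. The main obstacle is that the nonlocal operator $\int_\Omega J(x-y)[w(y)-w(x)]\,dy$ cannot be handled by the classical pointwise minimum-principle argument that works for $\Delta$: there is no inequality of the form ``$Lw(x_*) \leq 0$ at an interior minimum $x_*$'' because the convolution averages $w$ over a nonzero neighborhood, and even worse, the minimum over $\overline{\Omega}\times[0,T]$ may fail to be attained. The integrating-factor reformulation sidesteps this entirely by converting the differential inequality into an integral one, so that the only ingredients needed are the boundedness of $\beta$ and the normalization $\int J \leq 1$, with the price being that the conclusion is first obtained only on a short time interval and then bootstrapped.
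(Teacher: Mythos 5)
Your proof is correct, but it follows a genuinely different route from the paper's. The paper argues by contradiction at a (near-)touching point: assuming $W=u-v$ dips below $-\epsilon e^{2K't}$ somewhere, it introduces an auxiliary barrier $W^-(x,t,\beta)=-\epsilon(\tfrac34+\beta Z(x))e^{2K't}$ with a weight $Z$ that grows toward infinity, so that the contact point between $W$ and the barrier is forced to occur at a finite $x^*$ despite the non-compactness of $\Omega$; evaluating the differential inequality there and taking $K'$ large yields the contradiction. You instead convert the differential inequality into an integral one via an integrating factor, bound the convolution term by the running supremum $M(t)$ of the negative part of $w$ using only $\int_\Omega J(x-\cdot)\leq 1$ and $\|\beta\|_\infty\leq B$, obtain the self-improving estimate $M(t)\leq B^{-1}(e^{Bt}-1)M(t)$, and conclude $M\equiv 0$ on a short interval whose length depends only on $B$, then iterate. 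Your Gronwall-type argument sidesteps the attainment-of-infimum issue entirely (no barrier function $Z$ is needed), uses nothing about the geometry of $\Omega$, and is arguably more robust; the paper's argument is the classical maximum-principle style adapted to the nonlocal operator. One cosmetic remark: both your proof and the paper's implicitly use that $f'$ is defined and bounded on the full (uniformly bounded) range of $u$ and $v$, not just on $[0,1]$; this requires the $C^{1,1}$ extension of $f$ that the paper only introduces in Section 3, but it affects both arguments equally.
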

\begin{proof}
Define $W(x,t)=u(x,t)-v(x,t)$, we have
\begin{equation}\label{2-3}\begin{split}
W_t(x,t)&\geq\int_{\Omega}J(x-y)[W(y,t)-W(x,t)]dy+f(u)-f(v)\\
&=\int_{\Omega}J(x-y)[W(y,t)-W(x,t)]dy+F(x,t)W(x,t),
\end{split}\end{equation}
where
\[F(x,t)=\int_0^1f'(v(x,t)+\theta W(x,t))d\theta.\]
Suppose that there exist $t_*>0$ and $x_*\in\Omega$ such that $W(x_*,t_*)<0$ and let $\theta_*=-W(x_*,t_*)$. Moreover, Picking $\epsilon>0$ and $K'>0$ such that
$\theta_*=\epsilon e^{2K't_*}$, we then define
\[T_*=\sup\left\{t\geq0\mid W(x,t)>-\epsilon e^{2K't}~\text{for}~x\in\Omega\right\}.
\]
Note that, $W(x,\cdot)\in C^1(0,\infty)$ gives that $0<T_*\leq t_*$. Besides, we have
\[\inf\limits_{\Omega}W(x,T_*)=-\epsilon e^{2K'T_*}.
\]
Without loss of generality, we may assume that $0\in\Omega$ and $W(0,T_*)<-\frac{7}{8}\epsilon e^{2K'T_*}$.

Consider now the function
\begin{equation*}
  W^-(x,t,\beta)=-\epsilon\left(\frac{3}{4}+\beta Z(x)\right)e^{2K't},
\end{equation*}
in which $\beta>0$ is a parameter and $Z\in L^{\infty}(\Omega,\mathbb{R})$ with $Z(0)=1$, $\lim\limits_{|x|\rightarrow+\infty}Z(x)=3,~1\leq Z(x)\leq3$. Take $\beta_*\in(\frac{1}{8},\frac{1}{4}]$ as the minimal value of $\beta$ for which $W(x,t)\geq W^-(x,t)$ holds for all $(x,t)\in\Omega\times[0,T_*]$. Since
\[\lim\limits_{|x|\rightarrow+\infty}W^-(x,t,\beta_*)=-\epsilon\left(\frac{3}{4}+3\beta_*\right)
e^{2K't}<-\frac{9}{8}\epsilon e^{2K't},\]
there exist $x^*\in\Omega$ and $0<t_0<T_*$ such that $W(x^*,t_0)=W^-(x^*,t_0,\beta_*)$. The definition of $\beta_*$ now implies that
\[W_t(x^*,t_0)\leq W_t^-(x^*,t_0,\beta_*).\]
In addition,
\[\int_\Omega J(x^*-y)[W(y,t_0)-W(x^*,t_0)]dy\geq\int_\Omega J(x^*-y)[W^-(y,t_0,\beta_*)-W^-(x^*,t_0,\beta_*)]dy,
\]
by the fact that $W(x,t)\geq W^-(x,t,\beta_*)$ for all $(x,t)\in\Omega\times[0,T_*]$. It follows from \eqref{2-3} that
\begin{equation*}\begin{split}
  -\frac{7}{4}\epsilon e^{2K't_0}\geq W^-_t(x^*,t_0,\beta_*)\geq& W_t(x^*,t_0)\\
  \geq&\int_\Omega J(x^*-y)[W^-(y,t_0,\beta_*)-W^-(x^*,t_0,\beta_*)]dy\\
  &+F(x^*,t_0)W^-(x^*,t_0,\beta_*).
\end{split}\end{equation*}
In particular, we obtain
\begin{equation*}
\begin{split}
-\frac{7}{4}\epsilon K'e^{2K't_0}&\geq\beta_*\epsilon\int_\Omega J(x^*-y)[Z(y)-Z(x^*)]dye^{2K't_0}+F(x^*,t_0)W^-(x^*,t_0,\beta_*)\\
& \geq-\epsilon\left[2\beta_*+\left(3\beta_*+\frac{3}{4}\right)\right]e^{2K't_0}\\
& \geq-2\epsilon e^{2K't_0}.
   \end{split}
\end{equation*}
 This leads to a contradiction upon choosing $K'$ to be sufficiently large.
\end{proof}
\begin{corollary}\label{l1}
Under the assumptions of Theorem \ref{t2.1'}, let $u(x,t)$ and $v(x,t)$ be solutions of \eqref{1-1} with initial values $u(x,0)$ and $v(x,0)$, respectively. If $u(x,0)\geq v(x,0)$ and $u(x,0)\not\equiv v(x,0)$, then $u(x,t)>v(x,t)$ for all $x\in\Omega$ and $t\geq 0$.
\end{corollary}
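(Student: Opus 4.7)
The plan is to first invoke Theorem \ref{t2.1'} to obtain the non-strict inequality $w(x,t) := u(x,t) - v(x,t) \geq 0$ on $\Omega \times [0,+\infty)$, and then promote this to a strict inequality for $t > 0$ by a propagation argument based on the linear equation satisfied by $w$, exploiting the fact that $J(0) > 0$ together with the connectedness of $\Omega = \mathbb{R}^N \setminus K$.

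More precisely, I would write
\[
w_t(x,t) = \int_\Omega J(x-y)\,[w(y,t) - w(x,t)]\,dy + F(x,t)\,w(x,t),
\]
where $F(x,t) = \int_0^1 f'(v(x,t) + \theta w(x,t))\,d\theta$ is bounded. Setting $a(x) := \int_\Omega J(x-y)\,dy$ and multiplying by the integrating factor $E(x,t) := \exp\!\bigl(\int_0^t [a(x) - F(x,s)]\,ds\bigr)$, I get
\[
\frac{d}{dt}\bigl[E(x,t)\,w(x,t)\bigr] = E(x,t) \int_\Omega J(x-y)\,w(y,t)\,dy \;\geq\; 0,
\]
since $J \geq 0$ and $w \geq 0$ by the first step. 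Integrating from $0$ to $t$ yields a representation of $w(x,t)$ as the sum of $E(x,t)^{-1} w(x,0)$ and a non-negative convolution-type integral, both summands being nonnegative.

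Now suppose, for contradiction, that there exist $x_0 \in \Omega$ and $t_0 > 0$ with $w(x_0, t_0) = 0$. The representation forces both $w(x_0, 0) = 0$ and $\int_0^{t_0} E(x_0, t) \int_\Omega J(x_0 - y)\,w(y,t)\,dy\,dt = 0$. Since $w$ is continuous in $(x,t)$ (as $u,v$ are), $E > 0$, and $J(0) > 0$ with $J$ continuous, there exists $r > 0$ with $J > 0$ on $B(0,r)$; hence $w(y, t) \equiv 0$ for all $(y,t) \in (B(x_0,r) \cap \Omega) \times [0, t_0]$. Iterating this local vanishing property (applying it to each new point $y_0$ at which $w(y_0, t_0) = 0$) and using that $\Omega$ is path-connected (because $K$ is compact convex, hence $\mathbb{R}^N \setminus K$ is connected), I can cover any compact subset of $\Omega$ by finitely many such balls and conclude $w \equiv 0$ on $\Omega \times [0, t_0]$. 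In particular $u(\cdot, 0) \equiv v(\cdot, 0)$, contradicting the hypothesis $u(\cdot, 0) \not\equiv v(\cdot, 0)$.

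The main obstacle I anticipate is the propagation/iteration step: verifying that the finite-hop procedure along balls of fixed radius $r$ (determined by the support of $J$) can reach every point of $\Omega$, and that the requirement $B(y_0, r) \subset \Omega$ does not get obstructed by the obstacle $K$. This is handled by choosing paths in $\Omega$ connecting $x_0$ to the target point and covering them by balls centered in $\Omega$; the key quantitative input is that $J(0) > 0$ forces a fixed positive $r$, so only finitely many balls are needed along each compact path, and the vanishing property transfers from ball to ball without loss.
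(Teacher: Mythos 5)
Your proposal is correct, but it follows a genuinely different route from the paper. The paper defines $\tilde{w}(x,t)=e^{pt}w(x,t)+\epsilon t$ with $p$ large enough that $p+F\geq0$, assumes $w$ vanishes somewhere, and derives a contradiction at a putative minimum point of $\tilde{w}$ where $\tilde{w}_t=0$ — a maximum-principle style argument. You instead integrate the linear equation for $w$ explicitly: the integrating factor $E(x,t)=\exp\bigl(\int_0^t[a(x)-F(x,s)]\,ds\bigr)$ gives the Duhamel-type identity $E(x,t)w(x,t)=w(x,0)+\int_0^tE(x,s)\int_\Omega J(x-y)w(y,s)\,dy\,ds$ with both summands nonnegative, and a zero of $w$ at $(x_0,t_0)$, $t_0>0$, then forces $w\equiv0$ on $(B(x_0,r)\cap\Omega)\times[0,t_0]$; connectedness of $\Omega$ and iteration over balls of the fixed radius $r$ (where $J>0$) spread this to all of $\Omega$, contradicting $u(\cdot,0)\not\equiv v(\cdot,0)$. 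Your argument is arguably the more complete one: the paper's proof never justifies the existence of a minimum point with $\tilde{w}_t(x_*,t_*)=0$ over the noncompact set $\Omega\times[0,\infty)$, and, more seriously, it never invokes the hypothesis $u(\cdot,0)\not\equiv v(\cdot,0)$ at all, whereas your propagation-of-zeros step is exactly where that hypothesis enters. Two caveats apply to your write-up: solutions here are only $L^\infty$ (not continuous) in $x$, so the vanishing on $B(x_0,r)\cap\Omega$ and the subsequent iteration must be phrased almost everywhere (the representation formula holds pointwise in $x$, so this costs nothing but bookkeeping; alternatively, for the entire solutions to which the corollary is applied, Proposition \ref{l2.4} supplies Lipschitz continuity in $x$); and the strict inequality can of course only hold for $t>0$, not $t\geq0$ as literally stated — a defect of the statement shared by both proofs.
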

\begin{proof}
Denote
\[w(x,t)=u(x,t)-v(x,t),\quad\tilde{w}(x,t)=e^{pt}w(x,t)+\epsilon t.\]
Suppose that there exists $(x^0,t^0)\in\overline{\Omega}\times[0,+\infty)$ such that $w(x^0,t^0)=0$ and
let $p>0$ be large enough such that
\[p+F(x,t)\geq0 ~\text{for all}~ (x,t)\in\Omega\times[0,+\infty),\]
 where $F(x,t)$ is defined in the proof of Theorem \ref{t2.1'}.
Then there exists $(x_*,t_*)$ such that
\[\tilde{w}(x_*,t_*)=\min\tilde{w}(x,t)\geq0,\quad\tilde{w}_t(x_*,t_*)=0.\]
 On the other hand, we know that
\begin{align*}
\tilde{w}_t(x_*,t_*)=&pw(x_*,t_*)e^{pt_*}+e^{pt_*}w_t(x_*,t_*)+\epsilon\\
>&pw(x_*,t_*)e^{pt_*}+e^{pt_*}\left(\int_{\Omega}J(x_*-y)[w(y,t_*)-w(x_*,t_*)]dy+F(x_*,t_*)w(x_*,t_*)\right)\\
=&\int_{\Omega}J(x_*-y)[\tilde{w}(y,t_*)-\tilde{w}(x_*,t_*)]dy+(p+F(x_*,t_*))w(x_*,t_*)e^{pt_*}\\
\geq&0.
\end{align*}
Thus we get a contradiction.
\end{proof}

\subsection{Traveling waves}
\noindent

It follows from the results of  Bates et al.  \cite{PB1997} and Sun et al. \cite{Sun2011} that the solution $\phi(z)$ of \eqref{1-3} satisfies
\begin{equation}\label{2-4}\begin{split}
&\alpha_0e^{\lambda z}\leq\phi(z)\leq\beta_0e^{\lambda z},~z\leq0,\\
&\alpha_1e^{-\mu z}\leq1-\phi(z)\leq\beta_1e^{-\mu z},~z>0,
\end{split}\end{equation}
where $\alpha_0,~\alpha_1,~\beta_0$ and $\beta_1$ are some positive constants, $\lambda$ and $\mu$ are the positive roots of
\begin{equation*}
  c\lambda=\int_{\mathbb{R}}J_1(y)e^{-\lambda y}dy-1+f'(0),~c\mu=\int_{\mathbb{R}}J_1(y)e^{-\mu y}dy-1+f'(1).
\end{equation*}
Moreover, we have
\begin{equation*}
\begin{split}
& \gamma_0e^{\lambda z}\leq\phi'(z)\leq\delta_0e^{\lambda z},~z\leq0, \\
& \gamma_1e^{-\mu z}\leq\phi'(z)\leq\delta_1e^{-\mu z},~z>0
\end{split}
\end{equation*}
for some constants $\gamma_0,~\gamma_1,~\delta_0$ and $\delta_1>0$.
At last, note that $f\in C^{1,1}([0,1])$, there exists some $L_f>0$ such that
\begin{equation*}
  |f(u+v)-f(u)-f(v)|\leq L_fuv~\text{for}~0\leq u,v\leq1.
\end{equation*}

\section{Existence and uniqueness of entire solution}
\noindent

This section is devoted to studying the existence and uniqueness of entire solution of \eqref{1-1} that behaves as a planar traveling front until it approaches the interior domain $K$.
 In what follows, we denote $\theta_0$ the largest positive constant such that
\begin{equation*}
f(\tau)\leq0 ~\text{for}~0\leq\tau\leq\theta_0.
\end{equation*}
By the assumption $(F)$, we have $0<\theta_0<1$. We normalize the function $\phi$ in \eqref{1-3} by
$\phi(0)=\theta_0.$ In addition, we further assume that $\phi''(\xi)\geq0$ for $\xi\leq0$. Then we can see the solution $\phi$ of \eqref{1-3} is unique.
The main result of this section is stated as follows.
\begin{theorem}\label{t2.1}
Assume that $(F)$ and $(J)$ hold. If $K\subset\{x\in\mathbb{R}^N:~x_1\leq0\}$, then there exists a unique entire solution $U(x,t)$ of \eqref{1-1} satisfying
 \[0<U(x,t)<1,\quad U_t(x,t)>0~\text{for all}~(x,t)\in\overline{\Omega}\times\mathbb{R}\]
 and
\begin{equation}\label{3-1}
U(x,t)\rightarrow\phi(x_1+ct) ~\text{as}~t\rightarrow-\infty~\text{uniformly in}~x\in\overline{\Omega}.
\end{equation}
\end{theorem}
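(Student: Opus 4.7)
My plan is to construct $U(x,t)$ as the monotone limit of a family of Cauchy problems trapped between a sub- and super-solution built by perturbing the planar front $\phi(x_1+ct)$, then read off the qualitative properties ($0<U<1$, $U_t>0$ and \eqref{3-1}) and uniqueness from the comparison principle in Corollary \ref{l1}.

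The heart of the argument is the construction of the sub- and super-solutions. Because the convolution in \eqref{1-1} is taken over $\Omega$ rather than $\mathbb R^N$, $\phi(x_1+ct)$ is not an exact solution: substituting it into \eqref{1-1} and invoking the one-dimensional identity \eqref{1-3} leaves the residual
\begin{equation*}
\mathcal R(x,t)=\int_K J(x-y)\bigl[\phi(y_1+ct)-\phi(x_1+ct)\bigr]\,dy,
\end{equation*}
supported in the $R_J$-neighbourhood $\mathcal N$ of $K$, where $R_J$ denotes the radius of $\operatorname{supp}J$. The exponential decay \eqref{2-4} of $\phi$ at $-\infty$ gives $\|\mathcal R(\cdot,t)\|_{L^\infty(\Omega)}\le C\,e^{\lambda c t}$ for $t$ sufficiently negative. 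I therefore seek sub- and super-solutions of the form
\begin{equation*}
u^\pm(x,t)=\phi\bigl(x_1+ct\pm\xi(t)\bigr)\pm\epsilon(t),\qquad \xi(t)=B\,e^{\omega t},\quad \epsilon(t)=A\,e^{\omega t},
\end{equation*}
with $\omega\in(0,\lambda c]$ and $A,B>0$ large; truncation to $[0,1]$ where necessary preserves the sub-/super-solution property thanks to $f(0)=f(1)=0$. Using \eqref{1-3} the super-solution inequality for $u^+$ reduces to
\begin{equation*}
\xi'\phi'(z_+)+\epsilon'+\bigl[f(\phi(z_+))-f(\phi(z_+)+\epsilon)\bigr]+\mathcal R_{+\xi}(x,t)\ \ge\ 0,
\end{equation*}
and analogously for $u^-$, where $z_\pm=x_1+ct\pm\xi$ and $\mathcal R_{\pm\xi}$ is $\mathcal R$ with shifted argument. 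On $\mathcal N$, $z_+$ is still very negative as $t\to-\infty$, so $f'(\phi(z_+))\approx f'(0)<0$ and the term $\epsilon'-f'(\phi)\epsilon=(\omega+|f'(0)|+o(1))A e^{\omega t}$ dominates $|\mathcal R_{+\xi}|\le C e^{\lambda c t}$ once $A$ is large and $\omega\le\lambda c$. Outside $\mathcal N$, $\mathcal R_{+\xi}\equiv 0$ and the shift term $\xi'\phi'(z_+)=B\omega\phi'(z_+)e^{\omega t}$, with $B$ chosen large relative to $A$, handles the bounded transition zone where $\phi'\ge\gamma>0$ and $f'(\phi)$ may be positive.

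With $u^\pm$ in hand on $\Omega\times(-\infty,T_0]$, for each integer $n\ge -T_0$ I let $u_n$ be the solution of \eqref{1-1} on $[-n,\infty)$ with initial datum $u_n(\cdot,-n)=u^-(\cdot,-n)$, obtained from Theorem \ref{t1} by iteration in time. Corollary \ref{l1} gives $u^-\le u_n\le u^+$ on $[-n,T_0]$ and, by comparing $u_m$ at time $-n$ with $u_n$ for $m>n$, $u_m\ge u_n$ on $[-n,\infty)$. Monotone convergence in the integral form \eqref{2-2} then produces the entire solution $U(x,t):=\lim_n u_n(x,t)$ satisfying $u^-\le U\le u^+$ on $t\le T_0$, whence \eqref{3-1}. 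Strict positivity $0<U<1$ follows from Corollary \ref{l1} applied to $U$ against the constants $0$ and $1$, while $U_t>0$ follows by comparing $u_n(\cdot,\cdot+h)$ with $u_n(\cdot,\cdot)$ for $h>0$ (the ordering being preserved at $t=-n$ because $\phi$ is strictly increasing in its argument), passing to the limit and invoking Corollary \ref{l1} a second time for strictness. For uniqueness, if $V$ is any other such entire solution, then given $\varepsilon>0$ one finds $T_\varepsilon\le T_0$ with $u^-(\cdot,T_\varepsilon)-\varepsilon\le V(\cdot,T_\varepsilon)\le u^+(\cdot,T_\varepsilon)+\varepsilon$; Corollary \ref{l1} propagates this sandwich to all $t\ge T_\varepsilon$, and letting $\varepsilon\to 0$, $T_\varepsilon\to-\infty$ forces $V\equiv U$.

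The main obstacle is the construction of $u^\pm$. In the local setting of \cite{BHM,HHV} the planar front solves the interior equation exactly and only the Neumann condition on $\partial K$ must be corrected, which can be done by a localized boundary layer; here $\mathcal R$ is a genuine interior error spread over a full neighbourhood of $K$ and the nonlocal operator admits no pointwise boundary correction, so the perturbation of $\phi$ must be global. Balancing its exponential rate against the spatial decay of $\phi$ (which fixes $\omega\le\lambda c$) and against the possibly positive values of $f'(\phi)$ in the transition zone (which forces $B$ large relative to $A$) is the quantitative core of the argument, and it is this balance that makes the compact support of $J$ and the positioning $K\subset\{x_1\le 0\}$ enter in an essential way.
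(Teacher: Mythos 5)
Your existence construction is broadly workable and follows the same overall strategy as the paper (barriers sandwiching the planar front, monotone limit of Cauchy problems from $t=-n$, comparison), though the paper uses reflected-wave barriers $W^{\pm}$ built from $\phi(x_1+ct\mp\xi(t))$ and $\phi(-x_1+ct\mp\xi(t))$ --- which vanish, respectively are constant in $x$, on $\{x_1<0\}\supset K$ --- rather than your Fife--McLeod pair $\phi(x_1+ct\pm\xi(t))\pm\epsilon(t)$; your bound $\|\mathcal R(\cdot,t)\|_{L^\infty(\Omega)}\le Ce^{\lambda ct}$ and the three-region balancing are correct. However, two steps fail as written. First, your mechanism for $U_t>0$ needs $u^-(\cdot,-n+h)\ge u^-(\cdot,-n)$, i.e.\ that the sub-solution be nondecreasing in $t$. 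It is not: where $\phi(x_1+ct-\xi(t))$ is close to $1$ (i.e.\ $x_1\gg -ct$) one has $\phi'\approx 0$, so $\partial_t u^-\approx-\epsilon'(t)=-A\omega e^{\omega t}<0$. The additive correction $-\epsilon(t)$ that you need in order to absorb the obstacle residual destroys the time-monotonicity of the initial datum. The paper's $W^-$ avoids this precisely because its correction is itself a translate of $\phi$ moving with the same speed law, so $\partial_t W^-=(c-\dot\xi)[\phi'(z_+)-\phi'(z_-)]\ge0$ on the relevant region by the convexity assumption $\phi''\ge0$ on $(-\infty,0]$.

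Second, and more seriously, the uniqueness argument does not close. The functions $u^+(\cdot,t)+\varepsilon$ and $u^-(\cdot,t)-\varepsilon$ are not super-/sub-solutions for bistable $f$ (adding a constant fails wherever $f'>0$), so Corollary \ref{l1} does not ``propagate the sandwich''. Even granting the propagation, at a fixed finite time $t$ you would only obtain $|V(x,t)-U(x,t)|\le u^+(x,t)-u^-(x,t)+2\varepsilon$, and $u^+-u^-$ is of order $\phi(z_+)-\phi(z_-)+2\epsilon(t)$, which is not small at fixed $t$; letting $\varepsilon\to0$ and $T_\varepsilon\to-\infty$ therefore does not force $V\equiv U$. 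The paper instead runs a Fife--McLeod stability argument with barriers built from $U$ itself, namely $\tilde U^{\pm}(x,t)=U\bigl(x,t_0+t\pm\sigma\epsilon(1-e^{-\omega t})\bigr)\pm\epsilon e^{-\omega t}$, whose correction decays forward in time; verifying that these are super-/sub-solutions requires a uniform positive lower bound on $U_t$ on the set where $U\in[\varphi,1-\varphi]$ (Lemma \ref{l2.1}), which in turn rests on the Lipschitz and compactness estimates of Proposition \ref{l2.4}. None of this machinery appears in your proposal, and without it (or a substitute) the uniqueness claim is unproved.
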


In this section, the radial symmetry of $J$ can be released to $J(x)=J(-x)$. Moreover, there is no need the convexity and compactness of the obstacle, while the boundedness of $K$ is necessary. We proof Theorem \ref{t2.1} by sub- and super-solutions.

\subsection{Construction of the entire solution}
\noindent

To establish the entire solution,  we shall construct some suitable sub- and super-solutions. Inspired by \cite{BHM},
we define the sub-solution
\begin{equation*}
W^-(x,t)=\left\{\begin{aligned}
&\phi(x_1+ct-\xi(t))-\phi(-x_1+ct-\xi(t)),~x_1\geq0,\\
&0,~x_1<0,
\end{aligned}\right.
\end{equation*}
and the super-solution
\begin{equation*}
W^+(x,t)=\left\{\begin{aligned}
&\phi(x_1+ct+\xi(t))+\phi(-x_1+ct+\xi(t)),~x_1\geq0,\\
&2\phi(ct+\xi(t)),~x_1<0,
\end{aligned}\right.
\end{equation*}
here $\xi(t)$ is the solution of the following equation
\begin{equation*}
\dot{\xi}(t)=Me^{\lambda_0(ct+\xi)}, ~t<-T,~\xi(-\infty)=0,
\end{equation*}
where $M,~\lambda_0$ and $T$ are positive constants to be specified later.
A direct calculation yields that
 \[\xi(t)=\frac{1}{\lambda_0}\ln\frac{1}{1-c^{-1}Me^{\lambda_0 ct}}.\]
For the function $\xi(t)$ to be defined, one must have $1-c^{-1}Me^{\lambda_0 ct}>0$. Besides, we suppose  that
\[ct+\xi(t)\leq0 ~\text{for}~-\infty<t\leq T.
\]
Thus set $T:=\frac{1}{\lambda_0 c}\ln\frac{c}{c+M}$.
Then the following proposition holds.
\begin{proposition}\label{t2.2}
Assume that $\lambda_0<\min\{\lambda,k_\phi\}$ with given $k_\phi>0$ and $K\subset\mathbb{R}^N\setminus$supp$(J)$.
Then there exists sufficiently large $M>0$ such that $W^-$ and $W^+$ are sub- and super-solutions of \eqref{1-1} in the time range $-\infty<t\leq T_1$ for some $T_1\in(-\infty, T]$.
\end{proposition}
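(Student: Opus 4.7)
The plan is to verify directly that the nonlocal parabolic operator
\[
\mathcal{N}[u](x,t) := u_t(x,t) - \int_\Omega J(x-y)\bigl[u(y,t)-u(x,t)\bigr]dy - f(u(x,t))
\]
satisfies $\mathcal{N}[W^+]\geq 0$ and $\mathcal{N}[W^-]\leq 0$ on $\overline{\Omega}\times(-\infty,T_1]$. Write $\phi_\pm := \phi(\pm x_1+ct+\xi(t))$, so that $W^+=\phi_++\phi_-$ and $W^-=\phi_+-\phi_-$ on $\{x_1\geq 0\}$. The first step is a dimensional reduction: since $J$ is radial (hence even in each coordinate), Fubini reduces convolution of $J$ in $\mathbb{R}^N$ against a function depending only on $y_1$ to a 1D convolution against $J_1$, and then the traveling wave equation \eqref{1-3} yields the identity
\[
\int_{\mathbb{R}^N} J(x-y)\bigl[\phi(\pm y_1+a)-\phi(\pm x_1+a)\bigr]dy \;=\; \pm c\,\phi'(\pm x_1+a)-f\bigl(\phi(\pm x_1+a)\bigr).
\]

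Applying this identity to each piece $\phi_\pm$ and subtracting the boundary correction $\int_K J(x-y)[\phi_\pm(y)-\phi_\pm(x)]dy$ needed to restrict the convolution from $\mathbb{R}^N$ back to $\Omega$, a direct computation in $\{x_1>0\}$ gives
\[
\mathcal{N}[W^+] = \dot\xi(t)\bigl(\phi'_++\phi'_-\bigr) + \bigl[f(\phi_+)+f(\phi_-)-f(\phi_++\phi_-)\bigr] + \int_K J(x-y)\bigl[W^+(y,t)-W^+(x,t)\bigr]dy,
\]
with a parallel decomposition for $\mathcal{N}[W^-]$ in which the interaction bracket carries the opposite sign. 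The principal term $\dot\xi(\phi'_++\phi'_-)=Me^{\lambda_0(ct+\xi)}(\phi'_++\phi'_-)$ is positive; the interaction term is bounded in absolute value by $L_f\phi_+\phi_-$ via the $C^{1,1}$ regularity of $f$; and the $K$-correction is rendered negligible by the standing assumption $K\subset\mathbb{R}^N\setminus\mathrm{supp}(J)$ together with the compact support of $J$. Using the exponential tail bounds of Section 2.3, so that $\phi_+\phi_-$ decays in $ct+\xi$ at the rate $2\lambda$ while the principal term decays at the strictly slower rate $\lambda_0+\lambda$ since $\lambda_0<\lambda$, one sees that $M$ large forces $\mathcal{N}[W^+]\geq 0$ throughout $\{x_1>0\}$.

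It remains to handle $x_1<0$ and check the analogous estimates for $W^-$. For $x_1<0$, $W^+\equiv 2\phi(ct+\xi)$ is independent of $x$, so
\[
\mathcal{N}[W^+](x,t) = 2\dot\xi\,\phi'(ct+\xi) - \int_\Omega J(x-y)\bigl[W^+(y,t)-2\phi(ct+\xi)\bigr]dy - f(2\phi(ct+\xi));
\]
the assumed convexity of $\phi$ on $(-\infty,0]$ combined with the compact support of $J$ makes the integral nonpositive, and $f(2\phi(ct+\xi))\leq 0$ as soon as $2\phi(ct+\xi)\leq\theta_0$, a condition that can be ensured by taking $T_1$ sufficiently negative. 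For $W^-$, the case $x_1<0$ is immediate since $W^-\equiv 0$ and $f(0)=0$ give $\mathcal{N}[W^-]=-\int_\Omega J(x-y)W^-(y,t)dy\leq 0$. The main obstacle is the joint choice of $M$ and $T_1$: $M$ must be large enough to dominate the interaction and $K$-boundary terms uniformly in $x_1\geq 0$, while $T_1$ must be negative enough that $2\phi(cT_1+\xi(T_1))\leq\theta_0$ and that the tail estimates of Section 2.3 translate into the claimed decay gap between the principal and error terms. The strict inequality $\lambda_0<\lambda$ is precisely what makes such a balance possible.
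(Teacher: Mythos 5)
Your overall architecture matches the paper's: reduce the $\mathbb{R}^N$ convolution to a $J_1$-convolution by radial symmetry, invoke the wave equation \eqref{1-3}, isolate the drift term $\dot\xi(t)\,(\cdot)$, bound the nonlinear interaction by $L_f\phi_+\phi_-$, and exploit $K\subset\mathbb{R}^N\setminus\mathrm{supp}(J)$ to kill the $\int_K$ correction. However, there are genuine gaps. First, your displayed identity for $\mathcal{N}[W^+]$ on $\{x_1>0\}$ is not correct: since $W^+(y,t)=2\phi(ct+\xi(t))$ for $y_1<0$ rather than $\phi(y_1+ct+\xi)+\phi(-y_1+ct+\xi)$, restricting the convolution to $\Omega$ produces, besides the $\int_K$ term, the mismatch term
\[
\int_{\Omega\cap\{y_1<0\}}J(x-y)\bigl[2\phi(ct+\xi)-\bigl(\phi(y_1+ct+\xi)+\phi(-y_1+ct+\xi)\bigr)\bigr]\,dy,
\]
which does not vanish when $0<x_1<|ct+\xi|+L$ and is \emph{not} handled by the support assumption on $K$. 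The paper splits this into $I_1\le 0$ (using $2\phi(ct+\xi)\le\theta_0$) and $I_2\le C_0e^{(k_\phi+\lambda)(ct+\xi)}$, the latter requiring the refined expansion $|\phi(s)-C_\phi e^{\lambda s}|\le K_\phi e^{(k_\phi+\lambda)s}$; this is the \emph{only} place the hypothesis $\lambda_0<k_\phi$ is used, and your proof never touches $k_\phi$ at all — a sign that this term has been lost.

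Second, two of your remaining claims would fail as stated. For $x_1<0$ the convexity $\phi''\ge 0$ on $(-\infty,0]$ gives $\phi(y_1+ct+\xi)+\phi(-y_1+ct+\xi)\ge 2\phi(ct+\xi)=W^+(x,t)$, so the integral $\int_\Omega J(x-y)[W^+(y,t)-W^+(x,t)]dy$ is \emph{nonnegative}, the unfavorable sign; the paper instead bounds it above by $c\phi'(ct+\xi)-f(\phi(ct+\xi))+II_1+II_2$ and absorbs the result using the drift $2\dot\xi\phi'$, the condition $L\le 4c$ on the support of $J_1$, and again the $e^{(k_\phi+\lambda)(ct+\xi)}$ estimate. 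For the sub-solution on $0<x_1<-ct+\xi(t)$, the good term is $-\dot\xi\,[\phi'(z_+)-\phi'(z_-)]$, a \emph{difference} of derivatives that vanishes at $x_1=0$ exactly as fast as the interaction term $L_f\phi(z_-)[\phi(z_+)-\phi(z_-)]$ does; closing this requires the comparison $\phi'(\xi_1)-\phi'(\xi_2)\ge k_3[\phi(\xi_1)-\phi(\xi_2)]$ for $\xi_2<\xi_1<0$ (the paper's Lemma \ref{l3.3}, proved from $\phi''\ge 0$ and the exponential asymptotics), which your ``parallel decomposition with opposite sign'' remark does not supply. Without these three ingredients the choice of $M$ and $T_1$ cannot be made, so the proposal as written does not yield the proposition.
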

The proof of this lemma will be given in Appendix for the coherence of this paper.

Now we are in position to construct the entire solution. Let $u_n(x,t)$ be the unique solution of \eqref{1-1} for $t\geq-n$ with initial data
\[u_n(x,-n)=W^-(x,-n).\]
Since $W^-(x,t)$ is a sub-solution, it is not difficult show that $\{u_n(x,t)\}_{n=1}^\infty$ is a nondecreasing sequence with respect to $t$. Choose some constant $T^*>0$ such that $c>\dot{\xi}(t)$ for $t\leq-T^*$. In the following discussing, without any loss of generality, we assume that $n\geq T^*$. Then, we have
\begin{equation*}
\frac{\partial u_n(x,t)}{\partial t}=\int_\Omega J(x-y)[u_n(y,t)-u_n(x,t)]dy+f(u_n(x,t))
\end{equation*}
for $t\geq-n$ and $x\in\Omega$.
Since $\frac{\partial W^-(x,t)}{\partial t}=0$ for $x_1\leq0$ and
\[W^-_t(x,t)=(c-\dot{\xi}(t))(\phi'(x_1+ct-\xi(t))-\phi'(-x_1+ct-\xi(t)))\geq0\]
for $0<x_1\leq|ct-\xi(t)|$ and $t<-T^*$ duo to $\phi''(\xi)\geq0$ for $\xi\leq0$, it follows that
\begin{equation*}
\begin{split}
\frac{\partial u_n(x,-n)}{\partial t}&=\int_\Omega J(x-y)[u_n(y,-n)-u_n(x,-n)]dy+f(u_n(x,-n)) \\
    &\geq \frac{\partial W^-(x,-n)}{\partial t}\\
    &\geq0
\end{split}
\end{equation*}
for all $x_1\leq|cn+\xi(-n)|$.
Furthermore, by the comparison principle, $u_n(x,t)$ satisfies
\[\frac{\partial u_n(x,t)}{\partial t}>0~\text{for all}~x_1\leq|cn+\xi(-n)|,~0<u_n(x,t)<1~\text{for all}~t\geq-n,~x\in\Omega,\]
and
\[W^-(x,t)<u_n(x,t)<W^+(x,t)~\text{for all}~-n< t\leq T^*~\text{and}~x\in\Omega.\]
For each fixed $x\in\Omega$, since $\left\{u_n(x,t),\frac{\partial u_n(x,t)}{\partial t}\right\}_{n=1}^{+\infty}$ is well-defined for large $n$ and equicontinuous with $t$, there exists a subsequence, still denoted by $\left\{u_n(x,t),\frac{\partial u_n(x,t)}{\partial t}\right\}_{n=1}^{+\infty}$, such that
\begin{equation}\label{3-2}
\left(u_n(x,t),\frac{\partial u_n(x,t)}{\partial t}\right)\rightarrow(u(x,t),u_t(x,t))~\text{as}~n\rightarrow+\infty,
\end{equation}
where the convergence is locally uniform in $t\in\mathbb{R}$. Moreover, via diagonalization, define  $U:~\Omega\times\mathbb{R}\rightarrow L^\infty(\Omega;\mathbb{R})$ as the limit of
a subsequence for which \eqref{3-2} holds. Then we have
\[U_t(x,t)=\int_\Omega J(x-y)(U(y,t)-U(x,t))dy+f(U(x,t)),\]
and
\begin{equation*}
U_t(x,t)\geq0,~0\leq U(x,t)\leq1.
\end{equation*}
Besides, it follows from the definition of $W^-(x,t)$ that
\[\sup\limits_{x\in\Omega}|U(x,t)-\phi(x_1+ct)|\rightarrow0~\text{as}~t\rightarrow-\infty.\]
In particular, note that $U(x,t)$ is not a constant, by Corollary \ref{l1} there hold
\[U_t(x,t)>0~\text{and}~0<U(x,t)<1.\]
Furthermore, inspired by \cite{L2010}, we can show that $U(x,t)$ satisfies the following proposition.
\begin{proposition}\label{l2.4}
Let $U(x,t)$ be the entire solution in Theorem \ref{t2.1}. Then $U(x,t)$ satisfies
\begin{equation}\label{3-3}
|U(x+\eta,t)-U(x,t)|\leq M'\eta,
\end{equation}
and
\begin{equation}\label{3-4}
\left|\frac{\partial U(x+\eta,t)}{\partial t}-\frac{\partial U(x,t)}{\partial t}\right|\leq M''\eta
\end{equation}
for $M',~M''>0$.
\end{proposition}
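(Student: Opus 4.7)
The plan is to derive and then analyse a first-order ODE in $t$ for the spatial difference
\[v(x,t) := U(x+\eta,t) - U(x,t),\]
defined for $x, x+\eta \in \overline{\Omega}$. Subtracting the equation \eqref{1-1} at the two points and regrouping the convolution terms gives
\[v_t(x,t) = \bigl[F(x,t) - a(x+\eta)\bigr]\,v(x,t) + G(x,t,\eta),\]
where $a(y) := \int_{\Omega} J(y-z)\,dz$, $F(x,t) := \int_0^1 f'\bigl(U(x,t)+\sigma v(x,t)\bigr)\,d\sigma$, and the remainder is
\[G(x,t,\eta) := \int_{\Omega}\bigl[J(x+\eta-y) - J(x-y)\bigr]U(y,t)\,dy - U(x,t)\bigl[a(x+\eta)-a(x)\bigr].\]
Observe that no change of variables is needed: the integration domain $\Omega$ stays fixed, only the kernel translates.

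Two uniform bounds drive the argument. First, since $J \in C^1(\mathbb{R}^N)$ is compactly supported, $J$ is globally Lipschitz, hence so is $a$, and the integrand defining $G$ is pointwise bounded by $L_J|\eta|$ on a set of fixed finite measure; combined with $0 \le U \le 1$ this yields $|G(x,t,\eta)| \le C|\eta|$ for a constant $C$ independent of $(x,t)$. Second, and crucially, assumption $(F)$ gives $\max_{s\in[0,1]}f'(s) < \inf_{x\in\Omega}a(x)$ (the maximum is attained because $f \in C^{1,1}([0,1])$). Since $U(x,t)+\sigma v(x,t) \in [0,1]$ for every $\sigma \in [0,1]$, this forces the coercivity estimate
\[g(x,t) := F(x,t) - a(x+\eta) \le -\mu_0 < 0\]
for some fixed $\mu_0 > 0$ depending only on $f$ and $J$.

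With these two ingredients in hand, the ODE $v_t = g\,v + G$ is integrated with the integrating factor:
\[v(x,t) = v(x,t_0)\exp\!\Bigl(\int_{t_0}^t g(x,s)\,ds\Bigr) + \int_{t_0}^t G(x,s,\eta)\exp\!\Bigl(\int_s^t g(x,r)\,dr\Bigr)ds.\]
Because $U$ is an entire solution with values in $[0,1]$ we have $|v(x,t_0)| \le 1$, so the first term decays like $e^{-\mu_0(t-t_0)}$; sending $t_0 \to -\infty$ eliminates it and bounds the remaining integral by $C|\eta|/\mu_0$. This proves \eqref{3-3} with $M' := C/\mu_0$. The estimate \eqref{3-4} then follows directly from the ODE itself: $|U_t(x+\eta,t) - U_t(x,t)| = |v_t(x,t)| \le |g(x,t)|\,M'|\eta| + C|\eta| \le M''|\eta|$, with $M'' := (\|f'\|_{L^\infty([0,1])}+1)M' + C$.

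The main obstacle is establishing the uniform spectral gap $g(x,t) \le -\mu_0$; this is the nonlocal analogue of the linearised-stability bound used in \cite{BHM} and is exactly what assumption $(F)$ is tailored to deliver. A related but minor point is that the "$t_0 \to -\infty$" step crucially uses that $U$ is an entire solution with uniform bounds on $(-\infty,t]$, which is why Lipschitz continuity can be obtained globally rather than only after a time delay, paralleling the approach of \cite{L2010}.
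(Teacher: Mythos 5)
Your proof is correct, but it takes a genuinely different route from the paper's. The paper does not argue on the entire solution $U$ directly: it works with the approximating Cauchy solutions $u_n$ (with initial data $W^-(\cdot,-n)$ at time $-n$), sets $V=u_n(\cdot+\eta,\cdot)-u_n(\cdot,\cdot)$, derives the one-sided differential inequality $V_t\le L_1|\eta|-mV$ with $m=\inf_{u\in[0,1]}\bigl(\inf_{x\in\Omega}\int_\Omega J(x-y)\,dy-f'(u)\bigr)>0$, and compares with the scalar ODE $v'=L_1|\eta|-mv$, $v(-n)=M|\eta|$, where the initial bound $M\ge 2\sup|\phi'|$ comes from the Lipschitz continuity of the initial datum $W^-(\cdot,-n)$; the estimate for $U$ is then obtained by letting $n\to\infty$. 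You instead write the exact variation-of-constants formula for $v=U(\cdot+\eta,\cdot)-U(\cdot,\cdot)$ on $[t_0,t]$ and kill the initial-value term by sending $t_0\to-\infty$, using only $0\le U\le1$ and the uniform spectral gap $F(x,t)-a(x+\eta)\le-\mu_0$. Both arguments rest on the same two structural facts ($J\in C^1$ compactly supported gives $\int_\Omega|J(x+\eta-y)-J(x-y)|\,dy\le L_1|\eta|$, and assumption (F) gives the gap $\max_{[0,1]}f'<\inf_{x\in\Omega}\int_\Omega J(x-y)\,dy$), and the derivative bound \eqref{3-4} is read off from the equation in both cases. What your version buys is that it applies to any bounded entire solution with values in $[0,1]$, independently of how $U$ was constructed, and it dispenses with the Lipschitz initial data; what the paper's version buys is a bound for the approximants $u_n$ that is uniform in $n$ on $[-n,\infty)$, which is occasionally convenient elsewhere in the construction. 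One small point to make explicit in your write-up: the coercivity $g(x,t)\le-\mu_0$ uses $a(x+\eta)\ge\inf_{z\in\Omega}a(z)$, i.e.\ that $x+\eta\in\Omega$, which is exactly the standing hypothesis under which $v$ is defined.
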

\begin{proof}
Since $\int_{\mathbb{R}^N}J(x)dx=1,~J(x)\geq0$ and $J(x)$ is compactly supported, we have $J'\in L^1(\mathbb{R}^N)$. Furthermore, we get
\begin{equation*}
\int_{\Omega}|J(x+\eta)-J(x)|dx=\int_{\Omega}\int_0^1|\nabla J(x+\theta\eta)\eta|d\theta dx
\leq L_1|\eta|~\text{for some constant}~L_1>0.
\end{equation*}
 Let
 \[m=\inf\limits_{u\in[0,1]}\left(\inf\limits_{x\in\Omega}\int_\Omega J(x-y)dy-f'(u)\right)>0\]
  and $v(t)$ be a solution of the equation
\begin{align*}\left\{\begin{aligned}
&v'(t)=L_1|\eta|-mv(t)~\text{for any}~t>-n,\\
&v(-n)=M|\eta|
\end{aligned}
\right.
\end{align*}
for some $M\geq2\sup\limits_{\xi\in\mathbb{R}}|\phi'(\xi)|$.
In addition, denote $V(x,t)=u_n(x+\eta,t)-u_n(x,t)$, where $u_n(x,t)$ is the solution of \eqref{1-1} with initial value $u_n(x,-n)=W^-(x,-n)$. Then
\[V_t(x,t)\leq\int_\Omega[J(x+\eta-y)-J(x-y)][u_n(y,t)-u_n(x+\eta,t)]dy-\inf\limits_{x\in\Omega}\int_\Omega J(x-y)dyV(x,t)
+f'(\overline{V})V,\]
where $\overline{V}$ is between $u_n(x,t)$ and $u_n(x+\eta,t)$. Consequently, $V(x,t)$ satisfies
\[V_t(x,t)\leq L_1|\eta|-mV(t) ~\text{for}~t>-n ~\text{and}~V(x,-n)\leq M|\eta|.\]
Moreover, $|V(x,t)|\leq v(t)\leq M^*|\eta|$ for any $x\in\Omega,~t\geq-n$ and $M^*=M+\frac{L_1}{m}$.
Indeed,
\[0<v(t)=e^{-m(t+n)}M|\eta|+\frac{L_1\eta}{m}\left(1-e^{-m(t+n)}\right)
<\left(M+\frac{L_1}{m}\right)|\eta|<M^*|\eta|\]
 for any $x\in\mathbb{R}^N,~t\geq-n$. In particular, in view of $f'(s)<1$ for $s\in[0,1]$, there holds
\begin{equation*}\begin{split}
&\left|\frac{\partial u_n(x+\eta,t)}{\partial t}-\frac{\partial u_n(x,t)}{\partial t}\right|\\
\leq&\bigg|\int_{\Omega}[J(x+\eta-y)-J(x-y)]u_n(y,t)dy\bigg|
+\big|[u_n(x+\eta,t)-u_n(x,t)]\big|\\
&+\big|f'(\overline{V})[u_n(x+\eta,t)-u_n(x,t)]\big|\\
\leq&\int_\Omega\big|J(x+\eta-y)-J(x-y)\big|dy+(1+\max\limits_{s\in[0,1]}f'(s))\big|u_n(x+\eta,t)-u_n(x,t)\big|\\
\leq&[L_1+(1+\max\limits_{s\in[0,1]}f'(s))M^*]|\eta|.\\
\end{split}
\end{equation*}
At last, since $u_n(x,t)\rightarrow U(x,t)$ locally uniformly in $t\in\mathbb{R}$ as $n\rightarrow+\infty$, we have
\begin{align*}
|U(x+\eta,t)-U(x,t)|\leq&|U(x+\eta)-u_n(x+\eta,t)|+|u_n(x+\eta)-u_n(x,t)|+|u_n(x,t)-U(x,t)|\\
\leq&(M^*+2)|\eta|.
\end{align*}
Now take $M'=M^*+2$, we can prove the \eqref{3-3} and \eqref{3-4} by taking $M''=L_1+2+(1+\max\limits_{s\in[0,1]}f'(s))M^*$.
\end{proof}

\subsection{Uniqueness of the entire solution}
\noindent

The following result plays an important role in proving the uniqueness of the entire solution.
\begin{lemma}\label{l2.1}
Assume that the settings of Theorem \ref{th1-1} hold. Then for any $\varphi\in(0,\frac{1}{2}]$, there exist constants
$T_\varphi=T_\varphi(\varphi)>1$ and $K_\varphi=K_\varphi(\varphi)>0$ such that
 \[U_t(x,t)\geq K_\varphi~\text{for any}~t\leq-T_\varphi~\text{and}~x\in\Omega_{\varphi}(t),\]
 where
\[\Omega_{\varphi}(t)=\left\{x\in\Omega:~\varphi\leq U(x,t)\leq1-\varphi\right\}.\]
\end{lemma}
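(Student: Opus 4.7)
The plan is to exploit the uniform asymptotic $U(x,t)\to\phi(x_1+ct)$ as $t\to-\infty$ from Theorem \ref{t2.1} together with the equation for $U_t$ to compare $U_t(x,t)$ with $c\phi'(x_1+ct)$ on $\Omega_\varphi(t)$. Since $\phi'$ is strictly positive and continuous, it is bounded below on any bounded interval, which will yield the desired uniform positive lower bound.

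First I localize the moving set $\Omega_\varphi(t)$. By \eqref{3-1}, choose $T_1>0$ so large that $|U(x,t)-\phi(x_1+ct)|<\varphi/2$ for all $x\in\overline{\Omega}$ and $t\leq -T_1$. Then on $\Omega_\varphi(t)$ the value $\phi(x_1+ct)$ lies in $[\varphi/2,1-\varphi/2]$, so the strict monotonicity of $\phi$ gives $x_1+ct\in[a_\varphi,b_\varphi]$ for constants $a_\varphi<b_\varphi$ determined by $\varphi$ alone. In particular $x_1\geq a_\varphi-ct$, which forces $x$ arbitrarily far from the bounded obstacle $K$ as $t\to-\infty$. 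Picking $R>0$ with $K\cup\operatorname{supp}(J)\subset B_R(0)$, I can therefore find $T_2\geq T_1$ such that for every $t\leq -T_2$ and every $x\in\Omega_\varphi(t)$ the ball $x+\operatorname{supp}(J)$ is disjoint from $K$.

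The next step is the comparison. For such $(x,t)$ the integral $\int_\Omega J(x-y)[U(y,t)-U(x,t)]\,dy$ coincides with $\int_{\mathbb{R}^N}J(x-y)[U(y,t)-U(x,t)]\,dy$ (extending $U$ to $K$ by any bounded value is harmless, since $J$ vanishes where it matters). On the other hand, the traveling wave equation \eqref{1-3} combined with the definition $J_1=\int_{\mathbb{R}^{N-1}}J$ gives
\[c\phi'(x_1+ct)=\int_{\mathbb{R}^N}J(x-y)\bigl[\phi(y_1+ct)-\phi(x_1+ct)\bigr]\,dy+f(\phi(x_1+ct)).\]
Subtracting this from the equation $U_t(x,t)=\int_\Omega J(x-y)[U(y,t)-U(x,t)]\,dy+f(U(x,t))$ yields
\[U_t(x,t)-c\phi'(x_1+ct)=\int_{\mathbb{R}^N}J(x-y)\bigl\{[U(y,t)-\phi(y_1+ct)]-[U(x,t)-\phi(x_1+ct)]\bigr\}\,dy+f(U(x,t))-f(\phi(x_1+ct)),\]
whose absolute value is bounded by $(2+L_f)\sup_{y\in\overline\Omega}|U(y,t)-\phi(y_1+ct)|$, using $\int J=1$ and the Lipschitz bound $L_f$ on $f$. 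By \eqref{3-1} this error tends to $0$ as $t\to-\infty$.

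Setting $m_\varphi:=\min_{z\in[a_\varphi,b_\varphi]}\phi'(z)>0$, I enlarge $T_\varphi\geq T_2$ so that the error above is at most $cm_\varphi/2$ whenever $t\leq -T_\varphi$. Then $U_t(x,t)\geq cm_\varphi/2=:K_\varphi$ throughout $\Omega_\varphi(t)$, as required. The one substantive point is the geometric step: verifying that $x$ lies far outside $K+\operatorname{supp}(J)$ for $t$ very negative, since this is what allows the $\Omega$-integral to be replaced by the full-space integral that $\phi$ actually satisfies; the rest of the argument is just bookkeeping.
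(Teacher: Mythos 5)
Your proof is correct, and it takes a genuinely different route from the paper. The paper argues by contradiction and compactness: it assumes sequences $t_k$, $x^k\in\Omega_\varphi(t_k)$ with $U_t(x^k,t_k)\to0$, uses the Lipschitz estimates of Proposition \ref{l2.4} to extract a locally uniform limit of translates of $U$, and then rules out the limit by identifying it (via the asymptotics as $t\to-\infty$) with a shifted traveling wave, whose time derivative cannot vanish. Your argument is instead direct and quantitative: the same preliminary localization ($x_1+ct\in[a_\varphi,b_\varphi]$, hence $x_1\to+\infty$ on $\Omega_\varphi(t)$ as $t\to-\infty$, so the obstacle and the convolution decouple) followed by subtracting the planar wave equation \eqref{1-3} from the equation for $U$ and bounding the discrepancy by $(2+\mathrm{Lip}(f))\sup_{\overline\Omega}|U(\cdot,t)-\phi(\cdot_1+ct)|$, which tends to $0$ by \eqref{3-1}. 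This yields the explicit constant $K_\varphi=\tfrac{c}{2}\min_{[a_\varphi,b_\varphi]}\phi'$ and bypasses both the compactness machinery and the limit-identification step, which in the paper is stated rather tersely. The only points worth flagging are cosmetic: the constant you call $L_f$ should be a Lipschitz constant for $f$ on $[0,1]$ (which exists since $f\in C^{1,1}$), not the constant $L_f$ the paper reserves for the superadditivity estimate $|f(u+v)-f(u)-f(v)|\leq L_fuv$; and the geometric step does require $c>0$ so that $-ct\to+\infty$, which holds by hypothesis. Both approaches rely on the strict positivity of $\phi'$, which the paper guarantees through the two-sided exponential bounds in Section 2.3.
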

\begin{proof}
It is easy to choose $T_\varphi$ and $M_\varphi$ such that
$\Omega_{\varphi}(t)\subset\left\{x\in\Omega:~|x_1+ct|\leq M_\varphi\right\}\subset\{x\in\mathbb{R}^N:~x_1\geq1\}$.
Now suppose there exist sequences $t_k\in(-\infty,T_\varphi]$ and
  $x^k:=(x^k_1,x^k_2,...x^k_N)\in\Omega_{\varphi}(t)$ such that
\[U_t(t_k,x^k)\rightarrow0~\text{as}~k\rightarrow+\infty.\]
Here only two cases happen,
$t_k\rightarrow-\infty$ or $t_k\rightarrow t_*$ for some $t_*\in(-\infty,T_\varphi]$ as $k\rightarrow+\infty$.

For the former case, denote
\[U_k(x,t)=U(x+x^k,t+t_k).\]
By Lemma \ref{l2.4}, $\{U_k(x,t)\}_{k=1}^\infty$ is equicontinuous in $x\in\Omega$ and $t\in\mathbb{R}$. Furthermore,  there exists a  subsequence still denoted by $\{U_k(x,t)\}_{k=1}^\infty$ such that
 \[U_k\rightarrow U_*~\text{as}~k\rightarrow+\infty\]
 and $U_*$ satisfies $\frac{\partial U_*(0,0)}{\partial t}=0$. We further have
 \[\frac{\partial U_*(x,t)}{\partial t}\equiv0~\text{for}~t\leq0.\]
 However, this is impossible because
\[U_*(x,t)=\phi(x_1+ct+a)~\text{for some}~a\in[-M_\eta,M_\eta].\]

For the second case, $x^k_1$ remains bounded by the definition of
 $\Omega_{\varphi}(t)$.  Therefore, we assume that $x^k_1\rightarrow x^*_1$ as
  $k\rightarrow+\infty$ and let
\[U_k(x,t):=U(x+x^k,t).\]
Then, each $U_k(x,t)$ is defined for all
$(x,t)\in(-\infty,T_\varphi]\times\{x\in\mathbb{R}^N\mid x_1\geq-1\}$ by the definition of $\Omega_\varphi(t)$. Similarly, there exists a
 subsequence, again denoted by $\{U_k\}^{\infty}_{k=1}$, such that
 \[U_k\rightarrow U^*~\text{as}~k\rightarrow+\infty~\text{with}~x_1\geq-1\]
  for some function $U^*$ satisfies \eqref{1-1} on $\{x\in\mathbb{R}^N\mid x_1\geq-1\}\times(-\infty,T_\varphi]$. Note that $U_t(x,t)>0$, we have
\[\frac{\partial U^*}{\partial t}(0,t_*)=0,~\frac{\partial U^*}{\partial t}(x,t)\geq0
~\text{for}~(x,t)\in\{x\in\mathbb{R}^N\mid x_1\geq-1\}\times(-\infty,T_\varphi].\]
Then we obtain $\frac{\partial U^*}{\partial t}(x,t)\equiv0$ for $t\leq t_*$, but this is impossible since
\[U^*(x,t)-\phi(x_1+x^*_1+ct)\rightarrow0~\text{as}~t\rightarrow-\infty,~\text{uniformly in}
~{\{x\in\mathbb{R}^N\mid x_1\geq-1\}}.\]
This ends the proof.
\end{proof}
Now we are ready to show the uniqueness of the entire solution. Suppose that there exists another entire solution $V$ of \eqref{1-1} satisfying \eqref{3-1}.
Extend the function $f$ as
 \[f(s)=f'(0)s~\text{for}~s\leq0,\quad f(s)=f'(1)(s-1)~\text{for}~s\geq1\]
  and choose $\eta>0$ sufficiently small such that
\[f'(s)\leq-\omega~\text{for}~s\in[-2\eta,2\eta]\cup[1-2\eta,1+2\eta]~\text{and}~\omega>0.\]
Then for any $\epsilon\in(0,\eta)$ we can find $t_0\in\mathbb{R}$ such that
\begin{equation}\label{3-5}
\|V(\cdot,t)-U(\cdot,t)\|_{L^{\infty}(\Omega)}<\epsilon~\text{for}~-\infty<t\leq t_0.
\end{equation}
For each $t_0\in(-\infty,T_\varphi-\sigma\epsilon]$, define
\[\tilde{U}^+(x,t):=U(x,t_0+t+\sigma\epsilon(1-e^{-\omega t}))+\epsilon e^{-\omega t},\]
and
\[\tilde{U}^-(x,t):=U(x,t_0+t-\sigma\epsilon(1-e^{-\omega t}))-\epsilon e^{-\omega t},\]
where the constant $\sigma>0$ is specified later. Then by \eqref{3-5},
\begin{equation}\label{3-6}
\tilde{U}^-(x,0)\leq V(x,t_0)\leq \tilde{U}^+(x,0)~\text{ for}~ x\in\Omega.
\end{equation}
Next we show that $\tilde{U}^-,\tilde{U}^+$ are sub- and super-solutions in the time range
$t\in[0,T_\varphi-t_0-\sigma\epsilon].$
A straightforward calculation implies that
\begin{equation*}
\begin{split}
 \mathcal{L}\tilde{U}^+=&\sigma\epsilon\omega e^{-\omega t}U_t-\epsilon\omega e^{-\omega t}+f(U)-f(U+\epsilon e^{-\omega t})  \\
 =&\epsilon e^{-\omega t}(\sigma\omega U_t-\omega-f'(U+\theta\epsilon e^{-\omega t})),
\end{split}
\end{equation*}
where $0<\theta<1$.
For any $x\not\in\Omega_\eta(x,t_0+t+\sigma\epsilon(1-e^{-\omega t}))$, we can see
\[U+\theta\epsilon e^{-\omega t}\in[0,2\eta]\cup[1-\eta,1+\eta].\]
Consequently, $f'(U+\theta\epsilon e^{-\omega t})\leq-\omega$, which implies that
\[\mathcal{L}\tilde{U}^+\geq\epsilon e^{-\omega t}(-\omega+\omega)=0.\]
For $x\in\Omega_\eta(x,t_0+t+\sigma\epsilon(1-e^{-\omega t}))$, by Lemma \ref{l2.1}, there holds
\[\mathcal{L}\tilde{U}^+\geq\epsilon e^{-\omega t}(\sigma\omega K_\varphi-\omega-\max\limits_{0\leq s\leq1}f'(s)).\]
As a consequence, $\mathcal{L}\tilde{U}^+\geq0$ provided that $\sigma$ is sufficiently large.

Similarly, we can show $\mathcal{L}\tilde{U}^-\leq0$ in $\Omega\times[0,T_\varphi-t_0-\sigma\epsilon]$. In view of
 this and \eqref{3-6}, we see that
\[\tilde{U}^-(x,t)\leq V(x,t+t_0)\leq \tilde{U}^+(x,t)~\text{for all}~(x,t)\in\Omega\times[0,T_\varphi-t_0
-\sigma\epsilon].\]
Letting $t+t_0$ be replaced by $t$, the inequality above can be rewritten as
\[U(x,t-\sigma\epsilon(1-e^{-\omega(t-t_0)}))-\epsilon e^{-\omega(t-t_0)}\leq V(x,t)\leq U(x,t+\sigma\epsilon
(1-e^{-\omega(t-t_0)}))+\epsilon e^{-\omega(t-t_0)}\]
for all $(x,t)\in\Omega\times[t_0,T_\varphi-\sigma\epsilon]$ and $t_0\in(-\infty,T_\varphi-\sigma\epsilon]$.
 As $t_0\rightarrow-\infty$, we obtain that
\[U(x,t-\sigma\epsilon)\leq V(x,t)\leq U(x,t+\sigma\epsilon) ~\text{for all}~(x,t)\in\Omega\times
(-\infty,T_\varphi-\sigma\epsilon].\]
By the comparison principle, the inequality holds for $t\in\mathbb{R},~x\in\Omega$. Letting $\epsilon\rightarrow0$, we have
$V(x,t)\equiv U(x,t)$.

\section{Behavior far away from the interior domain}
\noindent

In this section, we are going to figure out what the entire solution, constructed in previous section, is like
 far away from the interior domain.
\begin{theorem}\label{t5.2}
Assume that $(F)$ and $(J)$ hold. Let $\phi$ be the unique solution of \eqref{1-3} with $c>0$ and $u(x,t)$ be a solution of
\begin{equation}\label{4-1}\left\{\begin{aligned}
&u_t(x,t)=\int_\Omega J(x-y)[u(y,t)-u(x,t)]dy+f(u),\quad x\in\Omega,~t\in\mathbb{R},\\
&0\leq u(x,t)\leq1,\quad x\in\Omega,~t\in\mathbb{R},
\end{aligned}\right.
\end{equation}
such that
\[\sup\limits_{x\in\bar{\Omega}}|u(x,t)-\phi(x_1+ct)|
\rightarrow0~\text{as}~t\rightarrow-\infty.\]
Then, for any sequence $(x'_n)_{n\in\mathbb{N}}\in\mathbb{R}^{N-1}$ such that $|x'_n|\rightarrow+\infty$ as
 $n\rightarrow+\infty$, there holds
\[u(x_1,x'+x'_n,t)\rightarrow\phi(x_1+ct)~\text{as}~n\rightarrow+\infty~\text{for}~t\in\mathbb{R},\]
locally uniformly with respect to $(x,t)=(x_1,x',t)\in\mathbb{R}^N\times\mathbb{R}$.
\end{theorem}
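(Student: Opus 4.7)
The plan is to translate the solution in the $x'$-direction, extract a locally uniformly convergent subsequence via the Lipschitz regularity of Proposition \ref{l2.4}, observe that the obstacle $K$ vanishes from view in the limit, and finally invoke a uniqueness (Liouville-type) argument to identify every subsequential limit with $\phi(x_1+ct)$.

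\textbf{Step 1 (compactness).} Set $u_n(x,t):=u(x_1,x'+x'_n,t)$. Since $K$ is bounded and $|x'_n|\to+\infty$, for any compact $A\subset\mathbb{R}^N$ we have $A+(0,x'_n)\subset\Omega$ for all sufficiently large $n$, so $u_n$ is eventually well defined on $A\times\mathbb{R}$. The Lipschitz estimate of Proposition \ref{l2.4} (whose proof depends only on assumption (J), the bound $0\le u\le1$, and the fact that $u(\cdot,t)$ is close to $\phi(\cdot+ct)$ for $t\to-\infty$, hence is asymptotically Lipschitz; one reruns the same Duhamel-type comparison started from $t_k\to-\infty$) applies verbatim to $u$, yielding uniform Lipschitz constants in $x$ for $u_n$. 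Using the equation \eqref{4-1} and the $L^1$-Lipschitz bound on $J$ then gives equicontinuity in $t$ as well. By Arzel\`a--Ascoli and a diagonal extraction, a subsequence $u_{n_k}\to u_\infty$ locally uniformly on $\mathbb{R}^N\times\mathbb{R}$, with $0\le u_\infty\le1$.

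\textbf{Step 2 (limit equation).} After the change of variable $y'\mapsto y'+x'_{n_k}$, the integral term for $u_{n_k}$ reads
\[
\int_{\Omega-(0,x'_{n_k})} J(x-y)\bigl[u_{n_k}(y,t)-u_{n_k}(x,t)\bigr]\,dy.
\]
Because $K$ is bounded and $J$ has compact support, for fixed $(x,t)$ the integrand is, for $k$ large, supported in $\mathbb{R}^N$ and the set $\Omega-(0,x'_{n_k})$ eventually contains the support of $J(x-\cdot)$. Dominated convergence (with bound $2\|J\|_\infty\mathbf{1}_{\operatorname{supp} J(x-\cdot)}$) yields
\[
\partial_t u_\infty(x,t)=\int_{\mathbb{R}^N}J(x-y)\bigl[u_\infty(y,t)-u_\infty(x,t)\bigr]\,dy+f(u_\infty(x,t))
\]
on $\mathbb{R}^N\times\mathbb{R}$. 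Moreover the hypothesis $\sup_{\bar\Omega}|u(\cdot,t)-\phi(\cdot_1+ct)|\to0$ as $t\to-\infty$ is translation-invariant in $x'$, so it transfers to each $u_{n_k}$ uniformly in $n_k$, and hence
\[
\sup_{x\in\mathbb{R}^N}\bigl|u_\infty(x,t)-\phi(x_1+ct)\bigr|\longrightarrow 0\quad\text{as }t\to-\infty.
\]

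\textbf{Step 3 (uniqueness in the whole space, and conclusion).} The main obstacle is to conclude $u_\infty(x,t)\equiv\phi(x_1+ct)$ on $\mathbb{R}^N\times\mathbb{R}$. For this one repeats the squeezing argument of Section 3.2 with $U$ replaced by the planar front $\Phi(x,t):=\phi(x_1+ct)$, which is itself an entire solution of the whole-space equation by \eqref{1-3}. The key ingredient is an analogue of Lemma \ref{l2.1}: on any strip $\{\varphi\le\Phi\le 1-\varphi\}$ one has $\Phi_t=c\phi'(x_1+ct)\ge cK_\varphi>0$ thanks to the strict monotonicity and the exponential tail estimates recalled in \eqref{2-4}. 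Then the super- and sub-solutions
\[
\Phi(x,t\pm\sigma\epsilon(1-e^{-\omega t}))\pm\epsilon e^{-\omega t}
\]
sandwich $u_\infty$ starting from any sufficiently negative time $t_0$, by the closeness proved in Step 2 and the sign of $f'$ near $0$ and $1$. Applying the whole-space comparison principle (which follows exactly as in Theorem \ref{t2.1'}, replacing $\Omega$ by $\mathbb{R}^N$), sending $t_0\to-\infty$ and then $\epsilon\to 0$ yields $u_\infty\equiv\Phi$. Since every subsequential limit coincides with $\phi(x_1+ct)$, the full sequence $u_n$ converges locally uniformly to $\phi(x_1+ct)$, proving the theorem.
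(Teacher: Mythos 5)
Your proposal is correct and follows essentially the same route as the paper: translate in $x'$, use the Lipschitz estimates of Proposition \ref{l2.4} plus Arzel\`a--Ascoli to pass to a whole-space limit equation, and then squeeze the limit between the Fife--McLeod-type sub- and super-solutions $\phi\bigl(x_1+ct\mp O(\epsilon)(1-e^{-\omega(t-t_0)})\bigr)\mp\epsilon e^{-\omega(t-t_0)}$ before sending $t_0\to-\infty$ and $\epsilon\to0$; this is exactly the paper's Lemma \ref{l4.2}. Your added remark on why Proposition \ref{l2.4} extends to an arbitrary solution of \eqref{4-1} (restarting the comparison from times tending to $-\infty$) is a point the paper glosses over, and is a welcome clarification rather than a deviation.
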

\begin{proof}
In order to prove this lemma we need to extend the function $f$ as
\[f(s)=f'(0)s~\text{for}~s\leq0,~f(s)=f'(1)(s-1)~\text{for}~s\geq1,\]
 and let $\eta>0$ be sufficiently small such that
 \[f'(s)\leq-\omega~\text{for}~s\in(-\infty,\eta]\cup
 [1-\eta,+\infty),~~\omega=\min\left(\frac{|f'(0)|}{2},\frac{|f'(1)|}{2}\right)>0.\]
  In addition, let $A>0$ be sufficiently large such that
   \[\phi(\xi)\leq\frac{\eta}{2}~\text{for}~\xi\leq-A,~~
  \phi(\xi)\geq 1-\frac{\eta}{2}~\text{for}~\xi\geq A.\]
   Denote $\delta=\min\limits_{[-A,A]}\phi'(\xi)>0$
 and take $T\in\mathbb{R}>0$ such that
 for any $\epsilon\in(0,\frac{\eta}{2})$, we have that
\[|u(x,t)-\phi(x_1+ct)|\leq\epsilon~\text{for all}~t\leq -T~\text{and}~x\in\Omega.\]

Now we are in position to show this lemma. Under the assumptions of Theorem \ref{t5.2}, let $(x'_n)_{n\in\mathbb{N}}\in\mathbb{R}^{N-1}$ be
 a sequence such that $|x'_n|\rightarrow+\infty$ as $n\rightarrow+\infty$. Meanwhile, denote
 $u_n(x,t)=u(x_1,x'+x'_n,t)$ for each $t\in\mathbb{R}$ and $x=(x_1,x')\in\Omega-(0,x'_n)$. Since
  $0\leq u\leq1$, $K$ is compact and $\left\{u_n(x,t),\frac{\partial u_n(x,t)}{\partial t}\right\}_{n=1}^{\infty}$ is equal-continuous by Proposition \ref{l2.4},
  then there exists a subsequence, still denoted by $\left\{u_n(x,t),\frac{\partial u_n(x,t)}{\partial t}\right\}_{n=1}^\infty$, such that
\[u_n(x,t)\rightarrow u(x,t),~\frac{\partial u_n(x,t)}{\partial t}\rightarrow u_t(x,t)~\text{as}~n\rightarrow+\infty\]
 locally uniformly in $\mathbb{R}^N\times\mathbb{R}$. Moreover, we get that
\begin{equation}\label{4-2}u_t=\int_{\mathbb{R}^N}J(x-y)[u(y,t)-u(x,t)]dy+f(u),~0\leq u(x,t)\leq1
~\text{for all}~(x,t)\in\mathbb{R}^N\times\mathbb{R},\end{equation}
since $J$ has compact support, $K$ is compact and $|x'_n|\rightarrow+\infty$ as $n\rightarrow\infty$. In addition, recall that
\[u_n(x,t)-\phi(x_1+ct)\rightarrow0~\text{as}~t\rightarrow-\infty\]
uniformly in $\overline{\Omega}$, the function $u(x,t)$ satisfies
\[|u(x,t)-\phi(x_1+ct)|\rightarrow0~\text{as}~t\rightarrow-\infty,~\text{locally uniformly in}~\mathbb{R}^N.\]

Now define two functions $\underline{u}(x,t)$ and $\overline{u}(x,t)$ as follows
\[\underline{u}(x,t)=\phi(\xi_-(x,t))-\epsilon e^{-\omega(t-t_0)},\overline{u}(x,t)=\phi(\xi_+(x,t))+\epsilon e^{-\omega(t-t_0)},~t\geq t_0,~x\in\mathbb{R}^N,\]
where
\[t_0\leq-T,~\xi_\pm(x,t)=x_1+ct\pm2\epsilon\|f'\|\delta^{-1}\omega^{-1}
\left[1-e^{-\omega(t-t_0)}\right].\]
Then the following lemma holds, whose proof is left to Appendix as a regular argument.
\begin{lemma}\label{l4.2}
The functions $\underline{u}(x,t)$ and $\overline{u}(x,t)$ are sub- and super-solutions to \eqref{4-2} for $t\geq t_0$, respectively.
\end{lemma}
By the comparison theorem and let $\epsilon\rightarrow0$, we have $u(x,t)\equiv\phi(x_1+ct)$. Since the limit is uniquely determined, the whole sequence $\{u_n(x,t)\}_{n\in\mathbb{N}}$ converges to $\phi(x_1+ct)$ locally uniformly in $(x,t)\in\mathbb{R}^N\times\mathbb{R}$ as $n\rightarrow+\infty$. Then Theorem \ref{t5.2} is completed.
\end{proof}
\begin{theorem}\label{t5.3}
Suppose that all the assumptions in Theorem \ref{t5.2} hold. Then the solution $u(x,t)$ of \eqref{4-1} satisfies
\[|u(x,t)-\phi(x_1+ct)|\rightarrow0~\text{as}~|x|\rightarrow+\infty,\]
 locally uniformly for $t\in\mathbb{R}$.
\end{theorem}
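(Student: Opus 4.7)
I would argue by contradiction, generalizing the compactness/shift method of Theorem~\ref{t5.2} to allow shifts whose $x_1$--component may also diverge. Suppose the conclusion fails: then there exist $\epsilon_0>0$, a compact interval $I\subset\mathbb{R}$, and sequences $x^n=(x^n_1,(x^n)')\in\overline{\Omega}$ with $|x^n|\to+\infty$ and $t_n\in I$ with $t_n\to t_*$, such that $|u(x^n,t_n)-\phi(x^n_1+ct_n)|\ge\epsilon_0$ for all $n$. Define the translate $u_n(x,t):=u(x+x^n,t)$ on $(\Omega-x^n)\times\mathbb{R}$. By Proposition~\ref{l2.4} and boundedness of $u,u_t$, the family $\{u_n\}$ is equicontinuous in $(x,t)$, so, up to a subsequence, $u_n\to\tilde{u}$ locally uniformly on $\mathbb{R}^N\times\mathbb{R}$. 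Because $K$ is compact and $|x^n|\to+\infty$, the hole $K-x^n$ escapes every compact set, and since $\operatorname{supp}(J)$ is compact the integrals $\int_{\Omega-x^n}J(x-y)[\cdots]dy$ converge to $\int_{\mathbb{R}^N}J(x-y)[\cdots]dy$ locally uniformly. Hence
\[\tilde{u}_t(x,t)=\int_{\mathbb{R}^N}J(x-y)[\tilde{u}(y,t)-\tilde{u}(x,t)]dy+f(\tilde{u}(x,t)),\quad (x,t)\in\mathbb{R}^N\times\mathbb{R}.\]

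\textbf{Identifying $\tilde{u}$ when $x^n_1$ stays finite.} Passing to a further subsequence, $x^n_1\to x_1^*\in\mathbb{R}\cup\{\pm\infty\}$. The standing hypothesis $|u(\cdot,t)-\phi(\cdot_1+ct)|\to 0$ uniformly in $\overline{\Omega}$ as $t\to-\infty$ rewrites as $|u_n(x,t)-\phi(x_1+x^n_1+ct)|\to 0$ uniformly in $x,n$ as $t\to-\infty$. If $x_1^*\in\mathbb{R}$, then for every fixed $(x,t_1)$ we have $\phi(x_1+x^n_1+ct_1)\to\phi(x_1+x_1^*+ct_1)$, so $\tilde{u}(x,t_1)-\phi(x_1+x_1^*+ct_1)\to 0$ as $t_1\to-\infty$, locally uniformly in $x$. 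A routine modification of Lemma~\ref{l4.2}, using as sub/super--solutions of the free equation $\phi(x_1+x_1^*+ct\pm2\epsilon\|f'\|\delta^{-1}\omega^{-1}[1-e^{-\omega(t-t_0)}])\pm\epsilon e^{-\omega(t-t_0)}$ and letting first $t_0\to-\infty$ and then $\epsilon\to 0$, forces $\tilde{u}(x,t)\equiv\phi(x_1+x_1^*+ct)$. Evaluating at $(0,t_n)$ gives $u(x^n,t_n)=u_n(0,t_n)\to\phi(x_1^*+ct_*)=\lim_n\phi(x^n_1+ct_n)$, contradicting $|u(x^n,t_n)-\phi(x^n_1+ct_n)|\ge\epsilon_0$.

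\textbf{Handling $x_1^*=\pm\infty$.} Consider $x_1^*=+\infty$. Fix $\delta\in(0,1-\theta_0)$ small and choose $t_1$ so negative that $|u_n(x,t_1)-\phi(x_1+x^n_1+ct_1)|\le\delta/3$ for all $n$ large. Since $\phi(x_1+x^n_1+ct_1)\to 1$ pointwise for each fixed $(x,t_1)$, passing $n\to\infty$ yields $\tilde{u}(x,t_1)\ge 1-\delta$ uniformly in $x$. The spatially constant function $z(t)$ solving $z'=f(z)$, $z(t_1)=1-\delta$, is a sub--solution of the free equation, and since $f'(1)<0$ and $1-\delta>\theta_0$ we have $z(t)\to 1$ as $t\to+\infty$. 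Comparison (Theorem~\ref{t2.1'} with $\Omega=\mathbb{R}^N$) gives $\tilde{u}(x,t)\ge z(t;t_1)$ for $t\ge t_1$; letting $t_1\to-\infty$, the right--hand side tends to $1$, so combined with $\tilde{u}\le 1$ we get $\tilde{u}\equiv 1$. Then $u(x^n,t_n)=u_n(0,t_n)\to 1 =\lim_n\phi(x^n_1+ct_n)$, a contradiction. The case $x_1^*=-\infty$ is symmetric, exploiting $f'(0)<0$ and $\delta<\theta_0$ to obtain $\tilde{u}\equiv 0$.

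\textbf{Main obstacle.} The delicate point is the identification of $\tilde{u}$ at finite times from the single piece of information that $\tilde{u}(\cdot,t)$ is close to a known profile as $t\to-\infty$. In the finite $x_1^*$ case this relies on the exponential-shift sub/super-solution construction of Lemma~\ref{l4.2} (which must be set up carefully so the initial sandwich at $t_0\to-\infty$ actually captures $\tilde{u}$), while in the $\pm\infty$ cases it reduces to spatially constant ODE comparison exploiting the bistability/stability of $0$ and $1$. A secondary technical check is that the translates $u_n$ actually yield the free equation in the limit, which uses only that $K$ and $\operatorname{supp}(J)$ are both compact while $|x^n|\to+\infty$.
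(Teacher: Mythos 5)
Your proposal is correct in substance, but it follows a genuinely different route from the paper. The paper argues directly, splitting $|x|\to+\infty$ into three regimes: for $x_1\geq N_1$ it uses the time-monotonicity $u_t>0$ of the (unique) entire solution together with $\phi(+\infty)=1$ to trap both $u$ and $\phi(x_1+ct)$ in $[1-\epsilon,1]$; for $x_1\leq -N_3$ it compares $u$ from above with a traveling front $\phi_\delta$ of the shifted nonlinearity $f_\delta(s)=f(s-\delta)$, forcing $u\leq 2\delta$ there on bounded time intervals; and for $|x'|\geq N_4$ with $x_1\in[-N_3,N_1]$ it invokes Theorem \ref{t5.2}. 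You instead run a single contradiction/compactness argument: translate by $x^n$, pass to a limit $\tilde u$ of the free-space equation (legitimate since $K$ and $\mathrm{supp}(J)$ are compact and $|x^n|\to\infty$), and identify $\tilde u$ as $\phi(x_1+x_1^*+ct)$, $1$, or $0$ according to the limit of $x_1^n$, via the Lemma \ref{l4.2} sandwich in the first case and spatially homogeneous ODE comparison in the other two. Your route avoids both the monotonicity of $u$ in $t$ and the auxiliary front $\phi_\delta$, at the cost of being non-quantitative, and it leans on the Lipschitz bounds of Proposition \ref{l2.4} for compactness exactly as the paper does in Theorem \ref{t5.2}. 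One small point to tighten: for a multistable $f$ the conditions ``$1-\delta>\theta_0$'' and ``$\delta<\theta_0$'' do not by themselves guarantee that $f>0$ on $[1-\delta,1)$ and $f<0$ on $(0,\delta]$ (there may be interior zeros of $f$ in $(0,\theta_0)$ or in $(\theta_0,1)$); you should choose $\delta$ small enough that these sign conditions hold, which is possible precisely because $f'(0)<0$ and $f'(1)<0$ — otherwise the ODE solution $z$ could stagnate at an intermediate zero and the identification $\tilde u\equiv 0$ or $\tilde u\equiv 1$ would fail.
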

\begin{proof}
Extending $f$ as in Theorem \ref{t5.2}. Then
define $f_\delta(u)=f(u-\delta)$, $(c_\delta,\phi_\delta)$ satisfies
\[c_\delta(\phi_\delta)'=\int_{\mathbb{R}^N}J(x-y)
[\phi_\delta(y)-\phi_\delta(x)]dy+f_\delta(\phi_\delta),\]
and
\[\phi_\delta(-\infty)=\delta,~\phi_\delta(+\infty)=1+\delta,\]
 where $c_\delta>0$ and $\delta>0$ is sufficiently small. Now we are going to show the theorem in three steps.

 Step 1. For $x_1\gg1$, since
\[\sup\limits_{x\in\bar{\Omega}}|u(x,t)-\phi(x_1+ct)|
\rightarrow0~\text{as}~t\rightarrow-\infty,\]
there exists some $T^*_1\geq0$ sufficiently large such that
\[|u(x,t)-\phi(x_1+ct)|\leq\frac{\epsilon}{2}~\text{for}~x\in\Omega~\text{and}~t\leq-T^*_1.\]
In particular, for any $x\in\Omega$, let $N_1\gg1$ such that
\[\phi(x_1-cT^*_1)\geq1-\frac{\epsilon}{2},~u(x,-T^*_1)\geq1-\epsilon ~\text{for}~x_1\geq N_1.\]
Since $\phi'>0,~u_t>0$, we have
\[|u(x,t)-\phi(x_1+ct)|<\epsilon~\text{for}~x_1\geq N_1~\text{and}~t\geq -T^*_1.\]

Step 2. For $x_1\ll-1$, let $\delta=\frac{1}{2}\epsilon$. Similar as first step, choose $T_2>0$ and $N_2\gg1$ such that
\[u(x,t)\leq \delta~\text{for}~x_1\leq-N_2~\text{and}~t\leq-T_2,~\text{particularly},~u(x,-T_2)\leq\delta.\]
Apparently, there exists $x_0\in\mathbb{R}$ such that $\phi_\delta(x_0)=1$. Then
\[u(x,-T_2)\leq\phi_\delta(x_1+x_0+N_2)~\text{for}~x\in\Omega.\]
Combining that $\phi_\delta(x,t)$ is increasing, we have
\[\phi_\delta(x_1+c_\delta(t+T_2)+x_0+N_2)
\geq\phi_\delta(-N_2+x_0+N_2)=1~\text{for}~t\geq-T_2~\text{and}~x_1\geq -N_2.\]
Applying the comparison principle on the region $t\geq-T_2$ and $x_1\leq-N_2$ implies that
\[u(x,t)\leq\phi_\delta(x_1+x_0+N_2+c_\delta(t+T_2))~\text{and}~x_1\leq-N_2
~\text{for}~t\geq-T_2.\]
In particular, since $\phi_\delta(-\infty)=\delta=\frac{1}{2}\epsilon$, there exists a $N_3\gg1$ such that for all $\tau\geq0$,
\[0\leq u(x,t)\leq2\delta<\epsilon,~0<\phi(x_1+ct)\leq\epsilon~\text{for all}~x_1\leq-N_3~\text{and}~\tau\geq t\geq-T_2,\]
which again shows
\[|u(x,t)-\phi(x_1+ct)|\leq\epsilon~\text{for}~x_1\leq-N_3,~t\leq\tau.\]

Step 3. For $|x'|\gg1$. From the Theorem \ref{t5.2}, choose $N_4$ such that
\[|u(x,t)-\phi(x_1+ct)|\leq\epsilon~\text{for}~t\in\mathbb{R}\]
holds whenever $|x'|>N_4$ and $x_1\in[-N_3,N_1]$. This finishes the proof.
\end{proof}

\section{The behavior for the large time}
\noindent

In this section we intend to investigate the behavior of the solution constructed in the previous as the time is large.
For this goal, we establish the following result.
\begin{theorem}\label{t6.1}
Assume that $(F)$ and $(J)$ hold. Let $\phi$ be the unique solution of \eqref{1-3} with $c>0$,
$t_0\in\mathbb{R}$ and $u(x,t)$ be a solution of
\begin{equation}\label{5-1}
\left\{
\begin{aligned}
  u_t(x,t)=\int_\Omega J(x-y)[u(y,t)-u(x,t)]dy+f(u),~& x\in\Omega,~t\in[t_0,+\infty),\\
  0\leq u(x,t)\leq1,~\qquad \qquad\qquad\qquad\qquad\qquad\qquad &  x\in\Omega,~t\in[t_0,+\infty).
\end{aligned}
\right.
\end{equation}
Assume that, for any $\epsilon>0$, there exists a $t_\epsilon\geq t_0$ and a compact set $K_\epsilon\subset\overline{\Omega}$ such that
\[|u(x,t_\epsilon)-\phi(x_1+ct_\epsilon)|\leq\epsilon~\text{for all}~x\in\overline{\Omega\backslash K_\epsilon}\]
and
\[u(x,t)\geq1-\epsilon~\text{for all}~t\geq t_\epsilon~\text{and}~x\in\partial\Omega=\partial K.\]
Then
\[\sup\limits_{x\in\overline{\Omega}}|u(x,t)-\phi(x_1+ct)|\rightarrow0~\text{as}~t\rightarrow+\infty.\]
\end{theorem}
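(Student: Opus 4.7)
My plan is to trap $u(x,t)$ between a super-solution $\overline{u}$ and a sub-solution $\underline{u}$, each designed to converge to $\phi(x_1+ct)$ as $t\to+\infty$, and then apply the comparison principle (Corollary~\ref{l1}). The super- and sub-solutions will be the time-shifted perturbations of $\phi$ already used in Lemma~\ref{l4.2}, but now they must be made compatible with the exterior geometry and with the boundary hypothesis on $\partial K$.

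For the setup I would extend $f$ to $\mathbb{R}$ as in the earlier sections and choose $\eta,\omega_0>0$ with $f'(s)\leq-\omega_0$ on $(-\infty,2\eta]\cup[1-2\eta,+\infty)$, together with $\delta_0:=\inf\{\phi'(z):\phi(z)\in[\eta,1-\eta]\}>0$. Given $\epsilon\in(0,\eta)$, applying the hypothesis produces $t_\epsilon$ and $K_\epsilon$; since $K_\epsilon$ is bounded in $x_1$, I would arrange (by invoking the hypothesis at a sufficiently small parameter if necessary) that $\phi(x_1+ct_\epsilon)\geq 1-\epsilon$ for every $x\in K_\epsilon$. This enlargement forces $u(\cdot,t_\epsilon)\leq\phi(\cdot+ct_\epsilon)+\epsilon$ pointwise on $\overline{\Omega}$: the hypothesis gives it on $\overline{\Omega\setminus K_\epsilon}$, and on $K_\epsilon\cap\Omega$ one has $\phi+\epsilon\geq 1\geq u$.

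For the super-solution I would take
\[\overline{u}(x,t)=\phi\bigl(x_1+ct+\sigma\epsilon(1-e^{-\omega(t-t_\epsilon)})\bigr)+\epsilon e^{-\omega(t-t_\epsilon)}\]
with $\omega\in(0,\min(\omega_0,\mu c))$ and $\sigma>0$ fixed large. Using the traveling-wave identity $c\phi'(z)=\int_{\mathbb{R}^N}J(x-y)[\phi(y_1+s)-\phi(x_1+s)]\,dy+f(\phi(z))$, a direct computation yields
\[\mathcal{L}\overline{u}=\xi_+'(t)\phi'(z)+q'(t)+\bigl[f(\phi(z))-f(\phi(z)+q(t))\bigr]+R(x,t),\]
where $R(x,t):=\int_K J(x-y)[\phi(y_1+ct+\xi_+)-\phi(x_1+ct+\xi_+)]\,dy$ is the extra term from the missing convolution over $K$. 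Since $J$ has compact support and $K$ is bounded, $R$ is supported in a bounded neighborhood of $K$, and the exponential tail $\phi'(z)\leq Ce^{-\mu z}$ for $z\gg1$ yields $|R(x,t)|\leq C'e^{-\mu ct}$ there, which is negligible for $t_\epsilon$ large. The standard tail-versus-bulk case split on $\phi(z)$ then forces $\mathcal{L}\overline{u}\geq 0$ once $\sigma$ is large, and comparison gives $u\leq\overline{u}$ on $\Omega\times[t_\epsilon,+\infty)$, whence $\limsup_{t\to\infty}\sup_{\overline{\Omega}}(u-\phi(x_1+ct))\leq C\epsilon$.

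The symmetric candidate $\underline{u}(x,t)=\phi(x_1+ct-\sigma\epsilon(1-e^{-\omega(t-t_\epsilon)}))-\epsilon e^{-\omega(t-t_\epsilon)}$ satisfies $\mathcal{L}\underline{u}\leq 0$ by the parallel computation, but I expect the main obstacle to be the initial inequality $\underline{u}(\cdot,t_\epsilon)\leq u(\cdot,t_\epsilon)$ on $K_\epsilon\cap\Omega$: there $\underline{u}(\cdot,t_\epsilon)=\phi(\cdot+ct_\epsilon)-\epsilon\approx 1-\epsilon$, while the hypothesis only guarantees $0\leq u\leq 1$. To bypass this, I would first prove an intermediate propagation lemma: there is $T_\epsilon\geq t_\epsilon$ with $u(x,t)\geq 1-2\epsilon$ for every $x\in K_\epsilon\cap\Omega$ and $t\geq T_\epsilon$. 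This step would combine the hypothesis $u\geq 1-\epsilon$ on $\partial K$, the super-solution bound just obtained (which forces $u$ close to $\phi\approx 1$ on $\overline{\Omega\setminus K_\epsilon}$ for large $t$), and the positivity of $f$ on $(\theta_0,1)$ in the bistable regime, by comparing $u$ from below with an auxiliary sub-solution built on the exponential tails of $\phi$ and propagated across the bounded annular region $K_\epsilon\cap\Omega$ starting from the high values on $\partial K$ and on $\partial K_\epsilon\cap\Omega$. Once $T_\epsilon$ is produced, restarting $\underline{u}$ at time $T_\epsilon$ yields a valid initial inequality throughout $\overline{\Omega}$, and comparison gives $\liminf_{t\to\infty}\inf_{\overline{\Omega}}(u-\phi(x_1+ct))\geq 0$, which together with the super-solution bound completes the proof after letting $\epsilon\to 0$. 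The delicate point is precisely this intermediate propagation step, since nonlocal sub-solutions near the obstacle must be constructed carefully to remain compatible with the full convolution over $\Omega$ cut by $K$.
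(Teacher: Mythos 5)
Your overall strategy (Fife--McLeod type sub- and super-solutions $\phi(x_1+ct\pm\xi(t))\pm\epsilon e^{-\omega(t-t_\epsilon)}$ plus comparison) is the right family, and you correctly locate the difficulty in the initial comparison on the compact set $K_\epsilon$. But the proposal has a genuine gap precisely there, on both sides. For the super-solution you ``arrange'' $\phi(x_1+ct_\epsilon)\geq1-\epsilon$ on $K_\epsilon$; this does not follow from the hypotheses, since $t_\epsilon$ and $K_\epsilon$ are produced together and you are not free to enlarge $t_\epsilon$ while keeping the estimate $|u(\cdot,t_\epsilon)-\phi(\cdot+ct_\epsilon)|\leq\epsilon$ outside the same $K_\epsilon$ (nor to shrink $K_\epsilon$). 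For the sub-solution you replace the missing initial inequality by an ``intermediate propagation lemma'' ($u\geq1-2\epsilon$ on $K_\epsilon\cap\Omega$ for $t\geq T_\epsilon$) which you do not prove; this lemma is not a routine step but essentially the crux of the obstacle problem (it is an invasion/Liouville-type statement of the kind that, in general exterior domains, can fail and in the paper is only obtained from the convexity of $K$ via the results of Brasseur--Coville--Hamel--Valdinoci, which are invoked to \emph{verify} the hypotheses of this theorem, not inside its proof). Moreover your sketch of that lemma leans on the claim that the super-solution bound ``forces $u$ close to $\phi\approx1$'' away from $K_\epsilon$ — but a super-solution gives an upper bound on $u$, not the lower bound you need there.

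The paper resolves the initial comparison differently, and this is the idea your proposal is missing: it augments the phase with a transverse, time-decaying correction $\theta(x',t)=\beta t^{-\alpha}e^{-|x'|^2/(\gamma t)}$, taking $u^\mp=\phi\bigl(x_1+c(t-1+t_\epsilon)\mp\theta(x',t)\mp Z(t)\bigr)\mp z(t)$. Since $K_\epsilon$ is compact, $|x'|$ and $x_1$ are bounded there, so choosing $\beta$ large pushes the sub-solution at the initial time below $\min_{K_\epsilon}u(\cdot,t_\epsilon)>0$ (only positivity of $u$ on $K_\epsilon$ is used, not closeness to $1$) and the super-solution above $1\geq u$; outside $K_\epsilon$ the hypothesis $|u-\phi|\leq\epsilon$ together with $z(1)\geq\epsilon$ closes the comparison. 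Both $\theta(x',t)$ and $z(t)$ tend to $0$ and $Z(\infty)=K_z\int_0^\infty z$ is $O(\epsilon_1)$, so the barriers collapse onto $\phi(x_1+ct)$ as $t\to+\infty$ and then $\epsilon\to0$. The price is the more delicate verification of $\mathcal{L}u^\mp\lessgtr0$ (the $t^{-\alpha-1}$ error terms and the auxiliary function $z$ with $z'\geq-\eta_z z$, $z\gtrsim t^{-3/2}$, $\int z<\infty$), which occupies Sections 5.1--5.3. Without either this device or a genuine proof of your propagation lemma, the argument does not close.
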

The most important ingredient of the proof is to construct suitable sub- and super-solutions. This progress is such cumbersome that  will be divided several parts.
\subsection{Sub-solution}
\noindent

In this part, we construct a sub-solution to \eqref{5-1}. Now define
\[\tilde{u}(x,t)=u(x,t-1+t_\epsilon), ~u^-(x,t)=\phi(x_1+c(t-1+t_\epsilon)-\theta(x',t)-Z(t))-z(t),\]
 where $\theta(x',t)=\beta t^{-\alpha}e^{\frac{-|x'|}{\gamma t}}$, $Z(t)=K_z\int_0^tz(\tau)d\tau$ and $\gamma>1$ is some constant, $K_z>0$ is large enough. Meanwhile, $z(t)$ will be defined in Theorem \ref{l2.2} later.
It follows from the definition of $u^-(x,t)$ that
\begin{align*}
u^-(x,1)\leq\phi(x_1+ct_\epsilon-\beta e^{\frac{-|x'|}{\gamma}}
-Z(1))\leq\tilde{u}(x,1)=u(x,t_\epsilon)
\end{align*}
for $x\in K_\epsilon.$ Thanks to $\phi(-\infty)=0,~\phi'>0$, $0<u(x,t)<1$ and $\min\limits_{x\in K_\epsilon}u(x,t)>0$, the last inequality holds provided that $\beta$ is sufficiently large. If $x\in\mathbb{R}^N\backslash K_\epsilon$, then
\[u^-(x,1)\leq\phi(x_1+ct_\epsilon)-z(1)\leq\phi(x_1+ct_\epsilon)-\epsilon\leq u(x,t_\epsilon)=\tilde{u}(x,1),\]
by $z(1)\geq\frac{1}{2}\epsilon_1=\epsilon$ (from Remark \ref{r1}).
As a consequence, $u^-(x,1)\leq\tilde{u}(x,1)$ for any $x\in\overline{\Omega}$.
\begin{lemma}\label{l5.1}
The inequality $\mathcal{L}u^-(x,t)\leq0$ holds for $x\in\Omega$ and $t\geq1$, where
\[\mathcal{L}u^-(x,t)=u^-_t(x,t)-\int_\Omega J(x-y)[u^-(y,t)-u^-(x,t)]dy-f(u^-).\]
\end{lemma}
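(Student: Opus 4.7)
The plan is to write $\mathcal{L}u^-$ as an explicit expression by exploiting the traveling-wave equation \eqref{1-3} for $\phi$, and then to show that the negative feedback coming from the phase shift $Z(t)$ with $Z'(t) = K_z z(t)$ dominates all the remaining error terms once $K_z$, $\gamma$, and the parameters of $z$ in Theorem~\ref{l2.2} are chosen appropriately.

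Setting $\xi := x_1+c(t-1+t_\epsilon)-\theta(x',t)-Z(t)$ so that $u^-=\phi(\xi)-z(t)$, and introducing the $x'$-frozen auxiliary $\tilde\eta(y):=y_1+c(t-1+t_\epsilon)-\theta(x',t)-Z(t)$, first I would compute
\begin{align*}
u^-_t=\phi'(\xi)\bigl(c-\theta_t(x',t)-K_z z(t)\bigr)-z'(t)
\end{align*}
and decompose, with $\eta(y):=y_1+c(t-1+t_\epsilon)-\theta(y',t)-Z(t)$,
\begin{align*}
\int_\Omega J(x-y)\bigl[\phi(\eta(y))-\phi(\xi)\bigr]dy
&=\int_{\mathbb{R}^N} J(x-y)\bigl[\phi(\tilde\eta(y))-\phi(\xi)\bigr]dy\\
&\quad+\underbrace{\int_\Omega J(x-y)\bigl[\phi(\eta(y))-\phi(\tilde\eta(y))\bigr]dy}_{=:E_1}\\
&\quad-\underbrace{\int_K J(x-y)\bigl[\phi(\tilde\eta(y))-\phi(\xi)\bigr]dy}_{=:E_2}.
\end{align*}
Radial symmetry of $J$ and the definition of $J_1$ convert the first integral on the right into the one-dimensional convolution in \eqref{1-3} evaluated at $\xi$, giving $c\phi'(\xi)-f(\phi(\xi))$. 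Substituting into $\mathcal{L}u^-=u^-_t-(\text{NLterm})-f(u^-)$ and cancelling the $c\phi'(\xi)$ contributions yields
\begin{align*}
\mathcal{L}u^-=-\phi'(\xi)\bigl[K_z z(t)+\theta_t(x',t)\bigr]-z'(t)+\bigl[f(\phi(\xi))-f(\phi(\xi)-z(t))\bigr]+E_2-E_1.
\end{align*}

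Next I would bound the two error integrals. For $E_1$ the gradient bound $|\nabla_{x'}\theta(x',t)|\le\theta(x',t)/(\gamma t)$ combined with the compact support of $J$ and $\|\phi'\|_\infty<\infty$ gives $|E_1|\le C_1\,\theta(x',t)/(\gamma t)$, which is small for $\gamma$ large and $t\ge 1$. For $E_2$ the integrand vanishes unless $\mathrm{dist}(x,K)\le R$ where $R$ is the radius of $\mathrm{supp}\,J$; in this strip, $K\subset\{x_1\le 0\}$ together with the choice of $t_\epsilon$ from the hypothesis of Theorem~\ref{t6.1} forces $\xi$ to be large positive, so both $\phi(\xi)$ and $\phi(\tilde\eta(y))$ lie in $[1-\eta,1]$ and the exponential decay estimate in \eqref{2-4} gives $|E_2|\le C_2 e^{-\mu\xi}$. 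Finally I split the analysis into two regions according to the size of $\phi'(\xi)$: in the bulk $\{\xi\in[-A,A]\}$ one has $\phi'(\xi)\ge\delta_0>0$, so $-K_z\delta_0 z(t)$ absorbs $|E_1|+|E_2|+|\theta_t|\|\phi'\|_\infty$, the Lipschitz estimate $|f(\phi)-f(\phi-z)|\le Lz$, and $|z'(t)|$ as soon as $K_z$ is chosen large; in the tails $\{|\xi|>A\}$, $\phi(\xi)$ lies where $f'\le-\omega<0$, so $f(\phi(\xi))-f(\phi(\xi)-z(t))\le-\omega z(t)$, and this together with the differential inequality that $z$ satisfies by construction in Theorem~\ref{l2.2} and the smallness of $\phi'(\xi)$ in the tails closes the estimate.

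The main obstacle will be simultaneously matching the three error contributions ($\theta_t$, $E_1$, and $E_2$) against the two sources of negativity ($-K_z z(t)\phi'(\xi)$ in the bulk and $-\omega z(t)$ in the tails) through a single uniform choice of $K_z,\gamma,\alpha,\beta$, and gluing the two regional estimates consistently across $\xi=\pm A$. In particular, unlike $E_1$, the hole correction $E_2$ does not vanish uniformly in space and can be controlled only by genuinely exploiting both the compact support of $J$ and the one-sided containment $K\subset\{x_1\le 0\}$, which together guarantee that $\xi$ is large (hence $\phi(\xi)\approx 1$) wherever the integrand is supported.
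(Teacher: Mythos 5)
Your proposal follows essentially the same route as the paper: cancel the leading terms through the traveling-wave equation \eqref{1-3}, isolate the transverse-modulation error and the hole correction, and close via the dichotomy $\phi(\xi)\in[\eta,1-\eta]$ (where $\phi'(\xi)\geq\tau_0$ and $-K_zz(t)\phi'(\xi)$ dominates for $K_z$ large) versus $\phi(\xi)\in[0,\eta]\cup[1-\eta,1]$ (where $f'\leq-\sigma$ yields $-\sigma z(t)$, and the $O(t^{-\alpha-1})$ errors are beaten by $z(t)\gtrsim t^{-3/2}$ since $\alpha+1>3/2$). The one substantive divergence is the hole term: the paper asserts $\int_KJ(x-y)[\phi(\xi(y,t))-\phi(\xi(x,t))]dy\geq-C^Kt^{-\alpha-1}\phi'(\xi(x,t))$ ``similarly as previous,'' which the mean-value argument does not literally give (there the gain of $t^{-\alpha-1}$ came from $|\theta(x',t)-\theta(y',t)|=O(t^{-\alpha-1})$, whereas here $|y_1-x_1|=O(1)$), while you exploit that the integrand is supported in an $R$-neighborhood of $K$, where $\xi$ is large positive because the front has already passed, making the term exponentially small; your version is the more defensible one, at the price of making explicit that $t_\epsilon$ must be large. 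Two points to tighten: your bound $|E_1|\leq C_1\theta(x',t)/(\gamma t)$ drops the factor $\phi'(\xi)$ that the paper retains, so in the tail region you must rely on $\gamma$ large (reconcile this with the choice of $\beta$ needed for the ordering $u^-(\cdot,1)\leq\tilde u(\cdot,1)$); and in the bulk case $E_2$ should be dismissed as identically zero there (the set $\{|\xi|\leq A\}$ is disjoint from the $R$-neighborhood of $K$ once $t_\epsilon$ is large) rather than ``absorbed'' by $-K_z\delta_0 z(t)$, since $e^{-\mu\xi}$ is of order one when $|\xi|\leq A$ and cannot be controlled by $z(t)\to0$.
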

\begin{proof}
Since $u^-(x,t)=\phi(\xi(x,t))-z(t)$, where $\xi(x,t)=x_1+c(t-1+t_\epsilon)
-\beta t^{-\alpha}e^{\frac{-|x'|^2}{t\gamma}}-Z(t)$, we have
\[u^-_t(x,t)=\phi'(\xi(x,t))(c-\theta_t(x',t)-Z')-z'(t),\]
and
\[\int_\Omega J(x-y)[u^-(y,t)-u^-(x,t)]dy=\int_\Omega J(x-y)[\phi(\xi(y,t))-\phi(\xi(x,t))]dy.\]
Denote $\mathcal{D}\phi=\int_{\mathbb{R}^N}J(x-y)[\phi(\xi(y,t))-\phi(\xi(x,t))]dy$. Then, applying mean value theorem, we get that
\begin{align*}
\mathcal{D}\phi
=&\int_{\mathbb{R}^N}J(y)[\phi(\xi(x,t)-y_1)-\phi(\xi(x,t))]dy+\int_{\mathbb{R}^N}J(y)
[\phi(x_1-y_1+c(t-1+t_\epsilon)\\
&-\beta t^{-\alpha}e^{\frac{-|x'-y'|^2}{t\gamma}})-\phi(\xi(x,t)-y_1)]dy\\
\geq&c\phi'(\xi(x,t))-f(\phi(\xi(x,t)))
-C^0\phi'(\xi(x,t))\beta t^{-\alpha}
\int_{\mathbb{R}^N}J(y)|y'|\frac{2|x'-\tilde{\theta}y'|}{t\gamma}
e^{-\frac{|x'-\tilde{\theta}y'|^2}{t\gamma}}dy,
\end{align*}
where $0<\tilde{\theta}<1$. In particular,
\[\mathcal{D}\phi\geq c\phi'(\xi(x,t))-f(\phi(\xi(x,t)))-C't^{-\alpha-1}\phi'(\xi(x,t)).\]
Therefore, we have
\begin{equation*}
\begin{split}
\mathcal{L}u^-(x,t)=&u^-_t(x,t)-\int_\Omega J(x-y)[u^-(y,t)-u^-(x,t)]dy-f(u^-)\\
\leq&f(\phi(\xi(x,t)))-f(\phi(\xi(x,t))-z(t))+(C't^{-\alpha-1}-Z'(t)-\theta_t(x',t))\phi'(\xi(x,t))\\
&+\int_KJ(x-y)[\phi(\xi(y,t))-\phi(\xi(x,t))]dy-z'(t).
\end{split}
\end{equation*}
Similarly as previous, one can get
\[\int_KJ(x-y)[\phi(\xi(y,t)-\phi(\xi(x,t))]\geq-C^Kt^{-\alpha-1}\phi'(\xi(x,t))~\text{for some}~C^K>0.\]
It follows that
\begin{align*}
\mathcal{L}u^-(x,t)\leq& f(\phi(\xi(x,t)))-f(\phi(\xi(x,t))-z(t))\\
&+\left[(C'+C^K)t^{-\alpha-1}-Z'(t)-\theta_t(x',t)
\right]\phi'(\xi(x,t))-z'(t).
\end{align*}
Now we go further to show $\mathcal{L}u^-(x,t)\leq0$ in two cases.

Case 1. We assume that $|\xi(x,t)|\gg1$ such that $\phi(\xi(x,t))\in[0,\eta]\cup[1-\eta,1]$, where $\eta$ is sufficiently small  to ensure that $f'(s)\leq-\sigma<0$ for any $s\in[0,\eta]\cup[1-\eta,1]$ with $\sigma>2\eta_z$. Then, since the function $z(t)$ constructed in Theorem \ref{l2.2} satisfies  \[z'(t)\geq-\eta_zz(t),\quad z(t)\geq K_0(1+t-t_1)^{-\frac{3}{2}}~\text{for}~t_1\geq0,\]
there holds
\begin{align*}
\mathcal{L}u^-(x,t)\leq&(-\sigma+\eta_z)z(t)-\phi'(\xi(x,t))\left[K_zz(t)+\left(\frac{|x'|^2}{\gamma t}-\alpha\right)t^{-1}\theta(x',t)-(C'+C^{K})t^{-\alpha-1}\right]\\
\leq&-\eta_zK_0t^{-\frac{3}{2}}+(\alpha\beta+C^K+C') t^{-\alpha-1}\phi'(\xi(x,t))\\
\leq&-\frac{1}{2}\eta_zt^{-\frac{3}{2}}.
\end{align*}
Indeed, since $|\xi(x,t)|$ is sufficiently large such that $(\alpha\beta+C^K+C')\phi'(\xi)\leq\frac{1}{2}\eta_zK_0$ and $\frac{1}{2}<\alpha<1$, the last inequality above holds obviously.

Case 2. We know that $\phi(\xi(x,t))\in[\eta,1-\eta]$. Since $\phi'(\xi)>0$ for all $\xi\in\mathbb{R}$, we may choose $\tau_0>0$ sufficiently small such that $\phi'(\xi)\geq\tau_0>0$. Denote $\max\limits_{[\eta,1-\eta]}f'(s)=\delta^0>0$. Then
\begin{align*}
\mathcal{L}u^-(x,t)\leq&\delta^0z(t)+\eta_zz(t)-\left[K_zz(t)+\left(\frac{|x'|^2}{t\gamma}
-\alpha\right)t^{-1}\theta(x',t)-(C^K+C')t^{-\alpha-1}\right]\phi'(\xi(x,t))\\
\leq&(-K_z\tau_0+\delta^0+\eta_z)z(t)+(\alpha\beta+C^K+C') t^{-\alpha-1}\phi'(\xi(x,t))\\
\leq&\left[-K_z\tau_0+\delta^0+\eta_z+(\alpha\beta+C^K+C')\frac{1}{K_0}\|\phi'\|_\infty\right]z(t).
\end{align*}
Let $K_z$ be sufficiently large such that  $-K_z\tau_0+\delta^0+\eta_z+(\alpha\beta+C^K+C')\frac{1}{K_0}\|\phi'\|_\infty
\leq-\frac{1}{2}\eta_z$. One hence have that $\mathcal{L}u^-(x,t)\leq-\frac{1}{2}\eta_zz(t)$. The proof is finished.
\end{proof}
\subsection{Super-solution}
\noindent

This part is devoted to the super-solution, defined as
 \[u^+(x,t)=\phi(\psi(x,t))+z(t),\]
  where
\[\psi(x,t)=x_1+c(t-1+t_\epsilon)+\theta^1(x',t)+Z(t),~\theta^1(x',t)=\beta^+t^{-\alpha^+}
e^{-\frac{|x'|^2}{t\gamma}}.\]
 Then
\[u^+_t(x,t)=(c+\theta^1_t+Z'(t))\phi'(\psi)+z'(t),\]
and
\begin{equation*}\begin{split}
&\int_\Omega J(x-y)[u^+(y,t)-u^+(x,t)]dy\\
=&\int_{\mathbb{R}^N}J(x-y)[\phi(\psi(y,t))-\phi(\psi(x,t))]dy-\int_KJ(x-y)[\phi(\psi(y,t))-\phi(\psi(x,t))]dy.
\end{split}
\end{equation*}
Note that
\[c\phi'(\psi)=\int_{\mathbb{R}^N}J(y)[\phi(\psi(x,t)-y_1)-\phi(\psi(x,t))]dy+f(\phi),\]
we have
\[u^+_t(x,t)=(\theta'_t+Z'(t))\phi'(\psi(x,t))+\int_{\mathbb{R}^N}J(y)[\phi(\psi(x,t)-y_1)
-\phi(\psi(x,t))]dy+f(\phi)+z'(t).\]
Meanwhile,
\begin{align*}
\mathcal{L}u^+(x,t)
=&(\theta'_t+Z'(t))\phi'(\psi(x,t))+\int_{\mathbb{R}^N}J(y)[\phi(\psi(x,t)-y_1)
-\phi(\psi(x,t))]dy+f(\phi)+z'(t)\\
&-\int_{\mathbb{R}^N}J(x-y)[\phi(\psi(y,t))-\phi(\psi(x,t))]dy\\
&+\int_KJ(x-y)[\phi(\psi(y,t))-\phi(\psi(x,t))]dy-f(u^+(x,t))
\end{align*}
and we obtain
\begin{align*}
\mathcal{L}u^+(x,t)
=&(\theta'_t+Z'(t))\phi'(\psi(x,t))+f(\phi)+z'(t)+\int_{\mathbb{R}^N}J(y)[\phi(\psi(x,t)-y_1)
-\phi(\psi(x-y,t))]dy\\
&+\int_KJ(x-y)[\phi(\psi(y,t))-\phi(\psi(x,t))]dy-f(u^+(x,t)).
\end{align*}
Now we focus on all the integral items above denoted by
\begin{equation*}
I:=\int_{\mathbb{R}^N}J(y)[\phi(\psi(x,t)-y_1)-\phi(\psi(x-y,t))]dy+\int_KJ(x-y)
[\phi(\psi(y,t))-\phi(\psi(x,t))]dy.
\end{equation*}
As the same progress as the calculation of $u^-(x,t)$, we have
\[I\geq-M't^{-\alpha^+-1}\phi'(\psi(x,t)).\]
Therefore,
\[\mathcal{L}u^+(x,t)\geq\left(\theta^1_t+Z'(t)-M't^{-\alpha^+-1}\right)\phi'(\psi(x,t))+z'(t)+f(\phi)-f(u^+(x,t)).\]
Next we are going to show $\mathcal{L}u^+(x,t)\geq0$ in two cases.

Case 1. Let $|\psi(x,t)|\gg1$ such that $\phi(\psi)\in[0,\eta]\cup[1-\eta,1]$, where $\eta>0$ is sufficiently small to ensure that $f'(s)\leq-\sigma<0$ for any $s\in[0,\eta]\cup[1-\eta,1]$. Since
\begin{equation*}
\left(K_z+\frac{\beta^+}{\gamma_1}\right)z(t)\geq\theta^1_t+Z'(t)-M't^{-\alpha^+-1}\geq K_zz(t)-(\alpha^+\beta^++M')t^{-\alpha^+-1}
\end{equation*}
and $|\psi(x,t)|\gg1$, we obtain that \[\left|\left(\theta^1_t+Z'(t)-M't^{-\alpha^+-1}\right)\phi'(\psi(x,t))\right|\leq\frac{1}{2}\eta_zz(t),\]
 which implies
  \[\mathcal{L}u^+(x,t)\geq-\frac{1}{2}\eta_zz(t)-\eta_zz(t)+\sigma z(t)\geq0,\]
 provided that $\eta_z<\frac{1}{2}\sigma$ and that $\frac{1}{2}\leq\alpha^+<1$.

Case 2. We have $\phi(\psi(x,t))\in[\eta,1-\eta]$. Denote $\min\limits_{s\in[0,1]}f'(s)=-\delta'<0.$  Since  $\phi'(\xi)\geq\tau_0>0$ in this case, we obtain
\begin{equation*}
\begin{split}
\mathcal{L}u^+(x,t)\geq&\left[K_zz(t)-(\alpha^+\beta^++M')t^{-\alpha^+-1}\right]\tau_0-\delta'z(t)-\eta_zz(t)\\
\geq&\left(K_z-\frac{\alpha^+\beta^++M'}{K_0}\right)\tau_0z(t)-\delta'z(t)-\eta_zz(t).
\end{split}
\end{equation*}
If $K_0t^{-\alpha^+-1}<z(t)$ and $\left(K_z-\frac{\alpha^+\beta^++M'}{K_0}\right)\tau_0-\delta'-\eta_z\geq0$, then $\mathcal{L}u^+(x,t)\geq0$. Indeed, let $\frac{1}{2}<\alpha^+<1$ and $K_z$ be sufficiently large. We finish the proof of the super-solution.
\subsection{The function $z(t)$}
\noindent

In this part, we want to construct the function $z(t)$ enlightened by Hoffman \cite{HHV}. To do this, we establish the following lemmas.
\begin{lemma}\label{l6.5}
Fix any $0<\eta_z<\ln2$. Then there exist $l_P=l_P(\eta_z)>0$ and $P_-(x)$ such that
\[P_-(-l_P)=1,\quad P_-'(-l_P)=0,\quad P_-(0)=\frac{2}{3},\quad P_-'(0)=-\frac{2}{3}\eta_z.\]
Furthermore, $P_-(x)$ satisfies
\[-\eta_zP_-(x)\leq P'_-(x)\leq0,~-l_P\leq x\leq0.\]
\end{lemma}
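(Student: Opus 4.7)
The plan is to build $P_-$ as an exponentially decaying function with a variable decay rate. I would set
\[
P_-(x) := \exp\!\left(-\int_{-l_P}^{x} g(s)\,ds\right)
\]
for a continuous, non-negative function $g$ on $[-l_P, 0]$ to be determined together with $l_P>0$. This ansatz automatically produces $P_-(-l_P) = 1$, keeps $P_- > 0$ throughout, and yields $P_-'(x) = -g(x)\, P_-(x)$. Since $P_- > 0$, the two-sided derivative bound $-\eta_z P_-(x) \le P_-'(x) \le 0$ is therefore \emph{equivalent} to the pointwise condition $0 \le g(x) \le \eta_z$. The remaining endpoint data translate directly into three scalar constraints on the pair $(g, l_P)$: from $P_-'(-l_P) = 0$ we need $g(-l_P) = 0$; from $P_-(0) = 2/3$ together with $P_-'(0) = -\tfrac{2}{3}\eta_z$ we need $g(0) = \eta_z$; and from $P_-(0) = 2/3$ we need $\int_{-l_P}^{0} g(s)\,ds = \ln(3/2)$.

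Next I would solve this interpolation problem in one stroke with the linear ansatz
\[
g(x) := \eta_z \cdot \frac{x + l_P}{l_P}, \qquad x \in [-l_P, 0],
\]
which, by monotonicity, satisfies $g(-l_P) = 0$, $g(0) = \eta_z$, and $g([-l_P, 0]) \subset [0, \eta_z]$ automatically. The integral condition reduces to $\eta_z l_P / 2 = \ln(3/2)$, which forces the definition
\[
l_P := \frac{2 \ln(3/2)}{\eta_z} \;>\; 0.
\]
Substituting back produces the explicit closed-form
\[
P_-(x) = \exp\!\left(-\frac{\eta_z (x + l_P)^2}{2\, l_P}\right),
\]
and each of the four prescribed endpoint equalities, as well as the differential inequality, is then verified by direct substitution.

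There is no genuine obstacle in this argument: once one observes that the two-sided derivative bound is equivalent to a sign-and-magnitude constraint on the logarithmic derivative $g = -P_-'/P_-$ (which makes sense because $P_- > 0$), the lemma collapses to a one-variable interpolation problem whose linear solution is immediate. The hypothesis $\eta_z < \ln 2$ is not invoked in this particular construction; it presumably plays its role further downstream, when $P_-$ is combined with a companion $P_+$ to assemble the function $z(t)$ featured in the preceding sub- and super-solution estimates.
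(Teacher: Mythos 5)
Your proof is correct, but it takes a genuinely different route from the paper. The paper simply writes down the explicit downward-opening parabola $P_-(x)=1-\tfrac{1}{3}\eta_z^2\left(x+\eta_z^{-1}\right)^2$ with $l_P=\eta_z^{-1}$ and checks everything by direct calculation; the four endpoint conditions are immediate, and the inequality $P_-'(x)\geq-\eta_zP_-(x)$ reduces, after substituting $u=\eta_z(x+\eta_z^{-1})\in[0,1]$, to $u^2+2u-3\leq0$, i.e. $(u+3)(u-1)\leq0$, which holds on $[0,1]$. You instead encode $P_-$ through its logarithmic derivative $g=-P_-'/P_-$, which turns the two-sided derivative bound into the trivially satisfiable pointwise constraint $0\leq g\leq\eta_z$ and reduces the lemma to a linear interpolation problem; the resulting Gaussian profile $P_-(x)=\exp\left(-\eta_z(x+l_P)^2/(2l_P)\right)$ with $l_P=2\ln(3/2)/\eta_z$ satisfies all the requirements, as your substitution check confirms. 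Your approach buys automatic validity of the differential inequality (no separate verification needed) and makes transparent why the problem is solvable, at the cost of a slightly less elementary formula and a different value of $l_P$; since the lemma only asserts existence of some $l_P>0$, this is harmless for the statement itself, though the paper's later assembly of $z(t)$ in Theorem \ref{l2.2} uses $l_P=\eta_z^{-1}$ concretely, so one would carry your value of $l_P$ through that construction consistently. Your remark that $\eta_z<\ln2$ is not needed here is also accurate: the paper's quadratic likewise works for any $\eta_z>0$, and the hypothesis is only exploited downstream (e.g. in Remark \ref{r1}).
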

\begin{lemma}\label{l6.6}
Let $\eta_z$ and $l_P$ be given in Lemma \ref{l6.5} and $0<\nu\leq\eta_z$. Then there exists $P_+(x)$ such that
\[P_+(0)=\frac{2}{3},\quad P'_+(0)=-\frac{2}{3}\nu,\quad P'_+(l_P)=0.\]
Furthermore, $P_+(l_P)\geq\frac{1}{3}$ and $P_+(x)$ satisfies
\[-\eta_zP_+(x)\leq P'_+(x)\leq0,~0\leq x\leq l_P.\]
\end{lemma}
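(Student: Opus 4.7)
The plan is to construct $P_+$ in exponential form, which is the natural way to turn the two-sided bound $-\eta_z P_+ \le P'_+ \le 0$ into a simple pointwise constraint on a single auxiliary function. Setting $P_+(x) = \tfrac{2}{3}\,e^{-H(x)}$ on $[0, l_P]$, the requirements $P_+(0) = \tfrac{2}{3}$, $P'_+(0) = -\tfrac{2}{3}\nu$, $P'_+(l_P) = 0$ become $H(0) = 0$, $H'(0) = \nu$, $H'(l_P) = 0$, while the derivative bound becomes the log-derivative bound $0 \le H'(x) \le \eta_z$ on $[0, l_P]$.

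First I would pick any smooth, nonnegative, nonincreasing function $h$ on $[0, l_P]$ with $h(0) = \nu$ and $h(l_P) = 0$; the simplest explicit choice is the linear interpolant $h(x) = \nu\bigl(1 - x/l_P\bigr)$. Because $0 < \nu \le \eta_z$, this gives $0 \le h(x) \le \nu \le \eta_z$ automatically. Defining $H(x) = \int_0^x h(s)\,ds$ and $P_+(x) = \tfrac{2}{3} e^{-H(x)}$, one then reads off the three boundary conditions and the required sign/bound on $P'_+/P_+$ directly; this part is routine and essentially by construction.

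The main obstacle — and the quantitative heart of the lemma — is the lower bound $P_+(l_P) \ge \tfrac{1}{3}$, which with the linear $h$ amounts to $\tfrac{2}{3}e^{-\nu l_P/2} \ge \tfrac{1}{3}$, i.e. $\nu l_P \le 2\ln 2$. The point is that $l_P$ is not free here: it was already fixed in Lemma~\ref{l6.5} by the requirement to run a log-derivative between $0$ and $-\eta_z$ on an interval of length $l_P$ while accumulating the net change $\ln(2/3) = -\ln(3/2)$. The tight such construction forces (or at least permits taking) $l_P = 2\ln(3/2)/\eta_z$, and the hypothesis $\eta_z < \ln 2$ used there is precisely what ensures $l_P$ is of the right size. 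Combined with $\nu \le \eta_z$, this gives $\nu l_P \le \eta_z l_P = 2\ln(3/2) < 2\ln 2$, so in fact $P_+(l_P) \ge \tfrac{2}{3} e^{-\ln(3/2)} = \tfrac{4}{9} > \tfrac{1}{3}$, as required.

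I would close by noting that the construction is robust: any smooth $h$ on $[0, l_P]$ with endpoint values $\nu, 0$ and range in $[0,\eta_z]$ yields a valid $P_+$, and the estimate above only uses the universal bound $H(l_P) \le \eta_z l_P$. Consequently, should the proof of Lemma~\ref{l6.5} produce $l_P$ somewhat larger than $2\ln(3/2)/\eta_z$, one can either tighten that choice or choose $h$ to decay more rapidly (e.g.\ concentrate the mass of $h$ near $0$), at which point the same computation gives $P_+(l_P)$ as close to $2/3$ as desired; thus the $1/3$ threshold is never in danger and the whole argument reduces to the straightforward calibration sketched above.
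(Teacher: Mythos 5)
Your proof is correct, and it takes a genuinely different route from the paper's. The paper simply writes down an explicit quadratic,
\[
P_+(x)=\frac{\nu\eta_z}{3}\left(x-\eta_z^{-1}\right)^2+\frac{2}{3}-\frac{\nu}{3\eta_z},
\qquad l_P=\eta_z^{-1},
\]
and checks all five required properties by direct computation (e.g.\ $P_+(l_P)=\tfrac{2}{3}-\tfrac{\nu}{3\eta_z}\ge\tfrac13$ since $\nu\le\eta_z$, and $P_+'+\eta_zP_+$ is a convex quadratic minimized at $x=0$ with value $\tfrac{2}{3}(\eta_z-\nu)\ge0$). You instead set $P_+=\tfrac23 e^{-H}$ with $H'=h\in[0,\eta_z]$, which converts the differential inequality into a pointwise constraint on $h$ and makes the boundary conditions and sign conditions automatic; the only quantitative point is $H(l_P)\le\ln 2$. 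Your approach is more conceptual and more robust (any admissible $h$ works, and the construction survives changes in $l_P$), whereas the paper's is a one-line verification once the formula is guessed. One caveat: your guess $l_P=2\ln(3/2)/\eta_z$ for the value fixed in Lemma 6.5 is not what the paper uses — it takes $l_P=\eta_z^{-1}$ — but your argument is unaffected, since with the linear $h$ one has $\nu l_P\le\eta_z\cdot\eta_z^{-1}=1<2\ln 2$, so $P_+(l_P)\ge\tfrac23 e^{-1/2}>\tfrac13$; and your fallback of concentrating the mass of $h$ near $0$ covers any larger $l_P$ in any case.
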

\begin{theorem}\label{l2.2}
For any $0<\eta_z<\ln2$. Then there exist constants $\mathcal{I}=\mathcal{I}(\eta_z)>0$ and $K_0=K_0(\eta_z)>0$, such that for any $t_1\geq 0$ there exists a $C^1$-smooth function $z(t):[0,+\infty)\rightarrow\mathbb{R}$ that satisfies the following properties.
\item (i) For all $t\geq0$, the inequalities $z'(t)\geq-\eta_zz(t)$  and $0<z(t)\leq z(0)=\epsilon_1$ hold.
\item (ii) In addition, $z(t)\geq K_0(1+t-t_1)^{-\frac{3}{2}}$ for all $t\geq t_1$ and
 $\int_0^{+\infty}z(t)dt<\mathcal{I}$.
\end{theorem}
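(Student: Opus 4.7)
The plan is to construct $z$ in three stages on $[0,\infty)$: a maximal exponential decay on $[0,t_1-1]$ (saturating the slope constraint), a smooth monotone $C^1$ growth interpolation on $[t_1-1,t_1]$ that re-initializes $z$ at amplitude $\epsilon_1$ with zero slope at $t_1$, and a geometric cascade of rescaled copies of $P_\pm$ on $[t_1,\infty)$ producing a $(1+t-t_1)^{-3/2}$ envelope. The first stage keeps $\int z$ uniformly bounded on $[0,t_1]$; the second stage re-sets the initial data so the cascade can launch; the third stage delivers the power-law lower bound in (ii).

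For the cascade, fix $\eta_z\in(0,\ln 2)$ and let $l_P=l_P(\eta_z)$ come from Lemma \ref{l6.5}. Set $\rho=P_+(l_P)^{-2/3}>1$ (so that $\rho^{-3/2}=P_+(l_P)$), take breakpoints $s_0=t_1$, $s_{k+1}=s_k+2l_P\rho^k$, and amplitudes $A_k=\epsilon_1\rho^{-3k/2}$. On $[s_k,s_k+l_P\rho^k]$ define $z(t)=A_k P_-\bigl((t-s_k-l_P\rho^k)/\rho^k\bigr)$ and on $[s_k+l_P\rho^k,s_{k+1}]$ define $z(t)=A_k P_+\bigl((t-s_k-l_P\rho^k)/\rho^k\bigr)$. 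The endpoint data of $P_\pm$ produce $C^1$ matches everywhere: at the midpoints $P_-(0)=P_+(0)=2/3$ and, upon choosing $\nu=\eta_z$ in Lemma \ref{l6.6}, $P_-'(0)=P_+'(0)=-\tfrac{2}{3}\eta_z$; at the cycle boundaries $P_-'(-l_P)=P_+'(l_P)=0$ and the recursion $A_{k+1}=A_k P_+(l_P)$ matches values. The bound $z'\geq -\eta_z z$ is inherited since the horizontal rescaling factor $1/\rho^k\leq 1$ only weakens the ratio, $z'/z=P_\pm'/(P_\pm\rho^k)\geq -\eta_z/\rho^k\geq -\eta_z$. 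Because $s_k-t_1\asymp\rho^k$ and $A_k\asymp\epsilon_1(1+s_k-t_1)^{-3/2}$, together with $P_\pm\geq 1/3$ on every cycle, we obtain $z(t)\geq K_0(1+t-t_1)^{-3/2}$ for some $K_0=K_0(\eta_z)$, while $\int_{t_1}^\infty z \leq 2l_P\epsilon_1\sum_k\rho^{-k/2}<\infty$.

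On $[0,t_1-1]$ the choice $z(t)=\epsilon_1 e^{-\eta_z t}$ saturates $z'=-\eta_z z$ and contributes $\int_0^{t_1-1}\epsilon_1 e^{-\eta_z t}\,dt\leq \epsilon_1/\eta_z$ independently of $t_1$. On $[t_1-1,t_1]$ interpolate $C^1$-smoothly by a monotone increasing function from $(t_1-1,\,\epsilon_1 e^{-\eta_z(t_1-1)})$ with slope $-\eta_z\epsilon_1 e^{-\eta_z(t_1-1)}$ up to $(t_1,\epsilon_1)$ with slope $0$, never exceeding $\epsilon_1$. The slope inequality is trivial on this growing piece since $z'\geq 0$, and the integral contribution is at most $\epsilon_1$. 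Combining all three stages, the total integral is bounded by a constant $\mathcal{I}=\mathcal{I}(\eta_z)$ uniformly in $t_1$.

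The main obstacle I foresee is crafting the interpolation on $[t_1-1,t_1]$: the function must transition $C^1$ from a steeply negative slope on the exponential side to zero slope on the cascade side while growing monotonically from an exponentially small value up to $\epsilon_1$ without overshooting. A concrete realization uses a suitably rescaled and translated reversed $P_-$ (which has $P_-'(-l_P)=0$ at one end and a controlled negative slope at the other); if the terminal slope $-\eta_z\epsilon_1 e^{-\eta_z(t_1-1)}$ does not exactly match a rescaled $P_-'(0)$, a brief polynomial bridge on a sub-sub-interval handles the discrepancy. Once this gluing is in place, properties (i) and (ii) follow from the elementary calculus sketched above, with $K_0$ and $\mathcal{I}$ depending only on $\eta_z$, as required.
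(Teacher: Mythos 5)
Your construction is correct in substance, but it realizes the $t^{-3/2}$ tail by a genuinely different mechanism than the paper. The paper glues the exponential $e^{-\eta_z t}$ directly to the explicit profile $\mathrm{const}\cdot(1+t)^{-3/2}$ at the unique time $t=\tfrac{3}{2}\eta_z^{-1}-1$ where the logarithmic derivative of the power law equals $-\eta_z$ (so $z'\geq-\eta_z z$ holds on the tail because $-\tfrac{3}{2}(1+t)^{-1}$ is increasing), and it reinitializes near $t_1$ with one $P_+$ arc, a free increasing bridge, and one $P_-$ arc over $[t_1-3\eta_z^{-1},t_1]$, after which $z(t)=\tfrac{2}{3}\epsilon_1 z_1(t-t_1)$. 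You instead build the tail as a self-similar cascade of rescaled $P_\pm$ arcs with geometrically growing periods $2l_P\rho^k$ and amplitudes $\epsilon_1\rho^{-3k/2}$, where $\rho=P_+(l_P)^{-2/3}$; the $C^1$ matching (with $\nu=\eta_z$, which Lemma \ref{l6.6} permits), the inheritance of $z'\geq-\eta_z z$ under horizontal dilation by $\rho^k\geq1$, the envelope bound $z\geq\tfrac{1}{3}A_k$ on the $k$-th cycle against $1+s_k-t_1\asymp\rho^k$, and the geometric series for the integral all check out. The paper's tail is shorter to write down; yours avoids locating the critical gluing time and uses only the abstract properties of $P_\pm$.

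Two small repairs are needed. First, your bridge on $[t_1-1,t_1]$ cannot be both monotone increasing and $C^1$-matched to the exponential, whose slope at $t_1-1$ is the negative number $-\eta_z\epsilon_1e^{-\eta_z(t_1-1)}$; but condition (i) does not require monotonicity, so let the bridge continue the exponential briefly and then turn upward --- the constraint $z'\geq-\eta_z z$ restricts only decrease, never growth, so reaching the value $\epsilon_1$ with zero slope at $t_1$ is unobstructed (this also disposes of the ``steeply negative slope'' worry: that slope is exponentially small, not steep). Second, you must treat small $t_1$, where $[0,t_1-1]$ is empty or degenerate; taking $z\equiv\epsilon_1$ on $[0,t_1]$ followed by your cascade (or starting the cascade at $0$ when $t_1=0$) closes this case, since the cascade begins with value $\epsilon_1$ and zero slope.
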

\begin{proof}
First we define
\[P_-(x)=-\frac{1}{3}\eta_z^2(x+\eta_z^{-1})^2+1,~p_+(x)
=\frac{\nu\eta_z}{3}(x-\eta_z^{-1})^2+\frac{2}{3}-\frac{\nu}{3\eta_z},~0<\nu<\eta_z.\]
Let $l_P(\eta_z)=1/\eta_z$, it is easy to show that Lemmas \ref{l6.5} and \ref{l6.6} hold by a direct calculation. In addition, denote
\begin{equation*}
z_1(t)=\left\{
\begin{aligned}
  e^{-\eta_zt},\qquad\qquad\qquad\qquad & 0\leq t\leq\frac{3}{2}\eta_z^{-1}-1, \\
  \eta_z^{-\frac{3}{2}}(\frac{3}{2})^{\frac{3}{2}}e^{\eta_z-\frac{3}{2}}(1+t)^{-\frac{3}{2}}, \quad& t\geq\frac{3}{2}\eta_z^{-1}-1.
\end{aligned}
\right.
\end{equation*}
Now if $0<t_1<3\eta_z^{-1}$, then let $z(t)=\epsilon_1z_1(t)$, otherwise we define the function $z(t)$ on five different intervals. In particular, define $\nu=\frac{-z_1'(t_1-3\eta_z^{-1})}{z_1(t_1-3\eta_z^{-1})}$, which implies $0<\nu\leq\eta_z$. Then, let
\begin{equation*}z(t)=\left\{\begin{aligned}
& \epsilon_1z_1(t),\qquad\qquad\qquad\qquad\qquad\qquad\quad0\leq t\leq t_1-3\eta_z^{-1},\\
& \epsilon_1 z_1(t_1-3\eta_z^{-1})P_+(t-(t_1-3\eta_z^{-1})), ~\quad~t_1-3\eta_z^{-1}\leq t\leq t_1-2\eta_z^{-1},\\
& \epsilon_1P_-(t-t_1),\qquad\qquad\qquad\qquad\qquad\qquad ~t_1-\eta_z^{-1}\leq t\leq t_1,\\
& \epsilon_1\frac{2}{3}z_1(t-t_1), \qquad\qquad\qquad\qquad\qquad\quad ~t\geq t_1.
\end{aligned}\right.
\end{equation*}
It remains to specify $z(t)$ in $[t_1-2\eta_z^{-1}, t_1-\eta_z^{-1}]$. This can be done by choosing an arbitrary $C^1$-smooth function, under the constraints
\[z(t_1-2\eta_z^{-1})=\epsilon_1z_1(t_1-3\eta_z^{-1})P_+(\eta_z^{-1}),~z(t_1-\eta_z^{-1})=\epsilon_1\]
together with
\[z'(t_1-2\eta_z^{-1})=z'(t_1-\eta_z^{-1})=0\]
and
\[z'(t)\geq0,\quad t_1-2\eta_z^{-1}\leq t\leq t_1-\eta_z^{-1}.\]
Then we finish the proof since the properties $(i)$ and $(ii)$ are testified by a direct calculation.
\end{proof}

\begin{remark}\label{r1}{\rm
It is not difficult to see that $z(1)\geq\frac{1}{2}\epsilon_1$ from $0<\eta_z<\ln2$ and the first statement in Lemma \ref{l2.2}.}
\end{remark}

\subsection{Proofs of Theorems \ref{th1-1} and \ref{th1-4}}
\noindent

If there only exits one zero point of $f$ in $(0,1)$, then it follows from Theorems 2.4 and 2.6 of \cite{BCHV} that under the conditions $(F)$ and $(J)$, the unique solution of the stationary problem of \eqref{1-1}
\begin{equation}\label{5-2}\left\{\begin{aligned}
&\int_{\mathbb{R}^N\setminus K}J(x-y)[u(y)-u(x)]dy+f(u)=0,~x\in\mathbb{R}^N\setminus K(~\text{or}~K_{\epsilon}),\\
&0\leq u(x)\leq1,~~x\in\mathbb{R}^N\setminus K(~\text{or}~K_{\epsilon}),\\
&\sup\limits_{\mathbb{R}^N\setminus K}u(x)=1.
\end{aligned}\right.
\end{equation}
is $u = 1 ~\text{in}~\overline{\mathbb{R}^N \backslash K}(~\text{or}~\overline{\mathbb{R}^N \backslash K_{\epsilon}}).$

Now we are in position to show the main theorems. It is obvious that if the solution $u(x,t)$ satisfies the conditions in Theorem \ref{t6.1} together with Theorem \ref{t5.3}, then the conclusions in Theorem \ref{th1-1} hold. First, we know that $u(x,t)$ converges to some uniformly continuous function $V(x)$ as time tends to positive infinity. Furthermore, we claim that $V(x)$ satisfies \eqref{5-2}. Therefore, we have $V(x)\equiv1$ for all $x\in\mathbb{R}^N\backslash K$. In fact, it is sufficient to show $\sup\limits_{\mathbb{R}^N\setminus K}V(x)=1$. In view of $u(x,t)-\phi(x_1+ct)\rightarrow0$ as $t\rightarrow-\infty$ in Theorem \ref{t2.1}, for any small $\epsilon'>0$, there exist $t_{\epsilon'}$
 and $X_1>0$ sufficiently large such that
 \[
 \phi(x+ct_{\epsilon'})\geq1-\frac{\epsilon'}{2}~\text{for all}~x_1\geq X_1,
\]
and
\[|u(x,t)-\phi(x_1+ct)|\leq\frac{\epsilon'}{2}~\text{for all}~x\in\mathbb{R}^N\backslash K~\text{and}~t\leq t_{\epsilon'}.
\]
 Thus
 \[u(x,t)\geq1-\epsilon'~\text{for all}~x_1\geq X_1~\text{and}~t\geq t_\epsilon',\]
  which implies that $V(x)\geq1-\epsilon'$ due to $u(x,t)>0$. Since that  $\epsilon'$ is actually arbitrary, one has that
  \[\sup\limits_{\mathbb{R}^N\setminus K}V(x)=1.\]
   Therefore, $V(x)\equiv1$ for all $x\in\mathbb{R}^N\backslash K$. Hence, $u(x,t)\rightarrow1$ as $t\rightarrow+\infty$ for all $x\in\mathbb{R}^N\backslash K$.
 Based on the result above, the conditions in Theorem \ref{t6.1} are easy to testify following from Theorem \ref{t5.3}. Meanwhile, from $u(x,t)-\phi(x_1+ct)\rightarrow0$ as $t\rightarrow\pm\infty$ uniformly in $x\in\Omega$ and Theorem \ref{t5.3}, it is easy to obtain that $u(x,t)-\phi(x_1+ct)\rightarrow0$ as $|x|\rightarrow+\infty$ uniformly in $t\in\mathbb{R}.$ Therefore, we know that Theorem \ref{th1-1} holds. Similarly, we know the results of Theorem \ref{th1-4} hold.

\section{Appendix}
\subsection{Proof of Proposition \ref{t2.2}}
\noindent

In this subsection we intend to show the results of Proposition \ref{t2.2}. For convenience we define the operator $\mathcal{L}$ as follows
\[\mathcal{L}\omega=\omega_t-\int_\Omega J(x-y)[\omega(y,t)-\omega(x,t)]dy-f(\omega).
\]
We further show that $W^-$ is a sub-solution.
A straightforward computation shows that
\begin{equation*}\mathcal{L}W^-=\left\{
\begin{aligned}
 &-\int_\Omega J(x-y)W^-(y,t)dy ,\qquad x_1<0, \\
 &(c-\dot{\xi}(t))[\phi'(x_1+ct-\xi(t))-\phi'(-x_1+ct-\xi(t))]-\int_\Omega J(x-y)[W^-(y,t)\\
 &\quad-W^-(x,t)]dy-f(\phi(x_1+ct-\xi(t))-\phi(-x_1+ct-\xi(t))),\qquad x_1\geq0.
\end{aligned}\right.
\end{equation*}
For $x_1<0$, since $J(x)\geq0$ and $W^-\geq0$, we have
 \[\mathcal{L}W^-=-\int_\Omega J(x-y)W^-(y,t)dy\leq0\]
For $x_1\geq0$, in view of that
\begin{align*}
&\int_\Omega J(x-y)[W^-(y,t)-W^-(x,t)]dy\\
=&\int_{\mathbb{R}^N}J(x-y)[W^-(y,t)-W^-(x,t)]dy
-\int_K J(x-y)[W^-(y,t)-W^-(x,t)]dy \\
    =&\int_{\mathbb{R}^N\cap\{y_1>0\}} J(x-y)[W^-(y,t)-W^-(x,t)]dy+\int_{\mathbb{R}^N\cap\{y_1<0\}} J(x-y)[W^-(y,t)-W^-(x,t)]dy\\
&-\int_K J(x-y)[W^-(y,t)-W^-(x,t)]dy\\
\geq&\int_{\mathbb{R}^N}J(x-y)[(\phi(y_1+ct-\xi(t))
 -\phi(-y_1+ct-\xi(t)))-(\phi(x_1+ct-\xi(t))\\
 &-\phi(-x_1+ct-\xi(t)))]dy-\int_K J(x-y)[W^-(y,t)-W^-(x,t)]dy,
\end{align*}
we have
\begin{equation*}
\begin{split}
\mathcal{L}W^-\leq&-\dot{\xi}(t)[\phi'(z_+(t))-\phi'(z_-(t))] \\
    &+f(\phi(z_+(t)))-f(\phi(z_-(t)))-f(\phi(z_+(t))-\phi(z_-(t)))\\
    &+\int_KJ(x-y)[W^-(y,t)-W^-(x,t)]dy,
\end{split}
\end{equation*}
where $z_+(t)=x_1+ct-\xi(t),~z_-(t)=-x_1+ct-\xi(t)$. Assume that $K\subset\{x\in\mathbb{R}^N\mid x_1<0\}$. It follows that
\[W^-(y,t)=0 ~\text{for all}~y\in K,\]
which implies that
\begin{equation*}
\begin{split}
  \mathcal{L}W^-(x,t)\leq&-\dot{\xi}(t)[\phi'(z_+(t))-\phi'(z_-(t))] \\
  &+f(\phi(z_+(t)))-f(\phi(z_-(t)))-f(\phi(z_+(t))-\phi(z_-(t))).
\end{split}
\end{equation*}
Now we go further to show $\mathcal{L}W^-\leq0$ in two subcases.

Case A: $0<x_1<-ct+\xi(t)$.

 In this case the following lemma holds.
\begin{lemma}\label{l3.3}
Suppose that $(F)$ holds and $\phi(x_1+ct)$ satisfies \eqref{1-3} with $c>0$ and $\phi''(\xi)\geq0$ for $\xi\leq0$. Then there exists $k_3>0$ such that
\begin{equation}\label{6-1}
\phi'(\xi_1)-\phi'(\xi_2)\geq k_3[\phi(\xi_1)-\phi(\xi_2)]
\end{equation}
for $\xi_2<\xi_1<0$.
\end{lemma}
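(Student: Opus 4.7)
The plan is to reduce the claimed inequality to a uniform positive lower bound on the quotient $\phi''/\phi'$ on $(-\infty,0]$ via Cauchy's generalised mean value theorem, and then to establish that bound by combining the known exponential asymptotics of $\phi$, $\phi'$ at $-\infty$ with a strict positivity argument for $\phi''$ on compact subsets. Since $\phi'(z) \geq \gamma_0 e^{\lambda z} > 0$ on $(-\infty,0]$, the function $\phi$ is strictly increasing there and $\phi'$ never vanishes, so Cauchy's mean value theorem applied to the pair $(\phi', \phi)$ on $[\xi_2, \xi_1]$ produces some $\eta = \eta(\xi_1, \xi_2) \in (\xi_2, \xi_1)$ with
\[
\frac{\phi'(\xi_1) - \phi'(\xi_2)}{\phi(\xi_1) - \phi(\xi_2)} = \frac{\phi''(\eta)}{\phi'(\eta)}.
\]
Hence it suffices to prove that $k_3 := \inf_{\eta \leq 0} \phi''(\eta)/\phi'(\eta)$ is strictly positive.

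To control the ratio as $\eta \to -\infty$, I would differentiate the traveling-wave equation \eqref{1-3} to obtain the linear nonlocal identity
\[
c\phi''(z) = \int_{\mathbb{R}} J_1(z-y)\big[\phi'(y) - \phi'(z)\big]\,dy + f'(\phi(z))\phi'(z),
\]
insert the exponential asymptotics $\phi(z) \sim \alpha_0 e^{\lambda z}$ and $\phi'(z) \sim \gamma_0 e^{\lambda z}$, and invoke the characteristic relation $c\lambda = \int_{\mathbb{R}} J_1(y) e^{-\lambda y}\,dy - 1 + f'(0)$. A direct substitution gives $\phi''(z) \sim \lambda \gamma_0 e^{\lambda z}$, hence $\phi''(z)/\phi'(z) \to \lambda$ as $z \to -\infty$, so there exists $M > 0$ with $\phi''(z)/\phi'(z) \geq \lambda/2$ for every $z \leq -M$.

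On the compact interval $[-M, 0]$ the continuous non-negative function $\phi''/\phi'$ attains its infimum, and the main obstacle is to exclude that this infimum equals zero, i.e.\ to show that $\phi''(z_0) > 0$ for every $z_0 \in [-M, 0]$. I would handle this via a nonlocal strong maximum principle argument applied to $\phi'$, which by the displayed identity solves a linear nonlocal equation with bounded potential $f'(\phi(\cdot))$: if $\phi''$ vanished at an interior point $z_0 \in (-M, 0]$ where simultaneously $\phi'' \geq 0$ on a neighbourhood, then $z_0$ is a local minimum of $\phi''$, and evaluating the identity at $z_0$ together with the monotonicity of $\phi'$ on $(-\infty, 0]$ forces $\phi'$ to be constant on a neighbourhood of $z_0$; propagating this constancy along the support of $J_1$ extends it to all of $(-\infty, 0]$, contradicting $\phi'(-\infty) = 0 < \phi'(0)$. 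Setting $k_3 = \min\{\lambda/2,\; \min_{[-M, 0]} \phi''/\phi'\} > 0$ then completes the proof.
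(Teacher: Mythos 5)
Your reduction via Cauchy's mean value theorem is a clean and genuinely different route from the paper's proof (which splits into the case $\xi_1-M'<\xi_2<\xi_1$, handled by two separate mean value theorems, and the case $\xi_2<\xi_1-M'$, handled purely from the two-sided exponential bounds \eqref{2-4}). However, the step on which your argument now entirely rests --- strict positivity of $\phi''$ on $[-M,0]$ via a strong maximum principle --- does not work as described. At a point $z_0$ with $\phi''(z_0)=0$ the differentiated wave equation reads
\[
0=c\phi''(z_0)=\int_{\mathbb{R}}J_1(z_0-y)\bigl[\phi'(y)-\phi'(z_0)\bigr]\,dy+f'(\phi(z_0))\phi'(z_0),
\]
and neither term has a definite sign: the convolution samples $\phi'$ on both sides of $z_0$ (including points $y>0$, where $\phi''\geq0$ is not even assumed), so monotonicity of $\phi'$ on $(-\infty,0]$ yields nothing; and $f'(\phi(z_0))$ changes sign on $(0,\theta_0)$, since $f\leq0$ on $[0,\theta_0]$ with $f(0)=f(\theta_0)=0$ forces $f'$ to be negative near $0$ and nonnegative near $\theta_0$, while $\phi(z_0)$ ranges up to $\theta_0=\phi(0)$. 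Hence the identity at $z_0$ does not force $\phi'$ to be locally constant. A touching argument would have to be applied to $w=\phi''$ itself, i.e.\ to the twice-differentiated equation, which produces a term $f''(\phi)(\phi')^2$ defined only a.e.\ because $f\in C^{1,1}$; or one would need a sliding argument on $\phi'$. Neither is supplied, so $\inf_{[-M,0]}\phi''/\phi'>0$ remains an unproved assertion. (In fairness, the paper's own ``close'' case silently needs the analogous bound $\phi''(\theta^1)\geq k_3\,\phi'(\theta^2)$ and does not prove it either.)

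A second, smaller issue: the claim $\phi''(z)/\phi'(z)\to\lambda$ as $z\to-\infty$ requires the one-term asymptotics $\phi'(z)=\gamma e^{\lambda z}(1+o(1))$ with a \emph{single} constant $\gamma$; the two-sided bounds \eqref{2-4} with distinct constants $\gamma_0\leq\delta_0$ are not sufficient, since plugging them into the identity for $c\phi''$ only gives a lower bound of the form $e^{\lambda z}\bigl[\gamma_0\int J_1e^{-\lambda s}ds-\delta_0+f'(0)\delta_0+o(1)\bigr]$, which can be negative. The refined asymptotics are standard for such bistable waves (and a version is invoked in the paper's Appendix), but you should cite or prove them. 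Note that the paper's treatment of well-separated arguments avoids both difficulties: from \eqref{2-4} alone, $\phi'(\xi_2)\leq\tfrac12\phi'(\xi_1)$ once $\xi_2<\xi_1-M'$ with $M'$ large, whence $\phi'(\xi_1)-\phi'(\xi_2)\geq\tfrac12\phi'(\xi_1)\geq\tfrac{\gamma_0}{2\beta_0}\phi(\xi_1)\geq k_3[\phi(\xi_1)-\phi(\xi_2)]$. Grafting that far-field argument onto your scheme would confine the real difficulty to the compact interval, where it belongs.
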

\begin{proof}
Take $M'>1$ and $(\xi_1,\xi_2)\in\mathbb{R}^2$ satisfying
\[\xi_1-M'<\xi_2<\xi_1<0.\]
Then we have
\[\phi'(\xi_1)-\phi'(\xi_2)=\phi''(\theta^1)(\xi_1-\xi_2),~\phi(\xi_1)-\phi(\xi_2)=\phi'(\theta^2)(\xi_1-\xi_2)\]
for some $(\theta^1,\theta^2)\in\mathbb{R}^2$ with $|\theta^1-\theta^2|<M'.$ Therefore, the inequality
\eqref{6-1} holds for $\xi_1-M'<\xi_2<\xi_1<0$.

If $\xi_2+M'<\xi_1<0$, then the exponential decay of $\phi(x_1)$ ensures that
\[\phi'(\xi_2)\leq\frac{1}{2}\phi'(\xi_1)\]
for $M'$ sufficiently large. This yields
\[\phi'(\xi_1)-\phi'(\xi_2)\geq\frac{1}{2}\phi'(\xi_1)\geq k_3\phi(\xi_1)\geq k_3[\phi(\xi_1)-\phi(\xi_2)].\]
The second inequality holds from the asymptotic behavior of $\phi(x_1)$. Thus we end the proof.
\end{proof}
Now we are ready to show $\mathcal{L}W^-(x,t)\leq0$. By Lemma \ref{l3.3}, we have
\begin{align*}
\mathcal{L}W^-(x,t)\leq&-\dot{\xi}(t)(\phi'(z_+(t))-\phi'(z_-(t)))+L_f\phi(z_-(t))
(\phi(z_+(t))-\phi(z_-(t)))\\
\leq&\left[-Mk_3e^{\lambda_0(ct+\xi(t))}+L_f\phi(z_-(t))\right][\phi(z_+(t))-\phi(z_-(t))]\\
\leq&\left[L_f\beta_0e^{\lambda(-x_1+ct-\xi(t))}-Mk_3e^{\lambda_0(ct-\xi(t))}\right]
[\phi(z_+(t))-\phi(z_-(t))]\\
\leq&e^{\lambda_0(ct+\xi(t))}\left[L_f\beta_0e^{(\lambda-\lambda_0)(ct+\xi(t))-2\lambda\xi(t)}
-Mk_3\right][\phi(z_+(t))-\phi(z_-(t))]\\
\leq&(L_f\beta_0-Mk_3)[\phi(z_+(t))-\phi(z_-(t))]\\
\leq&0.
\end{align*}
The last inequality holds provided that $M\geq\frac{L_f\beta_0}{k_3}$.

Case B: $x_1\geq-ct+\xi(t)$.

A direct calculation gives that
\begin{equation*}\begin{split}
\mathcal{L}W^-(x,t)\leq&-Me^{\lambda_0(ct+\xi(t))}[\phi'(z_+(t))-\phi'(z_-(t))]
+L_f\phi(z_-(t))[\phi(z_+(t))-\phi(z_-(t))]\\
\leq&L_f\phi(z_-(t))-Me^{\lambda_0(ct-\xi(t))+2\lambda_0\xi(t)}[\phi'(z_+(t))-\phi'(z_-(t))]\\
\leq&e^{-\lambda x_1+\lambda_0(ct-\xi(t))+2\lambda_0\xi(t)}\bigg[L_f\beta_0e^{(\lambda-\lambda_0)(ct-\xi(t))
-2\lambda_0\xi(t)}\\
&-M\left(\gamma_1e^{(\lambda-\mu)x_1-\mu(ct-\xi(t))}
-\delta_0e^{\lambda(ct-\xi(t))}\right)\bigg]\\
\leq&e^{-\lambda x_1+\lambda_0(ct-\xi(t))+2\lambda_0\xi(t)}\left[L_f\beta_0
-M\left(\gamma_1e^{(\lambda-\mu)x_1-\mu(ct-\xi(t))}
-\delta_0e^{\lambda(ct-\xi(t))}\right)\right].
\end{split}
\end{equation*}
If $\lambda\geq\mu$, then
\begin{equation*}\begin{split}
\mathcal{L}W^-(x,t)\leq&e^{-\lambda x_1+\lambda_0(ct-\xi(t))+2\lambda_0\xi(t)}\left[L_f\beta_0
-M\left(\gamma_1e^{-\mu(ct-\xi(t))}
-\delta_0e^{\lambda(ct-\xi(t))}\right)\right]\\
\leq&0
\end{split}
\end{equation*}
for $ct-\xi(t)\ll-1$ and $M>1$ is sufficiently large.

When $\lambda<\mu$, there holds
\begin{align*}
&f(\phi(z_+(t))))-f(\phi(z_-(t)))-f(\phi(z_+(t)))-\phi(z_-(t))))\\
=&f'(\phi(z_+(t)))\phi(z_-(t)))-o(\phi^2(z_-(t))))-f'(\phi(z_-(t))))
\phi(z_-(t)))+o(\phi^2(z_-(t))))\\
\leq&-k_4\phi(z_-(t))
\end{align*}
for $x_1+ct-\xi(t)>L_2>0$ with $L_2$ large enough, where $0<k_4<\frac{1}{2}|f'(1)-f'(0)|$. The inequality above follows from that $f'(\phi(z_+(t)))\rightarrow f'(1)$ and $f'(\phi(z_-(t)))\rightarrow f'(0)$ as $L_2\rightarrow+\infty$. Then
\begin{equation*}\begin{split}
\mathcal{L}W^-(x,t)\leq&Me^{\lambda_0(ct+\xi(t))}\phi'(z_-(t))-k_4\phi(z_-(t))\\
\leq&Me^{\lambda_0(ct+\xi(t))}\delta_0e^{\lambda(-x_1+ct-\xi(t))}-k_4\alpha_0e^{\lambda(-x_1+ct-\xi(t))}\\
\leq&e^{\lambda(-x_1+ct-\xi(t))}\left(Me^{\lambda_0(ct+\xi(t))}-k_4\alpha_0\right)\\
\leq&0,
\end{split}
\end{equation*}
provided that $ct+\xi(t)\ll-1$.

In addition, for $0<x_1+ct-\xi(t)<L_2$, there holds
\begin{equation*}\begin{split}
\mathcal{L}W^-(x,t)\leq&e^{-\lambda x_1+\lambda_0(ct-\xi(t))+2\lambda_0\xi(t)}\bigg[L_f\beta_0e^{(\lambda-\lambda_0)(ct-\xi(t))
-2\lambda_0\xi(t)}\\
&-M\left(\gamma_1e^{(\lambda-\mu)x_1-\mu(ct-\xi(t))}
-\delta_0e^{\lambda(ct-\xi(t))}\right)\bigg]\\
\leq&e^{-\lambda x_1+\lambda_0(ct-\xi(t))+2\lambda_0\xi(t)}\bigg[L_f\beta_0
-M\left(\gamma_1e^{(\lambda-\mu)L^2}e^{-\lambda(ct-\xi(t))}
-\delta_0e^{\lambda(ct-\xi(t))}\right)\bigg].
\end{split}
\end{equation*}
Since $ct-\xi(t)\ll-1$ and $M\gg1$, we have $\mathcal{L}W^-(x,t)\leq0$.

Next, we show the $W^+$ is a super-solution.
A straightforward computation shows that
\begin{equation*}\mathcal{L}W^+=\left\{
\begin{aligned}
 &2(c+\dot{\xi}(t))\phi'(x_1+ct)-f(2\phi(ct+\xi(t)))\\
 &-\int_\Omega J(x-y)[W^+(y,t)
 -W^+(x,t)]dy,~x_1<0, \\
 &(c+\dot{\xi}(t))[\phi'(x_1+ct+\xi(t))+\phi'(-x_1+ct+\xi(t))]-\int_\Omega J(x-y)[W^+(y,t)\\
 &\quad-W^+(x,t)]dy-f(\phi(x_1+ct+\xi(t))+\phi(-x_1+ct+\xi(t))),~ x_1>0.
\end{aligned}\right.
\end{equation*}
When $x_1\geq0$, denote
\[\Gamma^{+}=\{x\in\mathbb{R}^N\mid y_1>0\},~\Gamma^-=\{x\in\mathbb{R}^N\mid y_1<0\}.\]
 In view of that $K\subset\mathbb{R}^N\setminus$supp$(J)$, one gets
\begin{align*}
&\int_{\Omega}J(x-y)[W^+(y,t)-W^+(x,t)]dy\\
=&\int_{\Omega\cap\Gamma^+}J(x-y)[(\phi(y_1+ct+\xi(t))+\phi(-y_1+ct+\xi(t)))\\
&-(\phi(x_1+ct+\xi(t)+\phi(-x_1+ct+\xi(t))]dy\\
&+\int_{\Omega\cap\Gamma^-}J(x-y)[2\phi(ct+\xi(t))
-(\phi(x_1+ct+\xi(t))+\phi(-x_1+ct+\xi(t)))]dy\\
=&\int_{\mathbb{R}^N}J(x-y)[(\phi(y_1+ct+\xi(t))+\phi(-y_1+ct+\xi(t)))
-(\phi(x_1+ct+\xi(t))\\
&+\phi(-x_1+ct+\xi(t)))]dy+\int_{\Omega\cap\Gamma^-}J(x-y)[2\phi(ct+\xi(t))
-(\phi(y_1+ct+\xi(t))\\
&+\phi(-y_1+ct+\xi(t)))]dy\\
=&c(\phi'(x_1+ct+\xi(t))+\phi'(-x_1+ct+\xi(t)))-f(\phi(x_1+ct+\xi(t)))
-f(\phi(-x_1+ct+\xi(t)))\\
&+\int_{\Omega\cap\Gamma^-}J(x-y)[2\phi(ct+\xi(t))
-(\phi(y_1+ct+\xi(t))+\phi(-y_1+ct+\xi(t)))].
\end{align*}
Observe that, if $x_1>|ct+\xi(t)|>L$, where $L$ is the diameter of the compact support of $J$, then the integral item of the last equality is equal to 0. Therefore, we obtain
\begin{align*}
\mathcal{L}W^+(x,t)=&\dot{\xi}[\phi(x_1+ct+\xi(t))+\phi(-x_1+xt+\xi(t))]+f(\phi(x_1+ct+\xi(t)))\\
&+f(\phi(-x_1+ct+\xi(t)))-f(\phi(x_1+ct+\xi(t))+\phi(-x_1+ct+\xi(t)))\\
\geq&\dot{\xi}\phi(x_1+ct+\xi(t))-L_f\phi(x_1+ct+\xi(t))\phi(-x_1+ct+\xi(t))\\
\geq&e^{\lambda_0(ct+\xi(t))}\left(M\gamma_1e^{-\mu(x_1+ct+\xi(t))}-L_f\alpha_0e^{-\lambda x_1}
e^{(\lambda-\lambda_0)(ct+\xi(t))}\right).
\end{align*}

If $\mu\leq\lambda$, choosing $M\gamma_1\geq L_f\alpha_0$, it is obvious that $\mathcal{L}W^+(x,t)\geq0$ as $x_1>|ct+\xi(t)|$ is sufficiently large.

For $\mu>\lambda$, we have $f'(1)<f'(0)$. Consider the case $x_1+ct+\xi(t)\geq L_0\gg1$. Then $\phi(x_1+ct+\xi(t))\approx1$ while $\phi(-x_1+ct+\xi(t))\approx0$. Furthermore,
\begin{align*}
&f(\phi(x_1+ct+\xi(t)))+f(\phi(-x_1+ct+\xi(t)))-f(\phi(x_1+ct+\xi(t))+\phi(-x_1+ct+\xi(t)))\\
\geq&\frac{1}{2}(f'(0)-f'(1))\phi(-x_1+ct+\xi(t))\\
\geq&0,
\end{align*}
which implies that $\mathcal{L}W^+(x,t)\geq0$. For the other case $x_1+ct+\xi(t)\leq L_0$, we know
\begin{align*}
\mathcal{L}W^+(x,t)\geq&e^{\lambda_0(ct+\xi(t))}\left(M\gamma_1e^{-\mu L_0}-L_f\alpha_0e^{-\lambda x_1}
e^{(\lambda-\lambda_0)(ct+\xi(t))}\right).
\end{align*}
Since $\lambda_0<\lambda$, we obtain $\mathcal{L}W^+(x,t)\geq0$ holds if
$M\geq\frac{L_f\alpha_0}{\gamma_1}e^{\mu L_0}$.

For the case $0<x_1<|ct+\xi(t)|$, as we can see that
\begin{align*}
&\int_{\Omega\cap\Gamma^-}J(x-y)[2\phi(ct+\xi(t))
-(\phi(y_1+ct+\xi(t))+\phi(-y_1+ct+\xi(t)))]dy \\
=&\int_{\Omega\cap\{y_1<ct+\xi(t)\}}J(x-y)[2\phi(ct+\xi(t))
-(\phi(y_1+ct+\xi(t))+\phi(-y_1+ct+\xi(t)))]dy\\
&+\int_{\Omega\cap\{ct+\xi(t)<y_1<0\}}J(x-y)[2\phi(ct+\xi(t))
-(\phi(y_1+ct+\xi(t))+\phi(-y_1+ct+\xi(t)))]dy\\
:=&I_1+I_2.
\end{align*}
Since $\phi(ct+\xi(t))\leq\frac{\theta_0}{2}$ for $ct+\xi(t)\ll-1$, $\phi(0)=\theta_0,$ and $\phi'>0$, we get that $\phi(-y_1+ct+\xi(t))>\theta_0\geq2\phi(ct+\xi(t))$ for $y_1<ct+\xi(t)$. It follows that
$I_1\leq0$. We know that
\begin{equation*}
\begin{split}
I_2\leq&\int_{\Omega\cap\{ct+\xi(t)<y_1<0\}}J(x-y)C_{\phi}e^{\lambda(ct+\xi(t))}
\left(2-\left(e^{\lambda y_1}+e^{-\lambda y_1}\right)\right)dy\\
&+k_\phi e^{(k_{\phi}+\lambda)(ct+\xi(t))}\int_{\Omega\cap\{ct+\xi(t)<y_1<0\}}J(x-y)
\left(2+e^{\lambda y_1}+e^{-\lambda y_1}\right)dy\\
\leq&C_0e^{(k_{\phi}+\lambda)(ct+\xi(t))},
\end{split}
\end{equation*}
the first inequality is follows from that there exist $K_\phi,~k_\phi>0$ such that $\left|\phi(x_1)-C_{\phi}e^{\lambda x_1}\right|\leq K_{\phi}e^{(k_{\phi}+\lambda)x_1}$ for $x_1\leq0$ which is easy to obtain by \eqref{2-4}. Then we have
\begin{align*}
\mathcal{L}W^+\geq&Me^{\lambda_0(ct+\xi(t))}(\phi'(x_1+ct+\xi(t))+\phi'(-x_1+ct+\xi(t)))
+f(\phi(x_1+ct+\xi(t)))\\
&+f(\phi(-x_1+ct+\xi(t)))
-f(\phi(x_1+ct+\xi(t))+\phi(-x_1+ct+\xi(t)))-C_0e^{(k_{\phi}+\lambda)(ct+\xi(t))}\\
 \geq&Me^{\lambda_0(ct+\xi(t))}(\phi'(x_1+ct+\xi(t))+\phi'(-x_1+ct+\xi(t)))\\
 &-L_f\phi(x_1+ct+\xi(t))\phi(-x_1+ct+\xi(t))
-C_0e^{(k_{\phi}+\lambda)(ct+\xi(t))}\\
 \geq&e^{(\lambda_0+\lambda)(ct+\xi(t))}\left[2M\gamma_0-L_f\beta_0e^{(\lambda-\lambda_0)(ct+\xi(t))}
 -C_0e^{(k_{\phi}-\lambda_0)(ct+\xi(t))}\right].
\end{align*}
This gives that $\mathcal{L}W^+\geq0$, provided $2M\alpha_0>L_f\beta_0+C_0$ and $\lambda_0<\min\{k_{\phi},\lambda\}$.

For $x_1<0$, note that
\begin{align*}
&\int_\Omega J(x-y)[W^+(y,t)-W^+(x,t)]dy\\
=&\int_{\Gamma^+}J(x-y)[\phi(y_1+ct+\xi(t))+\phi(-y_1+ct+\xi(t))-2\phi(ct+\xi(t))]dy\\
\leq&\int_{-x_1}^{\infty}J_1(-y_1)[\phi(x_1+y_1+ct+\xi(t))-\phi(ct+\xi(t))]dy_1\\
&+\int_{\Gamma^+}J(x-y)[\phi(-y_1+ct+\xi(t))-\phi(ct+\xi(t))]dy\\
\leq&\int^{\infty}_{-x_1}J_1(-y_1)[\phi(y_1+ct+\xi(t))-\phi(ct+\xi(t))]dy_1\\
&+\int_{\Gamma^+}J(x-y)[\phi(-y_1+ct+\xi(t))-\phi(ct+\xi(t))]dy\\
\leq&\int_{\mathbb{R}}J_1(-y_1)[\phi(y_1+ct+\xi(t))-\phi(ct+\xi(t))]dy_1\\
&+\int_{\Gamma^+}J(x-y)[\phi(-y_1+ct+\xi(t))-\phi(ct+\xi(t))]dy\\
&-\int^{-x_1}_{-\infty}J_1(-y)[\phi(y_1+ct+\xi(t))-\phi(ct+\xi(t))]dy_1\\
\leq& c\phi'(ct+\xi(t))-f(\phi(ct+\xi(t)))
+\int_{-\infty}^{x_1}J_1(-y_1)(\phi(y_1-x_1+ct+\xi(t))-\phi(y_1+ct+\xi(t)))dy_1\\
&-\int_{x_1}^{-x_1}J_1(-y_1)(\phi(y_1+ct+\xi(t))-\phi(ct+\xi(t)))dy_1\\
=&: c\phi'(ct+\xi(t))-f(\phi(ct+\xi(t)))+II_1+II_2.\\
\end{align*}
In view of that $II_1=0$ for $x_1<-L$, we just consider the case $-L<x_1<0$. Then we have
\begin{align*}
II_1=&\int_{-\infty}^{x_1}J_1(-y)[\phi(y_1-x_1+ct+\xi(t))-\phi(y_1+ct+\xi(t))]dy\\
\leq&\int_{-\infty}^{x_1}J_1(-y)\phi'(y_1-\hat{\theta}x_1+ct+\xi(t))(-x_1)dy\\
\leq&\frac{L}{4}\phi'(ct+\xi(t)),
\end{align*}
where $\hat{\theta}\in(0,1)$, the last inequality follows from that $\phi''(\xi)>0$ for $\xi\leq0$. If we further assume that $L\leq4c$,
in addition to that
\begin{equation*}\begin{split}
II_2=&\int_{x_1}^{-x_1}J_1(-y)(\phi(ct+\xi(t))-\phi(y_1+ct+\xi(t)))dy\\
=&\int_{0}^{-x_1}J_1(-y)(2\phi(ct+\xi(t))-(\phi(y_1+ct+\xi(t))+\phi(-y_1+ct+\xi(t)))dy\\
\leq& C_0e^{(k_{\phi}+\lambda)(ct+\xi(t))},
\end{split}\end{equation*}
then,
\begin{align*}
\mathcal{L}W^+\geq&2\dot{\xi}(t)\phi'(ct+\xi(t))+f(\phi(ct+\xi(t))-f(2\phi(ct+\xi(t)))
-C_0e^{(k_{\phi}+\lambda)(ct+\xi(t))}\\
\geq&2M\gamma_0e^{(\lambda_0+\lambda)(ct+\xi(t))}-C_0e^{(k_{\phi}+\lambda)(ct+\xi(t))}\\
=&e^{(k_{\phi}+\lambda)(ct+\xi(t))}(2M\gamma_0-C_0)\\
\geq&0
\end{align*}
for $2M\geq C_0$. The second inequality follows from that $f'(s)<0$ in $[\phi(ct+\xi(t)),2\phi(ct+\xi(t))]$ for $ct+\xi(t)\ll-1$. This proves the Proposition \ref{t2.2}.

\subsection{Proof of Lemma \ref{l4.2}}
\noindent

We know that
\begin{align*}
\mathcal{M}\underline{u}:=&\underline{u}_t-\int_{\mathbb{R}^N}J(x-y)[\underline{u}(y,t)
-\underline{u}(x,t)]dy-f(\underline{u})\\
=&(c-2\epsilon\|f'\|\delta^{-1}e^{-\omega(t-t_0)})\phi'+\epsilon\omega e^{-\omega(t-t_0)}-\int_{\mathbb{R}^N}J(x-y)[\phi(\xi_-(y,t))
-\phi(\xi_-(x,t))]dy\\
&-f(\phi(\xi_-(x,t))-\epsilon e^{-\omega(t-t_0)})\\
=&-2\epsilon\|f'\|\delta^{-1}e^{-\omega(t-t_0)}\phi'+\epsilon\omega e^{-\omega(t-t_0)}+f(\phi(\xi_-(x,t)))
-f(\phi(\xi_-(x,t))-\epsilon e^{-\omega(t-t_0)}).\\
\end{align*}
When $\xi_-(x,t)\in[-A, A]$, then $\phi'(\xi_-(x,t))\geq\delta$. Therefore,
\[\mathcal{M}\underline{u}\leq\epsilon e^{-\omega(t-t_0)}(-2\|f'\|+\omega+\|f'\|)\leq0.\]
For $|\xi_-(x,t)|\geq A$, we have
\[\phi(\xi_-(x,t)),\underline{u}(x,t)\in[-\infty,\eta]\cup[1-\eta,+\infty].\]
 Then $f'(s)\leq-\omega$ for $s\in[\phi(\xi_-(xt))-\epsilon e^{-\omega(t-t_0)},\phi(\xi_-(xt))]$. Hence,
\[\mathcal{M}\underline{u}\leq\epsilon\omega e^{-\omega(t-t_0)}-\omega\epsilon e^{-\omega(t-t_0)}=0. \]
For $t_0\leq-T$, one get
\[\underline{u}(x,t_0)=\phi(x_1+ct)-\epsilon\leq u(x,t_0).\]
Until now, we have show the function $\underline{u}$ is a sub-solution to \eqref{4-2}. Similarly one can show $\overline{u}$ is a super-solution to \eqref{4-2}.

\section*{Acknowledgments}
\noindent

The second author was partially supported by NSF of China (11731005, 11671180) and the third author was partially supported by NSF of China (11601205).


\begin{thebibliography}{99}
\bibitem{F2010} F. Andreu-Vaillo, J.M. Maz$\acute{o}$n, J.D. Rossi, J.
Toledo-Melero, Nonlocal Diffusion Problems, Mathematical Surveys and
Monographs, AMS, Providence, Rhode Island, (2010).


\bibitem{PB1997}P.W. Bates, P.C. Fife, X.F. Ren, X.F. Wang, Traveling waves in
a convolution model for phase transitions, \emph{Arch. Rational Mech. Anal., } \textbf{138} (1997) 105-136.

\bibitem{BH2007}H. Berestycki, F. Hamel, Generalized travelling waves for reaction-diffusion equations,  Perspectives in nonlinear partial differential equations, \emph{ Amer. Math. Soc, Contemp. Math., }\textbf{446} (2007) 101-123.

\bibitem{BHM} H. Berestycki, F. Hamel, H. Matano, Bistable traveling waves around an obstacle, \emph{Comm. Pure Appl. Math., } \textbf{62} (2009) 729-788.

\bibitem{BH2012}H. Berestycki, F. Hamel, Generalized transition waves and their properties, \emph{Comm. Pure Appl. Math., }\textbf{65} (2012) 592-648.

\bibitem{B} J. Bouhours, Robustness for a Liouville type theorem in exterior domains, \emph{J. Dynam. Differential Equations,} \textbf{27} (2015) 297-306.


\bibitem{BCHV} J. Brasseur, J. Coville, F. Hamel, E. Valdinoci, Liouville type results for a nonlocal obstacle problem, \emph{Proc. London Math. Soc., } \textbf{1} (2019) 1-38.

\bibitem{BC} J. Brasseur, J. Coville, A counterexample to the Liouville property of some nonlocal problems, \emph{arXiv:1804.07485v1}, 2018.

\bibitem{Bu2018}Z.H. Bu, H. Guo, Z.C. Wang, Transition fronts of combustion reaction diffusion equations in $\mathbb{R}^N$, \emph{J. Dynam. Differential Equations, } (2018).


\bibitem{CC2004}J. Carr, A. Chmaj, Uniqueness of travelling waves for nonlocal monostable equations, \emph{Proc. Amer. Math. Soc., }
    \textbf{132} (2004) 2433-2439.

\bibitem{CP2005}
 C. Carrillo, P. Fife, Spatial effects in discrete generation population models, \emph{J. Math. Biol.,} \textbf{50} (2005) 161-188.

\bibitem{c2012}C. Cort$\acute{a}$zar, M. Elgueta, F. Quir$\acute{o}$s, N. Wolanski, Asymptotic behavior for a nonlocal diffusion equation in domains with holes, \emph{Arch. Rational Mech. Anal., }\textbf{205} (2012) 673-697.

\bibitem{c20161}C. Cort$\acute{a}$zar, M. Elgueta, F. Quir$\acute{o}$s, N. Wolanski, Asymptotic behavior for a one-dimensional nonlocal diffusion equation in exterior domains, \emph{SIAM. J. Math. Anal., }\textbf{48} (2016) 1549-1574.

\bibitem{c20162}C. Cort$\acute{a}$zar, M. Elgueta, F. Quir$\acute{o}$s, N. Wolanski, Asymptotic behavior for a nonlocal diffusion equation in exterior domains :The critical two-dimensional case, \emph{J. Math. Anal. Appl., }\textbf{436} (2016) 586-610.


\bibitem{Cha2005}G. Chapuisat, E. Grenier, Existence and nonexistence of traveling wave
solutions for a bistable reaction-diffusion equation in an infinite cylinder whose diameter is suddenly increased, \emph{Comm. Partial Differential Equations, }\textbf{30} (2005) 1805-1816.

 \bibitem{CCR2006}
 E. Chasseigne, M. Chavesb, J.D. Rossi, Asymptotic behavior for nonlocal diffusion equations, \emph{J. Math Pures Appl., }\textbf{86} (2006) 271-291.

\bibitem{CXF1997}F. Chen, Existence, uniqueness and asymptotic stablility of traveling waves in nonlocal evolution equations, \emph{Adv. Differential Equations, }\textbf{2} (1997)  125-160.

\bibitem{Co2007}J. Coville, L. Dupaigne, On a nonlocal reaction diffusion equation arising in population dynamics, \emph{Proc. Roy. Soc. Edinburgh Sect., }\textbf{137} (2007) 727-755.

 \bibitem{CDM2008}J. Coville, J. D\'{a}vila, S. Mart\'{\i}nez, Nonlocal anisotropic dispersal with monostable nonlinearity, \emph{J. Differential Equations,}  \textbf{244}  (2008) 3080-3118.

\bibitem{CJ}J. Coville, Travelling fronts solution in asymmetiric nonlocal reaction diffusion equation: the bistable and ignition case, \emph{Preprint of CMM, } (2007).

\bibitem{Guo2005}
J.S. Guo, Y. Morita, Entire solutions of reaction-diffusion equations and an application to discrete diffusive equations, \emph{Discrete Contin. Dyn. Syst.,} \textbf{12} (2005) 193-212.


\bibitem{guo20181}H. Guo, F. Hamel, W.J. Sheng, On the mean speed of bistable transition fronts in
unbounded domains, (2018) $<$Hal-01855979$>$

\bibitem{Hamel1999}
F. Hamel, N. Nadirashvili, Entire solution of the KPP eqution,
\emph{Comm. Pure Appl. Math.,} \textbf{52} (1999) 1255-1276.

\bibitem{Hamel2001}
F. Hamel, N. Nadirashvili, Travelling fronts and entire solutions of the Fisher-KPP equation in $R^{N}$,
\emph{Arch. Rational Mech. Anal., } \textbf{157} (2001) 91-163.

\bibitem{Hamel20161}F. Hamel, L. Rossi, Transition fronts for the Fisher-KPP equation, \emph{Trans. Amer. Math. Soc., }\textbf{368} (2016) 8675-8713.

\bibitem{Hamel20162}F. Hamel, Bistable transition fronts in $\mathbb{R}^N$, \emph{ Adv. Math., }\textbf{289} (2016) 279-344.

\bibitem{HHV} A. Hoffman, H.J. Hupkes, E.S. Van Vleck, Entire solutions for bistable lattice differential equations with obstacles, \emph{Mem. Amer. Math. Soc., } \textbf{250} (2017).

\bibitem{KPP} A.N. Kolmogorov, I.G. Petrovsky, N.S. Piskunov,  $\acute{E}$tude de l$'$$\acute{e}$quation de la diffusion avec
croissance de la quantit$\acute{e}$ de la mati$\grave{e}$re et son application $\grave{a}$ un probl$\grave{e}$me biologique. \emph{Bull. Univ.
Etat Mosc. S¨¦r. Int. A,} \textbf{1} (1937) 1-26.

\bibitem{L2010}W.T. Li, Y.J. Sun, Z.C. Wang, Entire solutions in the Fisher-KPP equation with nonlocal dispersal, \emph{Nonlinear Anal. RWA, }\textbf{11} (2010) 2302-2313.

\bibitem{HM2002}H. Matano, Various conference talks. (2002).

\bibitem{Mellet2009}A. Mellet, J. Nolen, J.M. Roquejoffre, L. Ryzhik, Stability of generalized transition fronts, \emph{Comm. Partial Differential Equations}, \textbf{34} (2009) 521-552.

\bibitem{Nolen2012}J. Nolen, J.M. Roquejoffre, L. Ryzhik, A. Zlato$\check{s}$, Existence and non-existence of Fisher-KPP transition fronts, \emph{Arch. Ration. Mech. Anal., }\textbf{203} (2012) 217-246.

\bibitem{Pan2010}S. Pan, W.T. Li, G. Lin, Existence and stability of traveling wavefronts in a nonlocal diffusion equation with delay,
\emph{Nonlinear Anal. TMA., } \textbf{72} (2010) 3150-3158.

\bibitem{Shen2004}W. Shen,  Traveling waves in diffusive random media, \emph{J. Dynam. Differential Equations, } \textbf{16} (2004) 1011-1060.

\bibitem{Shen2011}W. Shen, Existence of generalized traveling waves in time recurrent and space periodic monostable equations, \emph{ J. Appl. Anal. Comput., }\textbf{1} (2011) 69-93.

\bibitem{Shen20173}W. Shen, Z. Shen, Transition fronts in time heterogeneous and random media of ignition type, \emph{ J. Differential Equations, }\textbf{262} (2017) 454-485.

\bibitem{Shen20174}W. Shen, Stability of transition waves and positive entire solutions of Fisher-KPP equations with time and space dependence, \emph{ Nonlinearity, }\textbf{30} (2017) 3466-3491.

\bibitem{Sheng2018}W.J. Sheng, H. Guo, Transition fronts of time periodic bistable reaction-diffusion equations in $\mathbb{R}^N$, \emph{J. Differential Equations, }\textbf{265} (2018) 2191-2242.

\bibitem{Sun2011}Y.J. Sun, W.T. Li, Z.C. Wang, Entire solutions for nonlocal dispersal equations with bistable nonlineartiy, \emph{J. Differential Equations, }\textbf{251} (2011) 551-581.

\bibitem{SZLW2019}Y.J. Sun, L. Zhang, W.T. Li, Z.C. Wang, Entire solutions in nonlocal monostable equations: asymmetric case, \emph{Commun. Pure Appl. Anal.}, \textbf{18} (2019) 1049-1072.

\bibitem{Wang2006}Z.C. Wang, W.T. Li, S. Ruan, Travelling wave fronts in reaction-diffusion systems with spatio-temporal delays, \emph{ J. Differential Equations, }\textbf{222} (2006) 185-232.

\bibitem{Wen1982}H.F. Weinberger, Long-time behavior of a class of biological models, \emph{SIAM J. Math. Anal.,} \textbf{13} (1982) 353-396.

\bibitem{YH20091}H. Yagisita, Existence and nonexistence of traveling waves for a nonlocal monostable equation, \emph{Publ. Res. Inst. Math. Sci., }\textbf{45} (2009) 925-953.

\bibitem{YH2009}H. Yagisita, Existence of traveling wave solutions for a nonlocal bistable equation:an abstract approach, \emph{Publ. Res. Inst. Math. Sci.,} \textbf{45} (2009) 955-979.


\bibitem{ZLL2009}G.B. Zhang, W.T. Li, G. Lin, Traveling waves in delayed predator-prey systems with nonlocal diffusion and stage structure,
\emph{Math. Comput. Modelling, } \textbf{49} (2009), 1021-1029.

\bibitem{ZLW2017}L. Zhang, W.T. Li, Z.C. Wang, Entire solution in an ignition nonlocal dispersal equation: asymmetric kernel, \emph{Sci. China Math.}, \textbf{60}  (2017)  1791-1804.

\bibitem{ZLWS2019}L. Zhang, W.T. Li, Z.C. Wang, Y.J. Sun, Entire
solutions in nonlocal bistable equations: asymmetric case, \emph{Acta Math. Sin.}, in press.

\bibitem{Zlatos2017}A. Zlato$\check{s}$, Existence and non-existence of transition fronts for bistable and ignition reactions, \emph{Ann. Inst. H. Poincar\'{e} Anal. NonLin\'{e}aire,} \textbf{34} (2017) 1687-1705.

\bibitem{Zlatos2013}A. Zlato$\check{s}$, Generalized traveling waves in disordered media: existence, uniqueness, and stability, \emph{ Arch. Ration. Mech. Anal., }\textbf{208} (2013) 447-480.

\bibitem{Zlatos2012}A. Zlato$\check{s}$, Transition fronts in inhomogeneous Fisher-KPP reaction-diffusion equations, \emph{J. Math. Pures Appl., }\textbf{98} (2012) 89-102.


\end{thebibliography}
\end{document}